\documentclass[ envcountsect, envcountsame]{svjour3}
\usepackage[utf8x]{inputenc}
\usepackage[english]{babel}
\usepackage{amsmath}
\usepackage{amssymb}
\usepackage{xcolor}
\usepackage[colorlinks=false,linkbordercolor=red,citebordercolor=blue]{hyperref}
\usepackage{tabularx}
\usepackage{comment}
\usepackage{graphicx}
\usepackage{epsfig}
\usepackage{caption}
\usepackage{subcaption}
\usepackage{ntheorem}
\usepackage{enumitem}

\usepackage{pgfplots}
\usepackage{tikz}
\usetikzlibrary{shapes, arrows}
\tikzstyle{block} = [draw, fill=white, rectangle, minimum height=0em, minimum width=0em]
\tikzstyle{output} = [coordinate]
\tikzstyle{input} = [coordinate]

\smartqed
\sloppy

%%%%%%%%%%% THEOREMSTYLE %%%%%%%%%%%%%%%
%\theoremstyle{plain}
\newtheorem{Theorem}{Theorem}[section]

\newtheorem{Definition}[Theorem]{Definition}

\newtheorem{Corollary}[Theorem]{Corollary}

\theorembodyfont{\normalfont}
\newtheorem{Example}[Theorem]{Example}
\newtheorem{Remark}[Theorem]{Remark}

\newcommand{\setdef}[2]{\left\{\, #1\, \left|\, \vphantom{#1} #2 \right.\right\}}

\newcommand{\ddt}{\tfrac{\text{\normalfont d}}{\text{\normalfont d}t}}
\newcommand{\dt}{\text{\normalfont d}t}
\newcommand{\ds}{\text{\normalfont d}s}
\newcommand{\R}{\mathbb{R}}
\newcommand{\C}{\mathbb{C}}
\newcommand{\N}{\mathbb{N}}
\newcommand{\cB}{\mathcal{B}}
\newcommand{\cC}{\mathcal{C}}
\newcommand{\cD}{\mathcal{D}}
\newcommand{\cF}{\mathcal{F}}
\newcommand{\cL}{\mathcal{L}}
\newcommand{\cN}{\mathcal{N}}

\newcommand{\cP}{\mathcal{P}}
\newcommand{\cT}{\mathcal{T}}

\newcommand{\cW}{\mathcal{W}}

\newcommand{\vp}{\varphi}
\newcommand{\ve}{\varepsilon}
\newcommand{\rp}{\mathbb{R}_{\geq 0}}

%          environment name
{\left(\begin{smallmatrix}}%            begin code
{\end{smallmatrix}\right)}%             end code

\newenvironment{smallbmatrix}%          environment name
{\left[\begin{smallmatrix}}%            begin code
{\end{smallmatrix}\right]}%             end code

\DeclareMathOperator{\im}{im}

\DeclareMathOperator{\Gl}{\mathbf{Gl}}
\DeclareMathOperator{\rk}{\rm rk}

\DeclareMathOperator{\esssup}{\rm ess\,sup}

\def\red#1{\textcolor[rgb]{0.60,0.10,0.07}{#1}}
\def\blue#1{\textcolor[rgb]{0.00,0.20,0.70}{#1}}
\def\green#1{\textcolor[rgb]{0.00,0.50,0.50}{#1}}

\bibliographystyle{spmpsci}

\begin{document}
\title{ Output feedback control with prescribed performance via funnel pre-compensator
\thanks{This work was supported by the German Research Foundation
(Deutsche Forschungsgemeinschaft) via the grant BE 6263/1-1.}
}

\author{Lukas Lanza}
\institute{Lukas Lanza \at
%              Warburger Str. 100 \\
%              33098 Paderborn \\
%              Tel.: +49 5251 60-5015 \\
              \email{lanza@math.upb.de}           %  \\
}

\date{Received: date / Accepted: date}
% The correct dates will be entered by the editor

\maketitle

\begin{abstract}
We study output reference tracking of systems with high relative degree via output feedback only; this is, tracking where the output derivatives are unknown.
To this end, we prove that the conjunction of the funnel pre-compensator with a minimum phase system of arbitrary relative degree yields a system of the same relative degree which is minimum phase as well. 
The error between the original system's output and the pre-compensator's output evolves within a prescribed performance funnel; and moreover, the derivatives of the funnel pre-compensator's output are known explicitly. 
Therefore, output reference tracking with prescribed transient behaviour of the tracking error is possible without knowledge of the derivatives of the original system's output; via funnel control schemes for instance.
\keywords{Funnel pre-compensator \and output feedback \and funnel control \and output reference tracking \and nonlinear systems}
\end{abstract}

\section*{Nomenclature}
Throughout the present article we use the following notation, where $I \subseteq \R$ denotes an interval
\ \\
\begin{tabularx}{\textwidth}{rX}
$\N$& the set of positive integers, \\
$\rp$, $\C_-$ & the sets $:= [0, \infty)$, $\setdef{ \mu \in \C}{ \text{Re}(\mu) < 0}$, respectively, \\
$A \in \R^{n \times m}$ & the matrix~$A$ is in the set of real $n \times m$ matrices, \\
$\Gl_n(\R)$ & the group of invertible matrices in~$\R^{n \times n}$,  \\
$A > 0$ &$:= \iff x^\top A x > 0$ for all $x \in \R^n \setminus\{0\}$, the matrix~$A \in \R^{n \times n}$ is positive definite \\
$\sigma(A)$ & $:= \{\lambda \in \C \ \vline \ \det(A-\lambda I_n)=0 \}$ spectrum of the matrix $A \in \R^{n \times n}$, \\
$\lambda_{\rm max}(A), \lambda_{\rm min}(A) $ & the largest and the smallest eigenvalue of a matrix $A \in \R^{n \times n}$ with~$\sigma(A) \subseteq \R$, respectively, \\
$\| x \| $ & $:=\sqrt{x^\top x} $ Euclidean norm of  $x \in \R^n$,  \\
$\| A \|$ & $:= \max_{\|x\|=1} \|Ax\|$ spectral norm of $A \in \R^{m \times n}$, \\
$\cL_{\rm loc}^\infty (I \to \R^p)$ & set of locally essentially bounded functions $ f: I \to \R^p$ ,  \\
$\cL^\infty (I \to \R^p)$ & set of essentially bounded functions $ f: I \to \R^p$ ,  \\
$\| f \|_{\infty} $ & $:= \esssup_{t \in I} \|f(t)\| $ norm of $f \in \cL^\infty(I \to \R^p)$ , \\
$\cW^{k,\infty}(I \to \R^p)$ & set of $k$-times weakly differentiable functions\\& $ f : I \to \R^p$ such that $f,\ldots,f^{(k)} \in \cL^\infty(I \to \R^p)$, \\
$\cC^k( I \to \R^p) $ & set of $k$-times continuously differentiable functions \\& $f : I \to \R^p$, $\cC(I\to \R^p) = \cC^0(I\to \R^p)$, \\
$f|_{J}$ & the restriction of $f : I \to \R^n$ to $J \subseteq I$, $I$ an interval, \\
a. a. & almost all.
\end{tabularx}

\ \\
For later use, we recall the Kronecker product of two matrices~$L \in \R^{l \times m}$ and $K = \left( k_{ij} \right)_{i=1,\ldots,k;j=1,\ldots,n } \in \R^{k \times n}$ 
\begin{equation*}
K \otimes L := \begin{bmatrix} k_{11} L & \cdots & k_{1n} L \\ \vdots & \ddots & \vdots \\ k_{k1} L & \cdots & k_{kn} L\end{bmatrix} \in \R^{kl \times nm}.
\end{equation*}

\section{Introduction}
In the present article we elaborate on the so called \textit{funnel pre-compensator}, first proposed in~\cite{BergReis18b}.
The funnel pre-compensator is a simple adaptive dynamical system of high-gain type which receives signals from a certain class of signals specified later, and has an output which approximates the input signal in the sense that the error between the input signal and the pre-compensator's output evolves within a prescribed performance funnel; 
moreover, the derivatives of the pre-compensator's output are known explicitly.
Comparing the preprint~\cite{BergReis16app} and the work~\cite{BergReis18b} it is clear that the funnel pre-compensator was inspired by the concept of high-gain observers (mainly inspired by the adaptive high-gain observer proposed in~\cite{BullIlch98}); for detailed literature on high-gain observers see~\cite{EsfaKhal87,KhalSabe87,SabeSann90,Torn88} and the survey~\cite{KhalPral14} as well as the references therein, respectively.
For a discussion and detailed comparison of some properties of high-gain observers and the funnel pre-compensator see~\cite{BergReis18b}.

\ \\ 
Although there is plenty of properly working high-gain based feedback controller with prescribed error performance, for \textit{funnel control} schemes see e.g.~\cite{IlchRyan02b,BergLe18a},
the recent work~\cite{BergIlch21} or the construction of a bang-bang funnel controller cf.~\cite{LibeTren13b}, and for \textit{prescribed performance controller} see~\cite{BechRovi09,BechRovi14},
 all suffer from the problem that the output signal's derivatives (funnel control) or the full state (prescribed performance controller) have to be available to the control scheme. 
For funnel control this means, if the output's derivatives are not available from measurement, the output measurement has to be differentiated which is an ill-posed problem, see e.g.~\cite[Sec. 1.4.4]{Hack12}.
Prominent ideas in the literature to handle this topic are so called \textit{backstepping} procedures, see e.g.~\cite{IlchRyan06b,IlchRyan07} in conjunction with an input filter. 
However, the backstepping procedure typically involves high powers of a ``large-valued" gain function, which causes numerical issues and leads to impractical performances, see~\cite[Sec. 4.4.3]{Hack12}.
Another approach to solve an \textit{arbitrary good transient and steady-state response problem} for linear minimum phase systems with arbitrary relative degree is presented in~\cite{MillDavi91}.
The proposed controller involves an internal compensator scheme of LTI type which allows to achieve an arbitrary small error within an arbitrary short time  receiving the systems's output and the reference signal only. 
Although this control scheme has a number of advantageous features such as noise tolerance and applicability to systems with unknown relative degree to name but two (see also the survey~\cite{Ilch91}), it is an adaptive scheme with a monotonically non-decreasing gain and involves a (piecewise constant) switching function where the switching times are determined in a two phase scheme of rather high complexity.
In the works~\cite{ChowKhal19,DimaBech20} approaches to realize output tracking with prescribed error behaviour via output feedback only are presented.
In~\cite{ChowKhal19} single-input single-output systems of known arbitrary relative degree with bounded input bounded state stable internal dynamics are under consideration. 
The control scheme involves higher derivatives of the output which are approximated via a high-gain observer.
With this, tracking via output feedback can be realized. However, in this setting knowledge of the control coefficient is required and hence the particular control scheme is - in contrast to standard funnel control schemes - not model free. 
In~\cite{DimaBech20} an extension of the prescribed performance controller~\cite{BechRovi14} is used to achieve output tracking with prescribed error performance of unknown nonlinear multi-input multi-output systems with known vector relative degree.
A high-gain observer scheme is used to make the required derivatives available. 
Since the control schemes in~\cite{ChowKhal19,DimaBech20} involve high-gain observers both suffer from the problem of proper initializing, i.e., the high-gain parameters are to be predetermined appropriately; however, it is not clear how to choose these parameters appropriately in advance.
%This is in contrast with the standard funnel control schemes as e.g. in~\cite{BergLe18a,BergIlch21}.
%
In~\cite{LiuSu21} an output feedback funnel control scheme is developed which achieves output tracking with prescribed transient behaviour for a class of nonlinear single-input single-output systems where the nonlinearity is a function of time and the output variable only. In particular, the problem of choosing parameters appropriately in advance is circumvented.

\ \\
As mentioned above the derivatives of the pre-compensator's output are known explicitly, and hence the aforesaid gives rise to the idea that the funnel pre-compensator scheme proposed in~\cite{BergReis18b} can help resolving the long-standing problem of adaptive feedback control with prescribed error performance of nonlinear systems with relative degree higher than one with unknown output derivatives.
In order to resolve this problem, in the present article we prove that the application of a cascade of funnel pre-compensators to a minimum phase system of arbitrary relative degree yields a system of the same relative degree, which is minimum phase as well. 
In particular, the derivatives of the pre-compensator's output are known explicitly.
Therefore, output reference tracking with prescribed transient behaviour using well known funnel control schemes for systems of arbitrary (possibly high) relative degree, as for instance from~\cite{BergLe18a} or the recent work~\cite{BergIlch21}, is possible without knowledge of the system's output derivatives. 
In particular, the tracking error between the original system's output and the desired reference trajectory evolves within a prescribed performance funnel.
For systems of relative degree two this was shown in~\cite{BergReis18b} and this result was used for funnel control in~\cite{BergReis18a}, but for arbitrary relative degree~$r\in\N$ this remained an open problem which we solve in the present paper.

\ \\
Before we recall and investigate the funnel pre-compensator introduced in~\cite{BergReis18b} we highlight that, contrary to most approaches, the funnel pre-compensator does not necessarily receive signals~$u$ and~$y$ which are input and output of a dynamical system or a corresponding plant, 
but, defining $\cL \cW_{m}^{r,\infty} := \cL_{\rm loc}^\infty(\rp \to \R^m) \times \cW_{\rm loc}^{r,\infty}(\rp \to \R^{m}) $, we consider signals $u$ and $y$ belonging to the large set
\begin{equation*}
\cP_r := 
\left\{ 
(u,y) \in \cL \cW_{m}^{r,\infty}  \ \vline \ 
\begin{array}{l}
\exists\, \Gamma \in \cC^1(\rp \to \R^{m \times m}): \\ 
\Gamma y^{(r-1)} \in \cL^\infty(\rp \to \R^m), \\
\ddt(\Gamma y^{(r-1)}) - u \in \cL^\infty(\rp \to \R^m)
\end{array}
\right\}.
\end{equation*}
We emphasize that it is not assumed to know the matrix valued function~$\Gamma$. 
Only knowledge of the signals~$u$ and~$y$ and the number~$ r \in \N$ is assumed.
It is self evident that the signals~$u$ and~$y$ can be input and output of a corresponding plant, respectively; for an example see~\cite{BergReis18b};
however, the signal set~$\cP_r$ allows for a much larger class of dynamical systems, cf.~\cite{BergLe18a} and the works~\cite{BergIlch14,IlchRyan02b}.

\ \\
The present article is organized as follows.
In Section~\ref{Sec:The-funnel-pre-compensator} we recall the concept of the funnel pre-compensator first introduced in~\cite{BergReis18b}, and recapitulate the respective results we will work with.
Section~\ref{Sec:main-result-FPC-min-phase} contains the main result of the present article. 
After introducing the system class under consideration in subsection~\ref{Sec:System-class} and establishing the set of feasible design parameters of the pre-compensator in subsection~\ref{Sec:Parameter}, 
in subsection~\ref{Sec:FPC} we state that the application of a cascade of funnel pre-compensators to a minimum phase system with arbitrary relative degree~$r \in \N$ leads to a system of same relative degree which is minimum phase as well, and moreover, the first~$r-1$ derivatives of the pre-compensator's output are known explicitly; this is, in subsection~\ref{Sec:FPC} we  present the extension of~\cite[Thm. 2]{BergReis18b} to arbitrary relative degree.
The proof of this result is relegated to the Appendix.
In Section~\ref{Sec:Tracking} we turn towards an application of the funnel pre-compensator, namely output tracking via output feedback only.
We show that with the aid of the funnel pre-compensator output tracking with prescribed transient behaviour of the tracking error with unknown output derivatives is possible via funnel control techniques.
In Section~\ref{Sec:Simulations} we provide numerical simulations illustrating the findings from Section~\ref{Sec:Tracking}.

\section{The funnel pre-compensator} \label{Sec:The-funnel-pre-compensator}
In order to incorporate the main result %in Section~\ref{Sec:FPC}
of the present article into the context of the funnel pre-compensator proposed in~\cite{BergReis18b}, 
we briefly recall the respective results.
The funnel pre-compensator is a pre-compensator of high-gain type in the spirit of funnel control; for details concerning funnel control see the 
works~\cite{IlchRyan02b,IlchRyan08,BergLe18a}, the recent work~\cite{BergIlch21} and the references therein, respectively.
The funnel pre-compensator~\eqref{eq:FPC} is a dynamical system receiving signals ${(u,y) \in \cP_r}$, for some~$r \in \N$, and giving~$z$ as an output, the first derivative of the latter is known exactly. 
\begin{figure}[h!]
\begin{subfigure}[t]{0.59\textwidth}
%\begin{center}
\begin{tikzpicture} [auto, node distance=2cm,>=latex']
\def\hoch{0.8cm};
\def\breit{0cm};
\def\distu{1.2cm};
\def\dista{1.2cm};
\node [block, minimum width = \breit, minimum height = \hoch,] (Signal) { $(u,y) \in \cP_r$ };
\node [block, minimum width = \breit, minimum height = \hoch, right of=Signal, node distance=3.2cm] (FP1) {Funnel Pre-Compensator};
\node [output, right of=FP1, node distance=3.05cm] (z) {  };
\draw[->] (FP1) -- node[name=z]{$z(t), \ \dot z(t)$} (z) ;
\node [output, above of=Signal, node distance = \dista] (y) {};
\node [input, above of=FP1, node distance = \dista] (a-FP1) {};
\node [output, below of=Signal, node distance = \distu] (u) {};
\node [input, below of=FP1, node distance = \distu, scale=0.3] (u-FP1) {};
\draw[-] (Signal) --  (y) ;
\draw[-] (y) -- node[name=u_FP1] {$\!\!\!\!\!\!\!\!\!\!\!\!\!\!\!\!\!\! y(t)$} (a-FP1);
\draw[->] (a-FP1) -- (FP1);
\draw[-] (Signal) --  (u) ;
\draw[-] (u) -- node[below, name=u_FP1] {$\!\!\!\!\!\!\!\!\!\!\!\!\!\!\!\!\!\! u(t)$} (u-FP1);
\draw[->] (u-FP1) -- (FP1);
\end{tikzpicture} 
\caption{Schematic funnel pre-compensator.}
\end{subfigure}
\hfill
\begin{subfigure}[t]{0.39\textwidth}
\includegraphics[width=\textwidth]{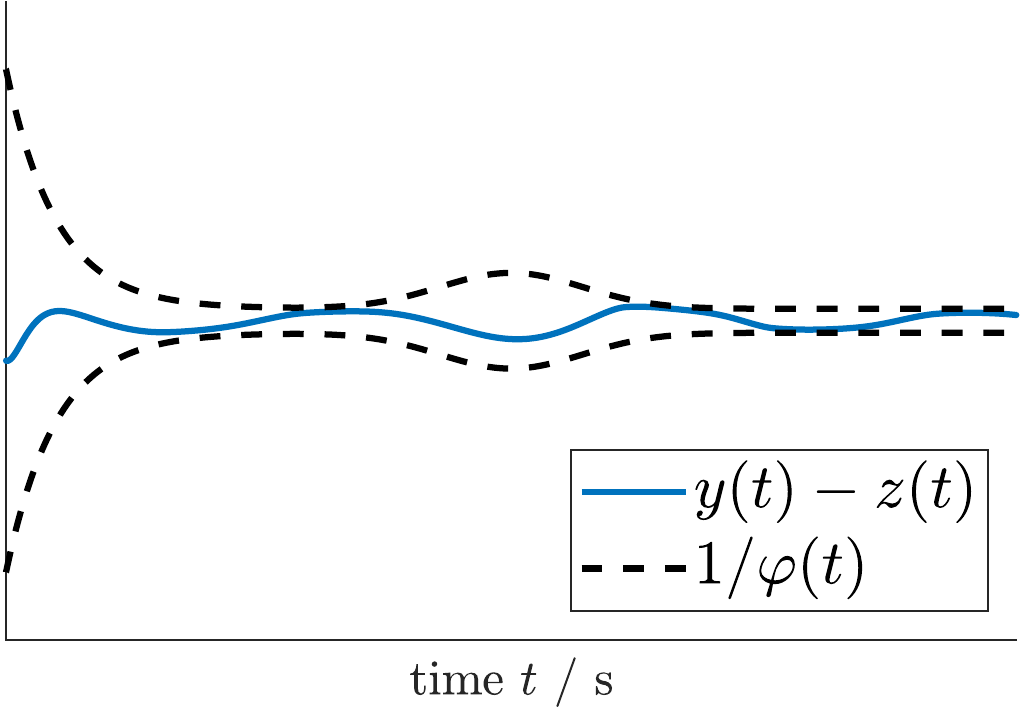}
\caption{Error~$y - z$ and funnel boundary~$1/\vp$.}
\label{Fig:FPC-Error_FIG}
\end{subfigure}
\caption{Schematic structure of an application of the funnel pre-compensator~\eqref{eq:FPC} to signals~$(u,y) \in \cP_r$. The figure is based on the respective figures in~\cite{BergReis18b}. }
 \label{Fig:FPC}
\end{figure}
The error between the signals~$y$ and~$z$, namely~$e := y-z$, evolves within a prescribed performance funnel
\begin{equation*}
\cF_\vp := \setdef{ (t,e) \in \rp \times \R^{m} }{ \vp(t) \|e(t)\| < 1}.
\end{equation*}
The situation is depicted in Figure~\ref{Fig:FPC}.
The shape of the performance funnel is determined by the funnel functions, which belong to the following set
\begin{equation*}
\Phi_r :=
\left\{
\vp \in \cC^r(\rp \to \R) \ \vline \ \begin{array}{l}
\vp, \dot \vp,\ldots,\vp^{(r)} \text{ are bounded}, \\ 
\vp(s) > 0 \text{ for all } s > 0, \\
\text{and } \liminf_{s \to \infty} \vp(s) > 0
\end{array}
\right\}.
\end{equation*}
Note that the boundary of the performance funnel is given by the reciprocal of the funnel functions, namely by~$1/\vp$. 
We highlight two important properties of the funnel functions~$\vp \in \Phi_r$. 
First, we allow~$\vp(0) = 0$ which means that the boundary has a pole at~$t=0$. This will be important in the context of funnel control, where initial conditions of the form~$\vp(0) \|e(0)\|<1$ occur, which are satisfied trivially for~$\vp(0)=0$.
% and hence there is no restriction on the initial value since~$\vp(0) \|e(0)\|<1$ is satisfied trivially.
Second, we do not require monotonically increasing funnel functions, see Figure~\ref{Fig:FPC-Error_FIG}. 
Although in most situations one will choose the funnel functions in such a manner that the funnel boundary is monotonically decreasing, there may occur situations where widening the funnel boundary over some time interval is beneficial, e.g., if the signal~$y$ is changing strongly or in the presence of (periodic) disturbances. 

\ \\
We recall the funnel pre-compensator $FP : \cP_r \to \cP_{r} $ proposed in~\cite{BergReis18b}, defined for $(u,\xi) \in \cP_{r}$ and $\vp \in \Phi_1$ via
\begin{equation*}
\begin{aligned}
% FP : \cP_r &\to \cW^{r-1,\infty}(\rp \to \R^{m}) , \\
FP({a,p},\tilde \Gamma,\vp) : (u,\xi) &\mapsto (u,\zeta_1),
\end{aligned}
\end{equation*}
where
\begin{equation} \label{eq:FPC}
\begin{aligned}
\dot \zeta_1(t) & = \big(a_{1} + p_1 h(t) \big)\big( \xi(t) - \zeta_1(t) \big) + \zeta_{2}(t) ,  &\zeta_1(0) &= \zeta_1^0 \in \R^m, \\
\dot \zeta_{2}(t) & = \big(a_{2} + p_{2} h(t) \big)\big( \xi(t) - \zeta_1(t) \big) + \zeta_{3}(t) ,   &\zeta_{2}(0) &= \zeta_{2}^0 \in \R^m, \\
& \ \, \vdots & &\\
\dot \zeta_{r-1}(t) &= \big(a_{r-1} + p_{r-1} h(t) \big)\big(\xi(t) - \zeta_1(t) \big) +  \zeta_{r}(t),  &\zeta_{r-1}(0) &= \zeta_{r-1}^0 \in \R^m,  \\
\dot \zeta_{r}(t) &= \big(a_{r} \quad + p_{r} \quad h(t) \big)\big(\xi(t) - \zeta_1(t) \big) +  \tilde \Gamma u(t),  &\zeta_{r}(0) &= \zeta_{r}^0 \in \R^m, \\
\ \\
h(t) &= \frac{1}{1 - \vp(t)^2 \| \xi(t) - \zeta_1(t) \|^2} ,
\end{aligned}
\end{equation}
and $\tilde \Gamma  \in \R^{m \times m}$, {$a := (a_1,\ldots,a_r)$, $p := (p_1,\ldots,p_r)$} and~$\vp$ are design parameters to be determined later in Section~\ref{Sec:Parameter} 

\ \\
At this stage we bring back to mind the result~\cite[Prop.~1]{BergReis18b} concerning the feasibility of the funnel pre-compensator.
It guarantees transient behaviour of the error between the signal~$y$ and the pre-compensator state~$z_1$; and the derivative~$\dot z_1$ is known exactly.
However, the higher derivatives of the pre-compensator's output, namely~$\ddot z,\ldots, z^{(r-1)}$ which explicitly depend on~$\dot y, \ldots, y^{(r-1)}$, do not approximate the higher derivatives of~$y$ in the sense that (since~$\dot y,\ldots, y^{(r-1)}$ are unknown) transient behaviour of the errors~$e_i := y^{(i-1)} - z_i$, $i=2,\ldots,r$ cannot be achieved. 
This motivates a successive application of the funnel pre-compensator, resulting in a \textit{cascade of funnel pre-compensators} as proposed in~\cite{BergReis18b}.
This means, we apply funnel pre-compensators in a row to the preceding system, which is already a funnel pre-compensator,
i.e., for~$i \in \N$ we have $FP : \cP_r \to \cP_{r}, (u,z_{i-1,1}) \mapsto (u,z_{i,1})$, the situation is depicted in Figure~\ref{Fig:Cascade}.
\begin{figure}[h!]
\begin{center}
\begin{tikzpicture}[auto, node distance=2cm,>=latex']
\def\hoch{0.8cm};
\def\breit{0cm};
\def\distu{0.8cm};
\node [block, minimum width = \breit, minimum height = \hoch,] (Signal) { $(u,y) \in \cP_r$ };
\node [block, minimum width = \breit, minimum height = \hoch, right of=Signal, node distance=2.5cm] (FP1) { $FP$ };
\node [block, minimum width = \breit, minimum height = \hoch, right of=FP1, node distance=2cm] (FP2) { $FP$ };
\node [ right of=FP2, node distance=2cm] (FP3) { $---$ };
\node [block, minimum width = \breit, minimum height = \hoch, right of=FP3, node distance=2.5cm] (FPr-1) { $FP$ };
\node [right of=FPr-1, node distance=2cm] (z) {  };
\draw[->] (Signal) to node[above]{$y(t)$} node[below]{$u(t)$} (FP1) ;
\draw[->] (FP1) to node[above]{$z_{1,1}(t)$} node[below]{$u(t)$} (FP2) ;
\draw[->] (FP2) to node[above]{$z_{2,1}(t)$} node[below]{$u(t)$} (FP3) ;
\draw[->] (FP3) to node[above]{$z_{r-2,1}(t)$} node[below]{$u(t)$} (FPr-1) ;
\draw[->] (FPr-1) -- node[above]{$z(t)$} node[below]{$u(t)$} (z) ;
\node [block, minimum width = \breit, minimum height = \hoch, above of=FP1, node distance=1.5*\distu] (Param_FP1) { ${a,p},\tilde \Gamma,\vp_1$ };
\node [block, minimum width = \breit, minimum height = \hoch, above of=FP2, node distance=1.5*\distu] (Param_FP1) { ${a,p},\tilde \Gamma,\vp$ };
\node [block, minimum width = \breit, minimum height = \hoch, above of=FPr-1, node distance=1.5*\distu] (Param_FP1) { ${a,p},\tilde \Gamma,\vp$ };
\node [output, above of=FP1, node distance = \distu] (vp1) {};
\node [output, above of=FP2, node distance = \distu] (vp_1) {};
\node [output, above of=FPr-1, node distance = \distu] (vp_r-1) {};
\draw[->] (vp1) -- (FP1);
\draw[->] (vp_1) -- (FP2);
\draw[->] (vp_r-1) -- (FPr-1);
\end{tikzpicture} 
\caption{Cascade of funnel pre-compensators~\eqref{eq:FPC-cascade} applied to signals~$(u,y) \in \cP_r$. The figure is based of the respective figure in~\cite{BergReis18b}. }
 \label{Fig:Cascade}
\end{center}
\end{figure}
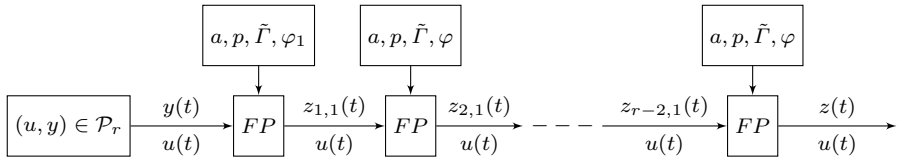
This cascade achieves an approximation~$z := z_{r-1,1}$ of the signal~$y$ with transient behaviour of the error~$y-z$, and furthermore, the higher derivatives of the funnel pre-compensator's output, namely~$\dot z,\ldots,z^{(r-1)}$ are known explicitly.
Applying the pre-compensator~$r-1$ times we obtain for~$\vp_1, \vp \in \Phi_r$
\begin{equation} \label{eq:FPC-cascade}
\begin{aligned}
FP({a,p},\tilde \Gamma,\vp) \circ \cdots \circ FP({a,p},\tilde \Gamma,\vp) \circ FP({a,p}, \tilde \Gamma,\vp_1) : \cP_r &\to \cP_{r} , \\
(u,y) & \mapsto (u, z) ,
\end{aligned}
\end{equation}
where, except of the first, all pre-compensators in the cascade have the same funnel function~$\vp$, and all have the same gain matrix~$\tilde \Gamma$;
 $\vp_1, \vp \in \Phi_r$, and ${a,p>0}$ are given by the corresponding matrices~$A,P,Q$ satisfying~\ref{Ass:A-Hurwitz}, to be introduced in Section~\ref{Sec:Parameter}.
For such a cascade of funnel pre-compensators~\cite[Thm. 1]{BergReis18b} states that it yields a system with output $z := z_{r-1,1}$ such that the error $e := y - z$ evolves within a prescribed performance funnel, and moreover, the derivatives $\dot z, \ldots, z^{(r-1)}$ are known explicitly.
An explicit representation of~$z^{(j)}$ and its dependency on the states~$z_{i,j}$ are discussed in detail in~\cite[Rem.~3]{BergReis18b}.
Figure~\ref{Fig:Dependence-on-all-states} gives a picture of the dependence of~$z^{(j)}$ on the states~$z_{i,j}$.
\begin{figure}[h!]
\begin{center}
\begin{tikzpicture}[auto, node distance=2cm,>=latex']
\def\hoch{0.0cm};
\def\breit{0cm};
\def\distu{0.65cm};
\def\dista{0.65cm};
\node [block, minimum width = \breit, minimum height = \hoch,] (zj) { $z^{(j)}$ };
\node [block, minimum width = \breit, minimum height = \hoch, right of=zj, node distance=1.8cm] (FPr-1) 
{$\begin{array}{cc}
 z_{r-1,1} & z_{r-2,1} \\
 z_{r-1,2} & \dot z_{r-2,1} \\
  \vdots & \vdots \\
  z_{r-1,j+1} & z_{r-2,1}^{(j-1)}
\end{array} $} ; 
\node [block, color=black, ,fill opacity=0.1, 
minimum width = 1.15cm, minimum height = 1.45cm, 
below of = FPr-1, node distance = 0.18cm, xshift=0.75cm] (box-FPr-1) {} ;
\node [block, minimum width = \breit, minimum height = \hoch, right of=box-FPr-1, node distance=1.92cm] (FPr-2) 
{$\begin{array}{cc}
 z_{r-2,1} & z_{r-3,1} \\
 z_{r-2,2} & \dot z_{r-3,1} \\
  \vdots & \vdots \\
  z_{r-2,j} & z_{r-3,1}^{(j-2)}
\end{array} $} ; 
\node [block, color=black, ,fill opacity=0.1, 
minimum width = 1.15cm, minimum height = 1.45cm, 
below of = FPr-2, node distance = 0.18cm, xshift=0.6cm] (box-FPr-2) {} ;
\node[right of = box-FPr-2, node distance=1.1cm] (between) {$\cdots$} ;
\node [block, minimum width = \breit, minimum height = \hoch, right of=between, node distance=1.73cm] (FPr-j) 
{$\begin{array}{cc}
 z_{r-j+1,1} & z_{r-j,1} \\
 z_{r-j+1,2} & \dot z_{r-j,1} \\
  z_{r-j+1,3} & 
\end{array} $} ; 
\node [block, color=black, ,fill opacity=0.1, 
minimum width = 1.1cm, minimum height = 0.7cm, 
below of = FPr-j, node distance = 0.2cm, xshift=0.75cm] (box-FPr-j) {} ;
\node [block, minimum width = \breit, minimum height = \hoch, right of=box-FPr-j, node distance=2cm] (FPr-j-1) 
{$\begin{array}{cc}
 z_{r-j,1} & z_{r-j-1,1} \\
 z_{r-j,2} & 
\end{array} $} ;
\draw[->] (FPr-1) -- (zj);
\draw[->] (FPr-2) -- (box-FPr-1);
\draw[->] (between) -- (box-FPr-2);
\draw[->] (FPr-j) -- (between);
\draw[->] (FPr-j-1) -- (box-FPr-j);
\end{tikzpicture} 
\caption{Dependence of the derivatives~$z^{(j)}$ on the intermediate pre-compensator states. The figure is based on the respective figure in~\cite{BergReis18b}.}
 \label{Fig:Dependence-on-all-states}
\end{center}
\end{figure}
At the first glance, the expression for~$z^{(j)}$ in~\cite[Rem.~3]{BergReis18b} looks lengthy and awkward to handle. 
However, we highlight that with the aid of the given formula in~\cite[Rem.~3]{BergReis18b} the computation of all required derivatives of~$z$ can be performed completely algorithmically.

\section{Main result: application of the funnel pre-compensator to minimum phase systems} \label{Sec:main-result-FPC-min-phase}
In this section we face the open question formulated in~\cite[Rem. 4]{BergReis18b}, namely if the interconnection of a minimum phase system with a cascade of funnel pre-compensators yields a minimum phase system for relative degree larger than three.
A careful inspection reveals that the proof of~\cite[Thm. 2]{BergReis18b} is incomplete as regards the boundedness of~$h_1, h_2$ in the case~$r=3$; this is, however, resolved in the present article.
%(The proof of~\cite[Thm. 2]{BergReis18b} lacks of proving boundedness of~$h_1, h_2$ in the case~$r=3$; this is, however, resolved in the present article.)
We show that for arbitrary~$r \in \N$ the interconnection of a cascade of $r-1$~funnel pre-compensators with a minimum phase system with relative degree~$r$ yields a system of the same relative degree which is minimum phase as well, and the first~$r-1$ derivatives of the interconnection's output, this is, the funnel pre-compensator's output~$z$, are known explicitly.

\ \\
\subsection{System class} \label{Sec:System-class}
We recall the system class under investigation in~\cite{BergReis18b}.
First, %in accordance with~\cite[Def. 1.1]{BergIlch21} 
we introduce the following class of operators. 
%$T: \cC([-\sigma,\infty) \to \R^{n}) \to \cL_{\rm loc}^\infty(\rp \to \R^q)$.
%
\begin{Definition} \label{Def:OP-T} %{[Def. 1.1]\cite{BergIlch21}}
If for~$\sigma > 0$ and $n,q \in \N$ the operator $T: \cC([-\sigma,\infty) \to \R^{n}) \to \cL_{\rm loc}^\infty(\rp \to \R^q)$ has the following properties
%\renewcommand{\labelenumi}{(\alph{enumi})}
%\begin{enumerate}
\begin{enumerate}[label = (\alph{enumi}), ref=(\alph{enumi}),leftmargin=*]
\item \label{T:BIBO}
 $T$ maps bounded trajectories to bounded trajectories, i.e., for all $c_1 > 0$, there exists $c_2>0$ such that for all 
$\xi \in \cC([-\sigma,\infty) \to \R^n)$,
\[
\sup_{t \in [-\sigma,\infty)} \| \xi(t) \| \le c_1 \ \Rightarrow \ \sup_{t \in [0,\infty)} \| T(\xi)(t) \| \le c_2,
\]
\item \label{T:causal}
$T$ is causal, i.e., for all $t \ge 0$ and all $\zeta, \xi \in \cC([-\sigma,\infty) \to \R^n)$, 
\[
\zeta |_{[-\sigma,t)} = \xi|_{[-\sigma,t)} \ \Rightarrow \ T(\zeta)|_{[0,t)} \overset{a.a.}{=} T(\xi)|_{[0,t)},
\]
\item \label{T:Lipschitz}
$T$ is locally Lipschitz continuous in the following sense: for all $t \ge 0 $ and all $\xi \in \cC([-\sigma,t] \to \R^n)$ 
there exist $\Delta, \delta, c > 0$ such that for all 
$\zeta_1, \zeta_2 \in \cC([-\sigma,\infty) \to \R^n)$ with $\zeta_1|_{[-\sigma,t]} = \xi $, $\zeta_2|_{[-\sigma,t]} = \xi $ 
and $\|\zeta_1(s) - \xi(t)\| < \delta$,  $\|\zeta_2(s) - \xi(t)\| < \delta$ for all $s \in [t,t+\Delta]$ we have
\[
\esssup_{s \in [t,t+\Delta]}\| T(\zeta_1)(s) - T(\zeta_2)(s) \| \le c \ \rm{sup}_{s \in [t,t+\Delta]} \| \zeta_1(s) - \zeta_2(s)\| ,
\]
\end{enumerate}
then we say the operator~$T$ belongs to the operator class~$\cT_\sigma^{n,q}$.
\end{Definition}
With this, we introduce the system class~$\cN^{m,r}$ which is the same class of systems under consideration in~\cite{BergReis18b}, namely (multi-input multi-output) systems with stable internal dynamics, and the system's input and output have the same dimension. 
\begin{Definition}
For a system
\begin{equation} \label{eq:System}
\begin{aligned}
y^{(r)}(t) &= \sum_{i=1}^r R_i y^{(i-1)}(t) + f\left(d(t), T\big(y,\dot y,\ldots,y^{(r-1)}\big)(t) \right) + \Gamma \, u(t), \\
y|_{[-\tau,0]} &= y^0 \in \cW^{r-1, \infty}([-\tau,0] \to \R^m),
\end{aligned}
\end{equation}
where $\tau > 0$ is the ``memory" of the system, i.e., an initial trajectory is given, $r \in \N$ is the relative degree, and
for~$p \in \N$ the ``disturbance" satisfies $d \in \cL^\infty(\rp \to \R^p)$,
for~$q \in \N$ we have $f \in \cC(\R^p \times \R^q \to \R^m)$,
the high gain matrix~$\Gamma$ is symmetric and sign definite (w.l.o.g. we assume $0<\Gamma = \Gamma^\top \in \R^{m \times m}$),
and the operator~$T$ belongs to the class~$\cT^{rm,q}_\tau$, 
we say system~\eqref{eq:System} belongs to the class~$\cN^{m,r}$, and we write 
\begin{equation*}
(d,f,T,\Gamma) \in \cN^{m,r}.
\end{equation*}
\end{Definition}
The function $u:\rp \to \R^m$ is called \textit{input}, the function $y : \rp \to \R^m$ \textit{output} of system~\eqref{eq:System}, respectively.
Note that the input and the output have the same dimension.
Condition~\ref{T:BIBO} in Definition~\ref{Def:OP-T} resembles a minimum phase property, more precise, an input to state stability of the internal dynamics of system~\eqref{eq:System}, 
where from the viewpoint of the internal dynamics the system's output and its derivatives act as inputs.

\ \\
For fixed input $u \in \cL^\infty(\rp \to \R^m)$ a function $y \in \cC^{r-1}([-\tau,\omega) \to \R^m)$ is called 
\textit{solution} of~\eqref{eq:System} on an interval~$[-\tau,\omega)$, where~$\omega \in (0,\infty]$, 
if $y|_{[-\tau,0]} = y^0$ and $y^{(r-1)}|_{[0,\omega)}$ is weakly differentiable and satisfies~\eqref{eq:System} for almost all~$t \in [0,\omega)$.
A solution~$y$ is called \textit{maximal solution}, if it has no right extension that is also a solution. 

\begin{Remark} \label{Rem:linear-systems}
An important subclass of~\eqref{eq:System} are linear systems of the form 
%More precise, we consider systems of the form
\begin{equation} \label{eq:linear-system}
\begin{aligned}
\dot x(t) &=  Ax(t) +   B u(t) + d(t) , & x(0) &= x^0 \in \R^n,\\
y(t) &=  Cx(t),
\end{aligned}
\end{equation}
where~$u$ denotes the input, and~$y$ the output of the system, respectively; further we have the system matrix $ A \in \R^{n \times n}$, 
the input distribution matrix~$ B \in \R^{n \times m}$ and the linear output measurement $ C : \R^{n} \to \R^m$, this is, $ C \in \R^{m \times n}$ for $m \le n$, and~$\rk C = \rk B = m$; note that the dimensions of the input and the output are equal.
Let
\begin{equation*}
\begin{aligned}
\cD(\rp \to \R^n) := \setdef{ d \in \cL^\infty(\rp \to \R^n)}{ \begin{array}{l}
\forall \, j=0,\ldots,r-1: \\
CA^{j} d \in \cW^{r-1-j,\infty}(\rp \to \R^n) 
\end{array}  } .
\end{aligned}
\end{equation*}
Note that~$\cW^{r-1,\infty}(\rp \to \R^n) \subset \cD(\rp \to \R^n)$.
Now, if
\begin{equation} \label{def:rel-dg-linear-system}
\begin{aligned}
% d & \in \cL^{\infty}(\rp \to \R^n), \text{ and }  \forall\, k  \in \{0,\ldots,r-2\} : \  C A^{k} d(\cdot) \in \cW^{r-k,\infty}(\rp \to \R^m), \\ 
d & \in \cD(\rp \to \R^n), \\
 \forall\, k  &\in \{0,\ldots,r-2\} :  C  A^{k}   B  = 0 \ \text{and} \ \Gamma :=  C  A^{r-1}  B \in \Gl_m(\R),
\end{aligned}
\end{equation}
then - straightly following the derivations and calculations in~\cite[Thm.~3]{IlchWirt13} - with
\begin{equation*}
\begin{aligned}
& \cB := \begin{bmatrix}
B & AB & \ldots & A^{r-1} B
\end{bmatrix} \in \R^{n \times rm} , \
 \cC := \begin{bmatrix}
C^\top & (CA)^\top & \ldots & (CA^{r-1})^\top
\end{bmatrix}^\top \in \R^{rm \times n} , \\
%
%\cC := \begin{bmatrix}
%C \\
%CA \\
%\vdots \\
%C A^{r-1}
%\end{bmatrix} \in \R^{rm \times n} , \\
%
& V \in \R^{n \times (n-rm)} \text{ s.t. } \im V = \ker \cC , \ N := V^\dagger ( I_n - \cB (\cC \cB)^{-1} \cC ) \in \R^{(n-rm) \times n}, \\
& U := \begin{bmatrix}
\cC \\ N
\end{bmatrix} \in \Gl_n(\R),
\end{aligned}
\end{equation*}
and the operator
\begin{equation*}
\begin{aligned}
%L & : \cW^{r-1,\infty}(\rp \to \R^n) \to \cL^\infty(\rp \to \R^{n}) , \\
L  : \cD(\rp \to \R^n) &\to \cL^\infty(\rp \to \R^{n}) , \\
d(\cdot) &\mapsto \left( t \mapsto \begin{pmatrix}
l_1(t) \\
\vdots \\
l_{r}(t) \\
0_{n-rm}
\end{pmatrix}
\right) , \ l_i(t) = \sum_{j=0}^{i-2} CA^{j} d^{(i-2-j)}(t), \ i= 1,\ldots,r
\end{aligned}
\end{equation*}
the change of coordinates
\begin{equation*}
\begin{pmatrix}
\xi_1 \\
\vdots \\
\xi_r \\
\eta
\end{pmatrix} 
= U x + L(d) =
\begin{pmatrix}
y \\
\vdots \\
y^{(r-1)} \\
\eta
\end{pmatrix} 
\end{equation*}
transforms system~\eqref{eq:linear-system} into \textit{Byrnes-Isidori form}
\begin{subequations} \label{eq:BIF-linear}
\begin{equation} \label{eq:BIF-linear-system}
\begin{aligned}
%\dot \xi_1(t) &= \xi_2(t), &\xi_1(0) &= \xi_1^0 \in \R^m, \\
%& \ \ \vdots && \ \ \vdots \\
\dot \xi_i(t) &= \xi_{i+1}(t),  &\xi_{i}(0) &= \xi_{i}^0 \in \R^m, \\
%\dot \xi_{r-1}(t) & = \xi_r(t) &\xi_{r-1}(0) &= \xi_{r-1}^0 \in \R^m, \\
\dot \xi_r(t) &= \sum_{j=1}^{r} R_j \xi_j(t) + S \eta(t) + \Gamma u(t) + d_r(t) , &\xi_r(0) &= \xi_r^0 \in \R^m, \\
\dot \eta(t) &= Q \eta(t) + P \xi_1(t) + d_\eta(t)  , & \eta(0) &= \eta^0 \in \R^{n-rm},
\end{aligned}
\end{equation}
with output
\begin{equation} \label{eq:BIF-linear-output}
y(t) = \xi_1(t),
\end{equation}
\end{subequations}
where
\begin{equation*}
\begin{aligned}
\begin{bmatrix}
R_1 & \ldots & R_r & S
\end{bmatrix} & := CA^r U^{-1} \in \R^{m \times (rm + (n-rm))}, \\
P & := NA^r B \Gamma^{-1} \in \R^{(n-rm) \times m}, \ Q = NAV \in \R^{(n-rm) \times (n-rm)}, \\
d_r(t) & := \sum_{j=0}^{r-1} \left( CA^{j} d^{(r-1-j)}(t) - R_{j+1} l_{j+1}(t) \right) \in \cL^\infty(\rp \to R^m) , \\
d_\eta(t) & := N \left( d(t) - AU^{-1} T(d)(t) \right) \in \cL^\infty(\rp \to \R^{n-rm}) ,
\end{aligned}
\end{equation*}
and the high-gain matrix~$\Gamma$ is given in~\eqref{def:rel-dg-linear-system}.
The last differential equation in~\eqref{eq:BIF-linear-system} describes the internal dynamics of system~\eqref{eq:linear-system}.
We associate the (linear) integral operator
\begin{equation} \label{def:integral-operator}
J: y(\cdot) \mapsto \left(t \mapsto \int_{0}^{t} e^{Q(t-s)} P y(s) \, \ds  \right)
\end{equation}
with the internal dynamics in~\eqref{eq:BIF-linear-system} and obtain for 
${H(\cdot) := e^{Q \cdot} [0, I_{n-rm}] U x^0}$ 
and $D(t) := e^{Qt} \left(d_\eta(0) + \int_0^{t} e^{-Qs} d_\eta(s) \, \ds \right)$
the internal state
\begin{equation*}
\eta(t) =  D(t) + H(t) + J(y)(t). 
\end{equation*}
With this and~\eqref{eq:BIF-linear} we find that~\eqref{eq:linear-system} is equivalent to the functional differential equation
\begin{equation*}
y^{(r)}(t) = \sum_{i=1}^{r} R_i y^{(i-1)}(t) + f\big(S(D(t)+H(t)),SJ(y)(t) \big)+ \Gamma u(t) + d_r(t),
\end{equation*} 
where $f(v,w) = v + w$ for $v,w \in \R^m$, and the operator~$J$ satisfies conditions~\ref{T:causal},\ref{T:Lipschitz} of Definition~\ref{Def:OP-T}.
The minimum phase property (condition~\ref{T:BIBO}) for linear systems and its various equivalent conditions have been studied extensively, see e.g. 
\cite{ByrnWill84,Berg14a,TrenStoo01}.
Here, we restrict ourself to mention the equivalence between system~\eqref{eq:linear-system} being minimum phase, i.e., $\sigma(Q) \subseteq \C_-$, and having asymptotically stable zero dynamics (see e.g.~\cite{IlchWirt13}), where the latter means (cf.~\cite{IlchRyan07,Isid95})
\begin{equation} \label{eq:minimum-phase}
 \forall\,  \lambda \in \C_- \, : \  \rk \begin{bmatrix}  A - \lambda I  &  B \\   C & 0  \end{bmatrix} = n + m.
 \end{equation} 
Note that for~$\sigma(Q) \subseteq \C_-$ we have~$D \in \cL^\infty(\rp \to \R^m)$.
Therefore, if system~\eqref{eq:linear-system} 
has relative degree~$r \in \N$ as in~\eqref{def:rel-dg-linear-system} and satisfies~\eqref{eq:minimum-phase} it is contained in the system class~$\cN^{m,r}$.
Note that if the commonly used assumption is satisfied that the disturbance does not affect the integrator chain but enters the system on the input's level (cf.~\cite{Berg20,Berg21}), i.e.,
\begin{equation*}
\forall\, k=0,\ldots,r-2 \, : \ CA^kd(\cdot) = 0,
\end{equation*}
then~$\cD(\rp \to \R^n) = \cL^\infty(\rp \to \R^n)$; 
in this case we have~$(\xi^\top , \eta^\top)^\top = Ux$ and~$(d_r^\top, d_\eta^\top)^\top = [(CA^{r-1})^\top, N^\top]^\top d$.
\end{Remark}

\ \\
We conclude this subsection with the preceding remark and turn towards the funnel pre-compensator's design parameters.

\subsection{The pre-compensator's design parameters} \label{Sec:Parameter}
%We state the following assumptions concerning the pre-compensator's funnel functions~$\vp_1, \vp$ as well as the gain matrices~$\Gamma$ and~$\tilde \Gamma$.
We introduce the set of feasible design parameters for the funnel pre-compensator. 
For~$a = (a_1,\ldots,a_{r})^\top \in \R^r$, $p = (p_1,\ldots,p_r)^\top \in \R^r$ we set
\begin{equation*} %\label{def:Sigma}
\Sigma := \setdef{ \begin{array}{l}
 a, p \in \R^r, \\
\vp, \vp_1 \in \Phi_r, \\
\rho \in \R, \\
\tilde \Gamma \in \R^{m \times m} \\
\end{array} }{\text{\ref{Ass:A-Hurwitz}~--~\ref{Ass:G} hold}} ,
\end{equation*}
where~\ref{Ass:A-Hurwitz}~--~\ref{Ass:G} denote the following properties.
\begin{enumerate}[label = \textbf{(A.\arabic{enumi})}, ref=(A.\arabic{enumi}),leftmargin=*]
%\begin{Assumption} \label{Ass:A-Hurwitz}
\item \label{Ass:A-Hurwitz}
The numbers $a_{i}$ are such that~$a_i > 0$ for all $i=1,\ldots,r$, and
\begin{align*} %\label{eq:A-Hurwitz}
A := \begin{bmatrix}
-a_1 & 1 & &\\ \vdots & & \ddots &  \\ -a_{r-1} & & & 1 \\ -a_{r} & & &0
\end{bmatrix} \in \R^{r \times r}
% A := \begin{bmatrix}
%-q_1 I_m & I_m & &\\ \vdots & & \ddots &  \\ -q_{r-1}I_m & & & I_m \\ -q_{r} \Gamma \tilde \Gamma^{-1} & 0 & \cdots &0
%\end{bmatrix} \in \R^{rm \times rm}
\end{align*}
is Hurwitz, i.e. $\sigma(A) \subseteq \C_-$. 
Furthermore, let
$P = \begin{smallbmatrix}  P_1 &  P_{2} \\  P_{2}^\top &  P_{4} \end{smallbmatrix} > 0$, with $ P_1 \in \R$, $ P_{2} \in \R^{1 \times (r-1)}$, $ P_{4} \in \R^{(r-1) \times (r-1)}$ 
be the solution of
\begin{align*} % \label{eq:P-Lyapunov}
A^\top P + P A + Q = 0 
\end{align*}
for some $Q \in \R^{r \times r}$ with $Q = Q^\top > 0$; then~$p$ is defined as
\begin{equation*} %\label{eq:pi}
\begin{pmatrix}
p_{1 }\\ \vdots \\ p_{r} \end{pmatrix} :=
P^{-1}\begin{pmatrix}
 P_1 -  P_{2}  P_{4}^{-1}  P_{2}^\top  \\ 0 \\ \vdots \\ 0
\end{pmatrix} 
= \begin{pmatrix}
1 \\ -  P_{4}^{-1}  P_{2}^\top 
\end{pmatrix}.
\end{equation*}
%\end{Assumption}
%
%\begin{Assumption} \label{Ass:funnel-functions}
\item \label{Ass:funnel-functions}
The funnel functions~$\vp_1, \vp \in \Phi_r$ from~\eqref{eq:FPC-cascade} satisfy
\begin{equation*}
%\forall\, t\ge0: \  \vp(t) = \rho  \vp_1(t).
\exists \, \rho >1 \ \forall\, t\ge0: \  \vp(t) = \rho \, \vp_1(t).
%\exists \, R > 0 \ \exists \, \rho > \frac{4}{R^2} \ \forall\, t\ge0: \ \vp(t) = \rho \vp_1(t).
%\exists \, \rho >1 \ \forall\, t\ge0: \ \vp_{r-1}(t) = \vp_{r-2}(t) = \cdots = \vp_2(t) = \rho \vp_1(t).
\end{equation*}
%\end{Assumption}
%
%\begin{Assumption} 
\item \label{Ass:Gam-tildeGam}
The matrix $\tilde \Gamma$ from~\eqref{eq:FPC-cascade} is symmetric and sign definite (w.l.o.g. we assume~$\tilde \Gamma > 0$) and moreover, for $\Gamma = \Gamma^\top > 0$ from~\eqref{eq:System} we have 
\begin{equation*}
{\Gamma \tilde \Gamma^{-1} = \left(\Gamma \tilde \Gamma^{-1} \right)^\top > 0}. 
\end{equation*}
%\end{Assumption}
%
%\begin{Assumption} \label{Ass:G}
\item  \label{Ass:G}
For~$\Gamma$ from~\eqref{eq:System},~$\tilde \Gamma$ from~\eqref{eq:FPC-cascade} and~$\rho$ from~\ref{Ass:funnel-functions} the matrix $G := I_m - \Gamma \tilde \Gamma^{-1}$ satisfies
\begin{equation*} 
\|G\| < \min \left\{\frac{\rho - 1}{r-2} , \, \frac{\rho}{4\rho^2 (\rho+1)^{r-2} -1} \right\} .
\end{equation*}
%\end{Assumption}
\end{enumerate}
If conditions~\ref{Ass:A-Hurwitz}~--~\ref{Ass:G} hold we write~$(a,p,\vp,\vp_1,\rho,\tilde \Gamma) \in \Sigma$.
Condition~\ref{Ass:funnel-functions} means that the first funnel which limits the error~$y-z_{1,1}$ is somewhat tighter than the others; property~\ref{Ass:Gam-tildeGam} in particular asks for regularity of the matrix product~$\Gamma \tilde \Gamma^{-1}$, and~\ref{Ass:G} ensures that the matrix~$\tilde \Gamma$ is ``not too different" from matrix~$\Gamma$.
\begin{Remark} \label{Rem:Constants}
At the first glance, there are a lot of parameters to be chosen appropriately satisfying~\ref{Ass:A-Hurwitz}.
However, consider the polynomial $(s+s_0)^r$, which has all its roots in~$\C_-$ for~$s_0 > 0$. Then, with
\begin{equation*}
(s+s_0)^r = s^r + \sum_{i=1}^{r} a_i s^{r-i}
\end{equation*}
we obtain
\begin{equation*}
A := \begin{bmatrix}
-a_1 & 1 & &\\ \vdots & & \ddots &  \\ -a_{r-1} & & & 1 \\ -a_{r} & & &0
\end{bmatrix} \in \R^{r \times r}, \quad \sigma(A) \subseteq \C_-, 
\end{equation*}
this is, the matrix~$A$ is Hurwitz.
Moreover, the simple choice~$Q = I_m$ is always feasible; and the matrix~$P$ is completely determined by the choice of~$A$ and~$Q$.
Therefore, since the constants $p_1,\ldots,p_r$ are given via the Lyapunov matrix~$P$, all parameters required to satisfy~\ref{Ass:A-Hurwitz} 
can be determined by choosing the real number~${s_0 > 0}$.
\end{Remark}

\subsection{The funnel pre-compensator applied to minimum phase systems} \label{Sec:FPC}
In this section we show that the conjunction of a system~\eqref{eq:System} with a cascade of funnel pre-compensators as in~\eqref{eq:FPC-cascade} leads to a minimum phase system. 
To this end, we prove the extension of~\cite[Thm. 2]{BergReis18b} for arbitrary relative degree~$r \in \N$.

\ \\
We show that the minimum phase property of system~\eqref{eq:System}, modelled by property~\ref{T:BIBO} of Definition~\ref{Def:OP-T} % of the operator~$T$, 
 is preserved by the 
conjunction of system~\eqref{eq:System} with the cascade of funnel pre-compensators~\eqref{eq:FPC-cascade}.
To this end, we require that the operator~$T$ satisfies a stronger condition.
\begin{Definition}
For~$r,m \in \N$, $n = rm$ and~$1\le k \le r$ we define the operator 
class~$\cT^{n,q}_{\sigma,k} := \setdef{ T \in \cT^{n,q}_\sigma}{ T~\text{satisfies~\ref{cond:a_k}}} \subseteq \cT^{n,q}_\sigma$ (equality if~$k=r$), where
\begin{enumerate}[label = ($\alph{enumi}'_k$), ref=($\alph{enumi}'_k$),leftmargin=*]
\item \label{cond:a_k}
for all $c_1 > 0$ there exists $c_2 > 0$ such that for all $\xi_1,\ldots,\xi_r \in \cC([-\sigma,\infty) \to\R^m)$  
\begin{equation*}
\sup_{t \in [-\tau,\infty)} \left\| \left(\xi_1(t)^\top,\ldots,\xi_{k}(t)^\top \right) \right\|  \le c_1 \ \Rightarrow \ \sup_{t \in [0,\infty)} \| T(\xi_1,\ldots,\xi_r)(t) \| \le c_2. 
\end{equation*}
\end{enumerate}
\end{Definition}
Now, the stronger condition on the operator~$T$, namely boundedness whenever the first component of its input is bounded, reads~$T \in \cT^{{rm},q}_{\sigma,1}$, this is, $T$ is bounded whenever~$y: \rp \to \R^{m}$ is bounded.
Furthermore, we set \[ \cN^{m,r}_k := \setdef{ (d,f,T,\Gamma) \in \cN^{m,r} }{ T \in \cT^{n,q}_{\sigma,k} } \subseteq \cN^{m,r} \text{ (equality if~$k=r$).} \]
\begin{Remark}
The somewhat arcane condition~$T \in \cT^{rm,q}_{\sigma,1}$ reflects the intuition that in order to have the conjunction of the system with the funnel pre-compensator being a minimum phase system, we must be able to conclude from the available information (only the output~$y$) that the internal dynamics stay bounded. 
Note that this, however, \textit{does not} mean that the operator~$T$ does not act on the output signal's derivatives but on~$y$ only, see Example~\ref{Ex:tracking}.
\end{Remark}

\begin{Remark} \label{Rem:linear-min-phase-satisfy-a-prime}
We highlight that, if for~$Q$ in~\eqref{eq:BIF-linear-system} we have~$\sigma(Q) \subseteq \C_-$, this is, if condition~\eqref{eq:minimum-phase} is satisfied, the operator defined in~\eqref{def:integral-operator} is contained in~$\cT^{rm,q}_{\sigma,1}$.
This means, the class of linear minimum phase systems which satisfy~\eqref{def:rel-dg-linear-system} is encompassed by the system class~$ \cN^{m,r}_1$.
Moreover, $\cN^{m,r}_1$ encompasses systems of the following form
\begin{equation} \label{eq:System-within-class}
\begin{aligned}
y(t) &= \xi_1(t), \\
\dot \xi_i(t) &= \xi_{i+1}(t), \quad i=1,\ldots,r-1, \\
\dot \xi_r(t) &= f(t,\xi(t),\eta(t)) + \Gamma u(t), \\
\dot \eta(t) &= g(\eta(t),\xi_1(t)), \\
\end{aligned}
\end{equation}
where for~$rm \le n \in \N$ the function~$f: \rp \times \R^{rm} \times \R^{n-rm} \to \R^{m}$ is locally Lipschitz in~$(\xi,\eta) \in \R^{rm} \times \R^{n-rm}$, and piecewise continuous and bounded in~$t$; 
${g: \R^{n-rm} \times \R^{m} \to \R^{n-rm}}$ is such that for~$\xi_1 \in \cL^{\infty}(\rp \to \R^m)$ the corresponding ODE has a bounded solution (see e.g.~\cite[Thm. 4.3]{Lanz21}); and~$\Gamma \in \Gl_m(\R)$ is symmetric and sign definite.
Therefore, with constant input parameter a subclass of the class of systems under consideration in~\cite{ChowKhal19} is contained in~$ \cN^{1,r}_1 \subset  \cN^{m,r}_1$.
Moreover, the class~$\cN^{m,r}_1$ encompasses the system class under consideration in~\cite{IlchRyan07}.
%\red{
In~\cite[Cor.~5.7]{ByrnIsid91a} explicit criteria on the parameters~$a,b$ of nonlinear systems of the form $\dot x(t) = a(x(t)) + b(x(t)) u(t)$ are given such that it can be transformed into a system~\eqref{eq:System-within-class}.
%}
\end{Remark}

\ \\
We present a version of~\cite[Thm. 2]{BergReis18b} (in particular without the restriction~${r \in \{ 2,3 \}}$), and hereinafter prove it.
\begin{Theorem} \label{Thm:FPC-min-phase}
Consider a system~\eqref{eq:System} with 
$(d,f,T,\Gamma) \in  \cN^{m,r}_1$ (note that $T \in \cT^{rm,q}_{\tau,1}$, $q \in \N$) and $y^0 \in \cW^{r-1,\infty}([-\tau,0] \to \R^m)$. 
Further consider the cascade of funnel pre-compensators defined by~\eqref{eq:FPC-cascade} with~$(a,p, \vp,\vp_1,\rho, \tilde \Gamma ) \in \Sigma $
%$\vp_1, \vp \in \Phi_{r}$ such that Assumption~\ref{Ass:funnel-functions} is satisfied, 
and assume the initial conditions
\begin{equation} \label{eq:inital}
\vp_1(0)\| y(0) - z_{1,1}^0 \| < 1, \quad \vp(0)\| z_{i-1,1}^0 - z_{i,1}^0 \| < 1, \quad i=2,\ldots,r-1,
%\vp_1(0)\| y(0) - z_{1,1}(0) \| < 1, \quad \vp(0)\| z_{i-1,1}(0) - z_{i,1}(0) \| < 1, \quad i=2,\ldots,r-1. 
\end{equation}
are satisfied.
Then, for $\bar q = rm(r-1)+r$, there exist $\tilde d \in \cL^\infty( \rp \to \R^r$), $\tilde F \in \cC( \R^r \times \R^{\bar q} \to \R^m)$ 
and an operator $\tilde T : \cC([-\tau,\infty) \to \R^{rm}) \to \cL_{\rm loc}^\infty(\rp \to \R^{\bar q})$ with
\begin{equation*}
(\tilde d,\tilde F,\tilde T, \tilde \Gamma) \in \cN^{m,r} 
\end{equation*}
such that
the conjunction of~\eqref{eq:FPC-cascade} and~\eqref{eq:System} with input~$u$ and output~$z : = z_{r-1,1}$ can be equivalently written as
\begin{equation} \label{eq:ddt_r-z}
z^{(r)}(t) = \tilde F\big( \tilde d(t), \tilde T(z,\dot z,\ldots,z^{(r-1)})(t) \big) + \tilde \Gamma u(t), % , \quad z(0) = z_{r-1,1}^0 \in \R^m.
\end{equation}
with respective initial conditions.
\end{Theorem}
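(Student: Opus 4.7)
The plan is to obtain the representation~\eqref{eq:ddt_r-z} by differentiating $z = z_{r-1,1}$ exactly $r$ times through the cascade, to collect the resulting expression into the triple $(\tilde d, \tilde F, \tilde T, \tilde \Gamma)$, and then to verify the four defining properties of the class $\cN^{m,r}$, the minimum phase condition \ref{T:BIBO} being the delicate one. Concretely, differentiating the top level via~\eqref{eq:FPC} produces, at each step, a term of the form $(a_j + p_j h_{r-1})(z_{r-2,1} - z_{r-1,1})$ together with $z_{r-1,j+1}$, culminating after $r$ differentiations in the summand $\tilde \Gamma u$. Each appearance of $z_{r-2,1}^{(k)}$ is in turn expanded using the pre-compensator at level $r-2$, and so on; the dependency structure is exactly the one depicted in Figure~\ref{Fig:Dependence-on-all-states}. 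Derivatives of $\vp$ entering through $h_i^{(\ell)}$ are bounded by the definition of $\Phi_r$ and will be absorbed into $\tilde d$.

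Next I would identify $\tilde F$, $\tilde T$ and $\tilde d$ from this expansion: $\tilde T$ tracks the $rm(r-1)$ internal cascade states $\{z_{i,j}\}_{i=1,\dots,r-1,\, j=1,\dots,r}$ together with $r$ funnel-related scalars (the $h_i$ and their relevant derivatives), giving the claimed dimension $\bar q = rm(r-1) + r$; $\tilde F$ is the algebraic map reading off the right-hand side of $z^{(r)}$ from these quantities; and $\tilde d$ collects the bounded contributions stemming from $\vp^{(\ell)}$ and from the output of the original $T$. The causality and local Lipschitz properties \ref{T:causal}, \ref{T:Lipschitz} for $\tilde T$ follow directly from the fact that each $z_{i,j}$ solves a locally Lipschitz ODE driven by $z$ (and its predecessors in the cascade), combined with the corresponding properties of $T$ inherited from $(d,f,T,\Gamma)\in\cN^{m,r}_1$. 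Symmetry and positive definiteness of the new high-gain matrix are already granted since it is the same $\tilde \Gamma > 0$ fixed in $\Sigma$.

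The main obstacle is the BIBO property \ref{T:BIBO} for $\tilde T$, which is where the constants in $\Sigma$ are actually used. One proceeds \emph{backward} through the cascade: assume $(z,\dot z,\dots,z^{(r-1)}) = (z_{r-1,1},\dots,z_{r-1,1}^{(r-1)})$ is bounded; read the last equation of the top pre-compensator to bound $z_{r-1,r}$; then use Theorem~1 of~\cite{BergReis18b} to infer that the error $e_{r-1} := z_{r-2,1} - z_{r-1,1}$ stays in $\cF_\vp$, whence $z_{r-2,1}$ is bounded \emph{provided} $h_{r-1}$ is uniformly bounded, i.e., $\vp\|e_{r-1}\|$ is bounded away from $1$. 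Iterating this argument down the cascade eventually gives boundedness of $y$, and the hypothesis $T \in \cT^{rm,q}_{\tau,1}$ then yields boundedness of the output of $T$, closing the loop.

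The hard step is therefore the uniform bound on all the $h_i$, and this is exactly what the conditions~\ref{Ass:A-Hurwitz}--\ref{Ass:G} are tailored for. I would carry out an induction on $i$ with a Lyapunov function built from $P$ in~\ref{Ass:A-Hurwitz} in the coordinates adapted to $p = (1, -P_4^{-1}P_2^\top)^\top$, in which the pre-compensator equations decouple the $\xi$-block from the growth-producing part; the Lyapunov derivative along \eqref{eq:FPC} produces a term $-Q$ from \ref{Ass:A-Hurwitz} together with a perturbation of size $O(\|G\|)$ coming from the mismatch $G = I_m - \Gamma\tilde\Gamma^{-1}$, and an initial excess $\rho$ coming from \ref{Ass:funnel-functions}. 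The bound in~\ref{Ass:G} is exactly the headroom needed for this estimate to close and for the containment to propagate to the next cascade stage; this is precisely the point at which the proof of~\cite[Thm.~2]{BergReis18b} was incomplete for $r=3$. Once this induction is in place, \ref{T:BIBO} for $\tilde T$ is immediate and the conjunction $(\tilde d,\tilde F,\tilde T,\tilde \Gamma) \in \cN^{m,r}$ follows.
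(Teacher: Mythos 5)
Your overall architecture matches the paper's: represent the conjunction as \eqref{eq:ddt_r-z} by differentiating $z=z_{r-1,1}$ through the cascade (this is essentially \eqref{eq:ddt_j-z}--\eqref{Def:tildeF}), let $\tilde T$ be the solution operator of the cascade error dynamics together with the gains, absorb the funnel-function derivatives into $\tilde d$, and reduce everything to the BIBO property \ref{T:BIBO}, whose crux is a uniform bound on the gain functions $h_i$. You also correctly locate where \ref{Ass:A-Hurwitz}--\ref{Ass:G} enter. However, your proposed mechanism for the crux --- ``an induction on $i$ \ldots{} and for the containment to propagate to the next cascade stage'' --- is exactly the step that does not go through, and it is the reason the paper's Step~2b is long and non-inductive. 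After the change of variables that eliminates the unknown derivatives of $y$ (the passage from $v_{i,j}$ to $w_{i,j}$ involving $G=I_m-\Gamma\tilde\Gamma^{-1}$), the error equations \eqref{eq:ddt-w-bounded} have a \emph{cyclic} coupling: the equation for $x_i$ contains the preceding gain $h_{i-1}$, and the equation for $x_1$ contains the \emph{last} gain $h_{r-1}$ (through the term $h_{r-1}\,x_1^\top G\,x_{r-1}$). Hence there is no stage at which a forward (or backward) induction can start: to bound $h_1$ you apparently need $h_{r-1}$, to bound $h_{r-1}$ you need $h_{r-2}$, and so on around the loop. Your sketch offers no device to break this cycle, and a per-stage Lyapunov estimate with an $O(\|G\|)$ perturbation does not by itself do so, since the perturbation is multiplied by the unbounded-looking gain $h_{r-1}$.

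The paper breaks the cycle by a simultaneous contradiction argument with a \emph{graded} family of funnel margins: constants $\kappa_1>\kappa_2>\cdots>\kappa_{r-1}$ are fixed via $\delta\in\bigl(\tfrac1{\rho+1},\tfrac12\bigr)$ and $\hat\delta\in(\tfrac1\rho,1]$ as in \eqref{eq:kappas}, one considers the first time $t_0$ at which some margin $\psi(t)-\|x_\ell(t)\|$ (resp.\ $\psi_1-\|x_1\|$) drops below its own threshold $\kappa_\ell$, and distinguishes the cases $\ell=1$ (with two sub-cases comparing $\psi_1-\|x_1\|$ and $\psi-\|x_{r-1}\|$), $\ell=2$, and $3\le\ell\le r-1$. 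In each case the index preceding $\ell$ in the loop is still inside its (suitably larger) margin, which yields an upper bound on that gain of the form \eqref{eq:h_r-1-le} and, using \ref{Ass:funnel-functions} and the quantitative smallness of $\|G\|$ in \ref{Ass:G} (estimates \eqref{eq:estimate_annoying-term}, \eqref{eq:h1-h_r-1}), a differential inequality $\ddt\tfrac12\|x_\ell\|^2\le -L\|x_\ell\|$ that contradicts the definition of $t_0$ as in \eqref{eq:contradiction}. Separately --- and prior to this --- the boundedness of the full transformed states $w_{i,j}$ (not only the first components) is established by a single Lyapunov function $w^\top\cP w$ built from the Kronecker products $\hat P$, $\hat P_1$ and Gr\"onwall's lemma; this is needed for \ref{T:BIBO} because $\tilde T$ outputs all $w_{i,j}$, and your proposal conflates this step with the gain bound. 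Note also that boundedness of $y$ itself already follows from the funnel inclusions alone (the solution evolves in $\cD$ and $\liminf\vp>0$), not from boundedness of $h_{r-1}$ as you suggest; the gains must be bounded because they are components of $\tilde T$'s output and because global existence is deduced from the solution's graph not being precompact in $\cD$. As it stands, your argument has a genuine gap at the decisive step, even though you identify correctly where the difficulty lies.
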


\ \\
Since the proof is quite long and partly technical we present a sketch of it here; the proof itself is relegated to the Appendix and is subdivided in three main steps.
In the first step we recall the transformations given in~\cite[pp.~4759-4760]{BergReis18b} which allow to analyse the error dynamics of two successive pre-compensators. 
The second step is the main part of the proof consisting of preparatory work to show that there exists an operator~$\tilde T \in \cT^{rm,\bar q}_\tau$ 
such that the conjunction of a minimum phase system~\eqref{eq:System} with a cascade of funnel pre-compensators~\eqref{eq:FPC-cascade} can be written as in~\eqref{eq:ddt_r-z}; 
the functions~$\tilde d$ and~$\tilde F$ are then given naturally.
We define an operator~$\tilde T$ mapping the pre-compensator's output~$z$ and its derivatives to the state of the overall auxiliary error-system~\eqref{eq:ddt_w} and the respective gain functions. 
In order to show that~$\tilde T$ satisfies condition~\ref{T:BIBO} in Definition~\ref{Def:OP-T} we establish boundedness of the solution of the auxiliary system~\eqref{eq:ddt_w} and the respective gain functions.
Here, step two splits into two parts.
First, using~$T \in \cT^{rm,q}_{\tau,1}$ - more specifically we use that $T(y,\ldots,y^{(r-1)})$ is bounded whenever~$y$ is bounded - we may define an overall system of errors of two successive pre-compensators, namely sytem~\eqref{eq:ddt_w-overall} for~$i=3,\ldots,r-1$
\begin{equation*} %\label{eq:ddt_w-overall}
\begin{aligned}
\dot w_1(t) &= \hat A w_1(t) - h_1(t) \bar P \Gamma \tilde \Gamma^{-1} \bar w(t) + B_1(t) , \\
\dot w_2(t) &= \hat A w_2(t) - h_2(t) \bar P w_{2,1}(t) + h_1(t) \bar P \bar w(t) + B_2(t)  , \\
\dot w_i(t) &= \hat A w_i(t) - h_i(t) \bar P w_{i,1}(t) + h_{i-1}(t) \bar P w_{i-1,1}(t) + B_i(t) ,
\end{aligned}
\end{equation*}
where the error-states~$w_{i,j}$ stem from the transformations in Step~1 and the compact states~$w_i, \bar w$ are defined at the beginning of Step~2a; $B_1, B_2, B_i$ are bounded functions. 
For this overall error-system~\eqref{eq:ddt_w-overall} we find a Lyapunov function, which in combination with Gr\"onwall's lemma allows us to deduce boundedness of the error-states~$w_i$.
In the second part of step two, which is the most technical part of the proof, we show that the gain functions~$h_i$ are bounded. 
This demands particular accuracy since the functions~$h_i$ may introduce singularities.
Due to the shape of the gain functions, namely~$h(t) = \left( 1 - \vp(t)^2\|x(t)\|^2 \right)^{-1}$ boundedness is equivalent to the existence of~$\nu > 0$ 
such that $\|x(t)\| \le \vp(t)^{-1} - \nu$, which is commonly utilized in the standard funnel control proofs, cf.~\cite[pp.~350-351]{BergLe18a}.
However, unlike the standard funnel case, the dynamics under consideration involve the previous gain function, respectively, and the first equation involves the last gain function; to see this, we recall~\eqref{eq:ddt-w-bounded} for~$i=2,\ldots,r-1$
\begin{equation*} %\label{eq:ddt-w-bounded}
\begin{aligned}
\ddt \tfrac{1}{2} \| x_1(t)\|^2 &= -h_1(t) \| x_1(t)\|^2 + h_{r-1}(t) x_1(t)^\top G x_{r-1}(t) + x_1(t)^\top b_1(t), \\
\ddt \tfrac{1}{2} \| x_{i}(t)\|^2 &= -h_i(t) \| x_{i}(t)\|^2 + h_{i-1}(t) x_{i}(t)^\top x_{i-1}(t) + x_{i}(t)^\top b_i(t),
\end{aligned}
\end{equation*}
where~$x_i$ are auxiliary states defined in Step~2b; $b_1,b_i$ are bounded functions.
It turns out that this loop structure demands some technical derivations and requires accurate estimations of the involved expressions.
Exploiting properties~\ref{Ass:A-Hurwitz}~--~\ref{Ass:G} of the design parameters we can show by contradiction that there exist~$\kappa_i > 0$ such that $\|x_i(t)\| \le \vp(t)^{-1} - \kappa_i$ for all~$i=2,\ldots,r-1$ (and $\|x(t)_1\| \le \vp_1(t)^{-1} - \kappa_1$), respectively, which implies boundedness of all gain functions~$h_i$.
In step three we summarize the previously established results to deduce~$\tilde T \in \cT^{rm,\bar q}_{\tau}$.
Then, the functions~$\tilde d$ and~$\tilde F$ arise naturally in equation~\eqref{Def:tildeF}.
Together, we may conclude that the conjunction of a minimum phase system~\eqref{eq:System}, where $(d,f,T,\Gamma) \in \cN^{m,r}_1$, 
with a cascade of funnel pre-compensators~\eqref{eq:FPC-cascade}, where~$(a,p,\vp,\vp_1,\rho, \tilde \Gamma) \in \Sigma$, can be equivalently written as a minimum phase system~\eqref{eq:ddt_r-z} with~$(\tilde d, \tilde F, \tilde T, \tilde \Gamma) \in \cN^{m,r}$.

\ \\
As a direct consequence of the pre-compensator's design, namely to be of funnel type, we have the following result.
\begin{Corollary}
We use the notation and assumptions from Theorem~\ref{Thm:FPC-min-phase}. 
Then, for any $u \in \cL_{\rm loc}^\infty(\rp \to \R^m)$ and any solution of~\eqref{eq:FPC-cascade},\eqref{eq:System} with initial conditions~\eqref{eq:inital} we have
\begin{equation} \label{eq:transient-y-z}
\exists\, \ve>0 \ \forall \ t>0: \ \|y(t) - z(t) \| <  (\rho + r-2) \vp(t)^{-1} - \ve.
%\exists\, \ve>0 \ \forall \ t>0: \ \|y(t) - z(t) \| < \vp_1(t)^{-1} - \ve_1 + \sum_{2=1}^{r-1} \vp(t)^{-1} - \ve_i.
\end{equation}
\end{Corollary}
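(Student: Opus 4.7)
The plan is to reduce the claim to a telescoping estimate over the cascade, each term of which is controlled by the funnel property of the corresponding pre-compensator layer. Since $z = z_{r-1,1}$, I would start by writing
\begin{equation*}
y(t) - z(t) = \bigl(y(t) - z_{1,1}(t)\bigr) + \sum_{i=2}^{r-1}\bigl(z_{i-1,1}(t) - z_{i,1}(t)\bigr),
\end{equation*}
a sum of $r-1$ consecutive errors, namely the error at the first stage of the cascade and the $r-2$ errors between successive stages.

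Next I would invoke Theorem~\ref{Thm:FPC-min-phase}. Under the standing hypotheses, the proof of that theorem establishes (as summarised in Step~2b of its sketch) that the gain functions $h_1,\ldots,h_{r-1}$ associated with the cascade are essentially bounded on $\rp$. By the very form $h_i(t) = (1-\vp_i(t)^2\|e_i(t)\|^2)^{-1}$, where $e_1 := y - z_{1,1}$ is bounded by $\vp_1$ and $e_i := z_{i-1,1}-z_{i,1}$ is bounded by $\vp$ for $i=2,\ldots,r-1$, boundedness of $h_i$ is equivalent to the existence of constants $\kappa_i > 0$ such that
\begin{equation*}
\|y(t) - z_{1,1}(t)\| \le \vp_1(t)^{-1} - \kappa_1, \qquad \|z_{i-1,1}(t) - z_{i,1}(t)\| \le \vp(t)^{-1} - \kappa_i
\end{equation*}
for $i=2,\ldots,r-1$ and all $t > 0$, i.e.\ every error evolves strictly inside its performance funnel with uniform margin.

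Combining the triangle inequality with these margins and with assumption~\ref{Ass:funnel-functions}, which gives $\vp_1(t)^{-1} = \rho\,\vp(t)^{-1}$, I would conclude
\begin{equation*}
\|y(t)-z(t)\| \le \vp_1(t)^{-1} + (r-2)\vp(t)^{-1} - \sum_{i=1}^{r-1}\kappa_i = (\rho + r-2)\,\vp(t)^{-1} - \ve,
\end{equation*}
with $\ve := \sum_{i=1}^{r-1}\kappa_i > 0$. This is precisely~\eqref{eq:transient-y-z}. There is no real obstacle here: the entire substance is inherited from Theorem~\ref{Thm:FPC-min-phase}, whose proof already supplies the crucial strict margins $\kappa_i$; the corollary amounts to summing these bounds and collapsing the geometric factor via $\vp=\rho\,\vp_1$.
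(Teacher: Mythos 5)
Your decomposition of $y-z$ into the $r-1$ stage errors, the use of $\vp_1(t)^{-1}=\rho\,\vp(t)^{-1}$ from~\ref{Ass:funnel-functions}, and the final arithmetic giving $(\rho+r-2)\vp(t)^{-1}-\ve$ are exactly what the paper's ``iterative'' argument amounts to; the problem lies in where you claim the uniform margins $\kappa_i$ come from. You take them from Step~2b of the proof of Theorem~\ref{Thm:FPC-min-phase}, asserting that this step establishes $h_1,\ldots,h_{r-1}\in\cL^\infty(\rp\to\R)$ under the corollary's hypotheses. It does not: all of Step~2 of that proof (Steps~2a and~2b, including the boundedness of the perturbation terms $B_i$ and $b_i$ that enter the Lyapunov and contradiction estimates) is carried out under the additional assumption, introduced there precisely to verify property~\ref{T:BIBO} of Definition~\ref{Def:OP-T} for the operator $\tilde T$, that $z,\dot z,\ldots,z^{(r-1)}$ are bounded. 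The statement of Theorem~\ref{Thm:FPC-min-phase} itself asserts nothing about the gains $h_i$ or about funnel margins. The corollary, however, is claimed for arbitrary $u\in\cL_{\rm loc}^\infty(\rp\to\R^m)$ and arbitrary solutions, where no such boundedness of $z$ and its derivatives is available, so the margins you invoke are not supplied by the theorem or its proof; as written, the key step of your argument is conditional on a hypothesis you do not have.

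The paper instead obtains the per-stage margins from an iterative application of the single-pre-compensator feasibility result \cite[Prop.~1]{BergReis18b}: since each funnel pre-compensator maps $\cP_r$ to $\cP_r$, that proposition applies to every stage of the cascade~\eqref{eq:FPC-cascade} and directly yields $\ve_1>0$ with $\|y(t)-z_{1,1}(t)\|\le\vp_1(t)^{-1}-\ve_1$ and, for $i=2,\ldots,r-1$, $\ve_i>0$ with $\|z_{i-1,1}(t)-z_{i,1}(t)\|\le\vp(t)^{-1}-\ve_i$, independently of the minimum-phase machinery in the Appendix. With these stage-wise bounds your telescoping and the relation $\vp_1^{-1}=\rho\,\vp^{-1}$ finish the proof exactly as you wrote it. So the fix is to replace the appeal to Step~2b by an appeal to \cite[Prop.~1]{BergReis18b} applied stage by stage along the cascade.
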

\begin{proof}
The prescribed transient behaviour~\eqref{eq:transient-y-z} follows directly from an iterative application of~\cite[Prop.~1]{BergReis18b}.
\qed
\end{proof}

\begin{Remark}
A careful inspection of the proof of Theorem~\ref{Thm:FPC-min-phase} reveals that conditions~\ref{Ass:funnel-functions}~--~\ref{Ass:G} on the design parameters
are sufficient but far from necessary.
Condition~\ref{Ass:G} on the norm of the matrix~${G = I_m -  \Gamma \tilde \Gamma^{-1}}$ can be interpreted as a ``small gain condition" 
as conjectured in~\cite[Rem. 4]{BergReis18b}; roughly speaking it means ``choose the matrix~$\tilde \Gamma$ close enough to the matrix~$\Gamma$".
Examining the proof shows that this condition plays a crucial role in the estimations~\eqref{eq:estimate_annoying-term} and~\eqref{eq:h1-h_r-1}, however, it allows for various reformulations and small changes which still are sufficient to prove the theorem; especially, the condition~$\|G\| < \rho/(4\rho^2(\rho+1)^{r-2} -1)$ has many varieties - we cannot claim having found the weakest.
If~$\Gamma$ is known, the simple choice $\tilde  \Gamma = \Gamma$ is feasible and the proof simplifies significantly; moreover, in this case~\ref{Ass:Gam-tildeGam}~\&~\ref{Ass:G} are satisfied at once.
However, in general the matrix~$\Gamma$ is not (or only partially) known and hence verification of conditions~\ref{Ass:Gam-tildeGam}~\&~\ref{Ass:G} causes problems.
In such general cases methods for parameter identification can be useful. 
For linear systems of type~\eqref{eq:linear-system} there is plenty of literature on system identification, see for instance~\cite{RaoSiva82,BingSinh90}, and the recent work~\cite{WaarPers20} where under the assumptions of controllability and persistently exciting inputs system identification is performed; in~\cite{ZhenLi21} the estimated parameters result from a least square problem.
Note that although the system identification in~\cite{WaarPers20,ZhenLi21} is developed for time-discrete linear systems the results can be applied to time-continuous linear systems to some extend, see e.g.~\cite[Sec.~1]{BingSinh90}.
In~\cite{FlorPont02} parameter identification for nonlinear systems is studied, where under a identifiability condition and with the aid of a high-gain observer system parameters are identified.
In~\cite{FarzMena15} an extended high-gain observer is introduced to identify the state and the unknown parameters dynamically; and
in the recent (rather technical) work~\cite{KaltNguy21} parameter identification via an adaption scheme for nonlinear systems is proposed and an error bound between the nominal and the estimated parameter is given.
However, the approaches~\cite{FlorPont02,FarzMena15,KaltNguy21} involve the system equations and hence the parameter identification is not model free.
%Nevertheless, if a model is available then with the help of offline simulations and using the error bound between the nominal~($\Gamma$) and the estimated~($\tilde \Gamma$) parameter given in~\cite[Prop. 2.1]{KaltNguy21} a stronger version of Assumption~\ref{Ass:G} with
Nevertheless, if a model is available an extension of~\cite[Prop. 2.1]{KaltNguy21} to matrix valued parameters may yield error bounds such that the stronger version of~\ref{Ass:G} 
\begin{equation*}
\|\tilde \Gamma^{-1}\| \|\tilde \Gamma - \Gamma\| < \min \left\{\frac{\rho -1}{r-2}, \frac{ \rho }{4\rho^2(\rho+1)^{r-2} - 1} \right\}
\end{equation*}
can ensured to be satisfied.
\end{Remark}

\begin{Remark} \label{Rem:k-derivatives-known}
If the first~$k \le r-1$ derivatives of the output signal~$y(t)$ are known, the funnel pre-compensator can be applied to~$y^{(k)}(t)$. 
Then, the condition~$T \in \cT^{{rm},q}_{\sigma,1}$ in Theorem~\ref{Thm:FPC-min-phase} becomes the relaxed condition~$T \in \cT^{{rm},q}_{\sigma,k}$.
Moreover, the error bound tightens and we have
\begin{equation*}
\forall\, t\ge0\, : \ \| y(t) - z(t) \| < (\rho + r-2-k)\vp(t)^{-1} - \ve.
 \end{equation*} 
\end{Remark}

%\red{
\begin{Remark}
Although the funnel pre-compensator introduced in~\cite{BergReis18b} may take signals~$y$ and~$u$ with different dimensions, the system class~$\cN^{m,r}$ under consideration is restricted to systems where the input and output have the same dimension;
this comes into play when applying control schemes to the conjuntion of a minimum phase system with a cascade of funnel pre-compensators, see Section~\ref{Sec:Tracking}. 
However, a careful inspection of the proof of Theorem~\ref{Thm:FPC-min-phase} yields that an extension of Theorem~\ref{Thm:FPC-min-phase} to systems with different input ($u :\rp \to \R^m$) and output ($y: \rp \to \R^p$) dimensions is possible, if~$m > p$. 
In the case~$m < p$ (less inputs than outputs) with~$\rk \Gamma = \rk \tilde \Gamma = m$ one would require the matrix product~$\Gamma \tilde \Gamma^\dagger \in \R^{p \times p}$ ($\tilde \Gamma^\dagger$ denotes a pseudoinverse of~$\tilde \Gamma$) to be strictly positive definite; however, since $\rk(\Gamma \tilde \Gamma^\dagger) \le \min\{ m,p \} < p$ only positive semi-definiteness can be demanded which is not sufficient.
If~$ m > p$ (more inputs than outputs) the proof of Theorem~\ref{Thm:FPC-min-phase} can be adapted such that the statement is still true in the following two cases:
\begin{enumerate}[label = (\roman{enumi}), leftmargin=*]
\item Known $m-p$ entries of the input are set to zero, w.l.o.g. the last~$m-p$. 
Therefore, $\Gamma u(t) = \Gamma_p (u_1(t), \ldots, u_p(t))^\top$, where $\Gamma_p = \Gamma_p^\top \in \R^{p \times p}$ and sign definiteness is required.
Then, conditions~\ref{Ass:Gam-tildeGam}~\&~\ref{Ass:G} have to be satisfied for $\Gamma_p$ and $\tilde \Gamma_p$.
\item The system itself ignores known $m-p$ entries of the input (w.l.o.g. the last~$m-p$), i.e., $\Gamma = [\Gamma_p, 0]$, where $\Gamma_p = \Gamma_p^\top \in \R^{p \times p} $ and sign definiteness is required. 
Then, $\Gamma u(t) = \Gamma_p (u_1(t),\ldots,u_p(t))^\top$.
Choosing $\tilde \Gamma = [\tilde \Gamma_p, 0]$ with $\tilde \Gamma_p = \tilde \Gamma_p^\top > 0$, conditions~\ref{Ass:Gam-tildeGam}~\&~\ref{Ass:G} have to be satisfied for $\Gamma_p$ and $\tilde \Gamma_p$. 
\end{enumerate}
In both cases the respective transformations in \emph{Step 1} of the proof are feasible and the proof of Theorem~\ref{Thm:FPC-min-phase} can be done with corresponding matrices $\Gamma_p$ and $ \tilde \Gamma_p$.
\end{Remark}
%}

\section{Output feedback control} \label{Sec:Tracking}
In this section we discuss the combination of the funnel pre-compensator with feedback control schemes.
Theorem~\ref{Thm:FPC-min-phase} yields that the conjunction of a minimum phase system~\eqref{eq:System} 
with a cascade of funnel pre-compensators~\eqref{eq:FPC-cascade} is again a minimum phase system and hence amenable to funnel control; for funnel control schemes for systems with higher relative degree see e.g.~\cite{IlchRyan07,BergLe18a} and the recent work~\cite{BergIlch21}.
We show that the combination of the funnel pre-compensator with a funnel control scheme achieves output tracking with prescribed transient behaviour of the tracking error via output feedback only.
This resolves the long-standing open problem that for the application of funnel controller to systems with higher relative degree the derivatives of the output are required to be known.

\ \\
Compared to the control scheme in~\cite{BergLe18a} - involving high derivatives of virtual error variables which need to be calculated before implementation - the control scheme from~\cite{BergIlch21} is much easier to implement.  
Therefore, along with other benefits (see Remark~\ref{Rem:aspects-FC} below) we recall the funnel control scheme from~\cite[Thm. 1.9]{BergIlch21}.
For $e(t) := z(t) - y_{\rm ref}(t) \in \R^m$ define $\textbf{e}(t) := (e^{(0)}(t)^\top,\ldots,e^{(r-1)}(t)^\top )^\top \in \R^{rm} $, this is, the instant error vector between the signal~$z$ and the reference signal~$y_{\rm ref}$ and their derivatives, respectively.
Next, the control parameters are chosen. The funnel function~$\phi$ belongs to the set
\begin{equation*}
\Phi_{\rm FC} := \setdef{ \phi \in \text{AC}_{\rm loc}(\rp \to \R)}{
\begin{array}{l}
\forall\,s >0 \,:\ \phi(s)>0, \ \liminf_{s \to \infty} \phi(s) >0, \\
\exists\, c>0\,:\ |\dot \phi(s)| \le c \, (1+\phi(s)) \text{ for a. a. } s\ge 0 
\end{array} }, 
\end{equation*}
where~$\text{AC}_{\rm loc}(\rp \to \R)$ denotes the set of locally absolutely continuous functions~$f:\rp \to \R$; % and ``a.a." abbreviates ``almost all"; 
$N \in \cC(\rp \to \R)$ is a surjection, $\alpha \in \cC^1([0,1), [1,\infty))$ is a bijection, and for $\cB := \setdef{w \in \R^m}{\|w\| < 1}$ the function ${\gamma : \cB \to \R^m, \ w \mapsto \alpha(\|w\|^2)w}$ is defined.
Further, recursively the maps $\rho_k : \cD_k \to \cB$, $k=1,\ldots,r$ are defined as follows
\begin{equation*}
\begin{aligned}
\cD_1 &:= \cB, \ \rho_1: \cD_1 \to \cB, \ \eta_1 \mapsto \eta_1, \\
\cD_k &:= \setdef{ (\eta_1^\top,\ldots,\eta_k^\top)^\top \in \R^{km}}{ 
\begin{array}{l}
(\eta_1^\top,\ldots,\eta_{k-1}^\top)^\top \in \cD_{k-1}, \\
\eta_k + \gamma \left(\rho_{k-1}(\eta_1^\top,\ldots,\eta_{k-1}^\top)^\top \right) \in \cB
\end{array}}, \\
\rho_k &:  \cD_k \to \cB, \ (\eta_1^\top,\ldots,\eta_k^\top)^\top \mapsto \eta_k + \gamma \left(\rho_{k-1}(\eta_1^\top,\ldots,\eta_{k-1}^\top)^\top\right).
\end{aligned}
\end{equation*}
Then, the funnel control scheme from~\cite[Sec. 1.4, Eqt. (9)]{BergIlch21} is given by
\begin{equation} \label{eq:FC}
u(t) = (N \circ \alpha)(\|w(t)\|^2) w(t), \ w(t) := \rho_r(\vp(t) \textbf{e}(t)).
\end{equation}
\begin{Remark}
Since by~\ref{Ass:Gam-tildeGam} we have~$\tilde \Gamma > 0$ according to~\cite[Rem.~1.8.(b)]{BergIlch21} the simple choice~$N(s) = -s$ is feasible.
Moreover, similar to the gain functions~$h_i$ in~\eqref{eq:FPC-cascade} we may choose~$\alpha(s) = 1/(1-s)$ by which the control scheme is given by
\begin{equation} \label{eq:FC-simple}
u(t) = - \frac{w(t)}{1 - \|w(t)\|^2}, \quad w(t) := \rho_r(\vp(t) \textbf{e}(t)).
\end{equation}
\end{Remark}

\ \\
Now, if the reference trajectory satisfies
\begin{equation} \label{eq:regularity_ref}
y_{\rm ref} \in \cW^{r,\infty}(\rp \to \R^m), 
% \quad  \phi_i \in \Phi_{r-i}, \ i=0,\ldots,r-1
\end{equation}
we have the following result.
\begin{Corollary} \label{Cor:Output-tracking-via-FPC}
Consider a system~\eqref{eq:System} with~$(d,f,T,\Gamma) \in \cN^{m,r}_1$ and $y^0 \in \cW^{r-1,\infty}([-\tau,0] \to \R^m)$ in conjunction with a cascade of funnel pre-compensators~\eqref{eq:FPC-cascade} with~$(a,p,\vp,\vp_1,\rho,\tilde \Gamma) \in \Sigma$. 
Furthermore, assume the initial conditions~\eqref{eq:inital} in Theorem~\ref{Thm:FPC-min-phase} are satisfied.
Moreover, let $\phi \in \Phi_{\rm FC}$ and assume that for the pre-compensator's output~$z := z_{r-1,1}$ the funnel control initial value constraint 
\begin{equation*}
\phi(0) \textbf{e}(0) \in \cD_r, \ e(t) := z(t) - y_{\rm ref}(t), \ \textbf{e}(t) := (e(t)^\top,\ldots,e^{(r-1)}(t)^\top)^\top
\end{equation*}
is satisfied.
Then, if~\eqref{eq:regularity_ref} is satisfied, the funnel controller~\eqref{eq:FC}, with input $z = z_{r-1,1}$ from~\eqref{eq:FPC-cascade}, applied to system~\eqref{eq:System} yields an initial-value problem, which has a solution, and every solution can be extended to a maximal solution
$(y,\zeta): [-\tau,\omega) \to \R^{m} \times \R^{rm(r-1)}$, where $\omega \in (0,\infty]$ and $\zeta=(z_{1,1}^\top,\ldots,z_{r-1,r}^\top)^\top \in \R^{m(r-1)r}$, and the maximal solution has the properties
\begin{itemize}
\item[i)] the solution is global, this is $\omega = \infty$,
\item[ii)] 
the input~$u$, 
%the control gain functions $k_1,\ldots,k_{r-1}$, 
the compensator states~$\zeta$, the pre-compensator gain functions $h_1,\ldots,h_{r-1}$, and the original system's output and its derivatives $y,\dot y,\ldots,y^{(r-1)}$ are bounded, this is, for all $i=1,\ldots,r-1$ we have
$u \in \cL^\infty(\rp \to \R^m)$, 
%$k_i \in \cL^\infty(\rp \to \R)$,
$\zeta \in \cL^\infty(\rp \to \R^{m(r-1)r})$,
$h_i \in \cL^\infty(\rp \to \R)$,
$y \in \cW^{r, \infty}(\rp \to \R^m)$,
\item[iii)] the errors evolve in their respective performance funnels, this is
\begin{equation*}
\begin{aligned}
\exists\, \ve_1 > 0 \ \forall\, t > 0: \ & \|y(t) - z_{1,1}(t) \| < \vp_1(t)^{-1} - \ve_1 , \\
\forall\, i=2,\ldots,r-1 \ \exists\, \ve_i > 0 \ \forall\, t > 0: \ & \|z_{i-1,1}(t) - z_{i,1}(t) \| < \vp(t)^{-1} - \ve_i , \\
%\forall\, i=1,\ldots,r-1 \ \exists\, \delta_i > 0 \ \forall\, t > 0: \ & \|e_i(t) \| < \phi_i(t)^{-1} - \delta_i.
\exists\, \beta > 0 \ \forall\, t > 0: \ & \|z(t) - y_{\rm ref}(t) \| < \phi(t)^{-1} - \beta.
\end{aligned}
\end{equation*}
\end{itemize}
In particular, with $\ve := \sum_{i=1}^{r-1} \ve_i$, the tracking error~$y - y_{\rm ref}$ evolves within a prescribed funnel, this is
\begin{equation*}
\forall \, t > 0: \ \|y(t) - y_{\rm ref}(t) \| < (\rho+r-2)\vp(t)^{-1} + \phi(t)^{-1} - (\ve + \beta) . 
\end{equation*}
\end{Corollary}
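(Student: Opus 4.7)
The plan is to chain the two main ingredients already in hand: Theorem~\ref{Thm:FPC-min-phase} turns the original plant plus cascade into a single minimum phase system with the same relative degree and \emph{explicitly accessible} output derivatives, after which the funnel controller from \cite[Thm.~1.9]{BergIlch21} handles output tracking for this reduced system. So the proof is essentially a composition, and the work is in checking that the hypotheses line up cleanly.

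First, I would invoke Theorem~\ref{Thm:FPC-min-phase} to rewrite the conjunction of~\eqref{eq:System} with the cascade~\eqref{eq:FPC-cascade} equivalently as~\eqref{eq:ddt_r-z}, i.e., as a system $(\tilde d,\tilde F,\tilde T,\tilde\Gamma)\in\cN^{m,r}$ with output $z=z_{r-1,1}$ whose first $r-1$ derivatives are available in closed form from the cascade states (cf.\ Figure~\ref{Fig:Dependence-on-all-states}). The standing assumptions $(d,f,T,\Gamma)\in\cN^{m,r}_1$, $(a,p,\vp,\vp_1,\rho,\tilde\Gamma)\in\Sigma$, and the initial conditions~\eqref{eq:inital} are exactly the hypotheses of that theorem, so no additional verification is needed here.

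Next, I would feed this reduced system together with the reference $y_{\rm ref}$ into the funnel control framework. Since $\tilde\Gamma=\tilde\Gamma^\top>0$ by~\ref{Ass:Gam-tildeGam}, $\phi\in\Phi_{\rm FC}$, the reference satisfies~\eqref{eq:regularity_ref}, and the funnel-control initial-value constraint $\phi(0)\textbf{e}(0)\in\cD_r$ is imposed in the statement, \cite[Thm.~1.9]{BergIlch21} applies verbatim to~\eqref{eq:ddt_r-z}. It delivers a maximal solution with $\omega=\infty$, boundedness of the input $u$ and of $z,\dot z,\ldots,z^{(r-1)}$, and a strictly positive margin $\beta>0$ such that $\|z(t)-y_{\rm ref}(t)\|<\phi(t)^{-1}-\beta$ for all $t>0$. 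This already gives i), the funnel estimate on $z-y_{\rm ref}$ in iii), and half of ii).

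Then I would recover the pre-compensator error estimates iteratively: the funnel transient bound~\eqref{eq:transient-y-z} is obtained stage by stage via \cite[Prop.~1]{BergReis18b}, yielding $\|y-z_{1,1}\|<\vp_1^{-1}-\ve_1$ and $\|z_{i-1,1}-z_{i,1}\|<\vp^{-1}-\ve_i$ for $i=2,\ldots,r-1$. Simultaneously this certifies boundedness of the compensator state $\zeta$ and of the gain functions $h_1,\ldots,h_{r-1}$ (the latter being exactly what step~2b of the proof of Theorem~\ref{Thm:FPC-min-phase} establishes). A telescoping triangle inequality together with $\vp_1=\vp/\rho$ from~\ref{Ass:funnel-functions} gives $\|y-z\|<(\rho+r-2)\vp^{-1}-\ve$ with $\ve:=\sum_{i=1}^{r-1}\ve_i$, and one further triangle inequality produces the claimed overall tracking bound on $\|y-y_{\rm ref}\|$.

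It remains to show $y\in\cW^{r,\infty}(\rp\to\R^m)$. Boundedness of $y$ itself is immediate from the bounds on $z$ and $\|y-z\|$; for the derivatives, one reads off from the structure of~\eqref{eq:FPC} and Figure~\ref{Fig:Dependence-on-all-states} that $\dot y,\ldots,y^{(r-1)}$ are expressible through the bounded cascade states and the bounded gains $h_i$, and finally the system equation~\eqref{eq:System} together with bounded $u$ and bounded $T(y,\dot y,\ldots,y^{(r-1)})$ (via~\ref{T:BIBO}, which is usable since $y$ is bounded) yields $y^{(r)}\in\cL^\infty$. The main obstacle is bookkeeping rather than new analysis: all the hard analytic work (Lyapunov estimates for the cascade errors, boundedness of the gain functions, preservation of minimum phase) is already inside Theorem~\ref{Thm:FPC-min-phase}, and the funnel-control step is a direct citation; care is needed chiefly in aligning the two parameter worlds and in tracing how bounded $\zeta$ and bounded $z$-derivatives propagate to bounded $y$-derivatives.
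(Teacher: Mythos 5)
Your proposal is correct and follows essentially the same route as the paper's proof: invoke Theorem~\ref{Thm:FPC-min-phase} to view the plant-plus-cascade as a system $(\tilde d,\tilde F,\tilde T,\tilde\Gamma)\in\cN^{m,r}$ (which also supplies the boundedness of $\zeta$ and $h_i$), then apply \cite[Thm.~1.9]{BergIlch21} to obtain the remaining assertions, and conclude the overall tracking bound from the funnel estimates via the triangle inequality. The extra bookkeeping you add on $y\in\cW^{r,\infty}$ is consistent with what the paper subsumes under the cited results.
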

\begin{proof}
Theorem~\ref{Thm:FPC-min-phase} yields that a system~\eqref{eq:System} with~$(d,f,T,\Gamma) \in \cN^{m,r}_1$ in conjunction with a cascade of funnel pre-compensators~\eqref{eq:FPC-cascade} with $(a,p,\vp,\vp_1,\rho,\tilde \Gamma) \in \Sigma$
results in a minimum phase system with~$(\tilde d, \tilde F, \tilde T, \tilde \Gamma) \in \cN^{m,r}$, and the respective aspects of assertions~$ii)~\&~iii)$, namely concerning $\zeta, h_i$, are true; hence the conjunction belongs to the system class under consideration in~\cite{BergIlch21}.
Then~\cite[Thm. 1.9]{BergIlch21} is applicable and yields the remaining aspects of assertions~$i)-iii)$.
Furthermore, the transient behaviour of the tracking error~$y-y_{\rm ref}$ is a direct consequence of~$iii)$.
\qed
\end{proof}

\begin{Remark} %\label{Rem:linear-systems}
Recalling Remark~\ref{Rem:linear-min-phase-satisfy-a-prime} we highlight that Corollary~\ref{Cor:Output-tracking-via-FPC} guarantees for a large class of systems output tracking with prescribed transient behaviour of the tracking error 
via output feedback only; note that this result applies to single-input, single-output systems as well as to multi-input, multi-output systems.
The controller proposed in~\cite{DimaBech20} as well achieves output tracking with prescribed performance of the error via output feedback for minimum phase multi-input, multi-output systems of arbitrary relative degree; in particular, this controller as well as the control scheme~\eqref{eq:FC} is applicable to linear minimum phase systems. 
However, as it involves a high-gain observer structure, the controller from~\cite{DimaBech20} suffers from the problem of proper initializing, i.e., some parameters have to be chosen large enough in advance, however, it is not clear how large.
Contrary, conditions~\ref{Ass:A-Hurwitz}~--~\ref{Ass:G} explicitly determine the set~$\Sigma$ of feasible design parameters of the funnel pre-compensator.
This resolves a long-standing problem in the field of high-gain based output feedback control with prescribed transient behaviour.
\end{Remark}

\begin{Remark}
An application of the funnel pre-compensator to linear non-minimum phase system under consideration in~\cite{Berg20} allows output feedback tracking for a certain class of linear non-minimum phase systems with the controller scheme proposed in~\cite[Sec.~3]{Berg20}.
However, in combination with the funnel pre-compensator the bound of the tracking error discussed in~\cite[Sec.~4]{Berg20} is not valid any more.
For deeper insights regarding output tracking of linear non-minimum phase systems see~\cite{Berg20}.
\end{Remark}

\begin{Remark} \label{Rem:aspects-FC}
We highlight two important aspects.
First, according to~\cite[Sec. 1.4]{BergIlch21}, in particular~\cite[Rem. 1.7 (c)]{BergIlch21} tracking of a given reference is also possible, if the number of the available derivatives of the reference signal is smaller than the relative degree. This means the following. Let~$\hat r $ be the number of derivatives of the reference signal~$y_{\rm ref}$ available for the feedback controller. Then with the control scheme~\eqref{eq:FC} output tracking with prescribed transient behaviour of the tracking error is possible in the case~$\hat r < r$.
Second, in the case~$\hat r = r$ exact asymptotic tracking can be achieved, see~\cite[Rem.~1.7 (f)]{BergIlch21}.
In the present context this means $\lim_{t \to \infty} (z(t)-y_{\rm ref}(t)) = 0$, however, $\lim_{t \to \infty} (y(t) - z(t)) = 0$ cannot be guaranteed since~$\vp \in \Phi_r$. 
Note that, while the second aspect is of limited practical interest (see also~\cite[Sec.~1.1]{BergIlch21}), the first aspect allows, for instance, target tracking of a given ``smooth" trajectory, the derivatives of which are as unknown as the derivatives of the system.
\end{Remark}

\begin{Remark}
Remark~\ref{Rem:k-derivatives-known} applies also for output tracking, i.e., if the first~${k \le r-1}$ derivatives of the output signal~$y(t)$ are known, the pre-compensator takes the signal~$y^{(k)}(t)$ which results in a tighter funnel boundary of the tracking error~${y - y_{\rm ref}}$. 
Moreover, in this case we only require~$T \in \cT^{{rm},q}_{\sigma,k}$, this is, the funnel pre-compensator is applied to systems with~$(d,f,T,\Gamma) \in \cN^{m,r}_k$.
\end{Remark}

\section{Simulations} \label{Sec:Simulations}
In this section we provide simulations of the funnel pre-compensator and its applications. 
First, we consider the pure functionality of the funnel pre-compensator applied to signals~$(u,y) \in \cP_r$.
Second, we simulate output tracking via funnel control. %, cf.~\cite[Thm.~1.9]{BergIlch21}. 
To this end, we apply a funnel control scheme to the conjunction of a cascade of funnel pre-compensators with a minimum phase system.

\begin{Example} \label{Ex:functionality}
We give an illustrative example how the funnel pre-compensator works. 
Moreover, we qualitatively compare the influence of the pre-compensator's parameters to its performance.
We choose the signals
\begin{equation*}
y(t) = e^{-(t-5)^2},
%y(t) = \begin{cases} -\tfrac{1}{2}(t-5)^2, & 0\le t \le 5, \\ 
%\ \ \tfrac{1}{2}(t-5)^2, & 5 \le t ,
%\end{cases} 
\quad u(t) = \sin(t),
\end{equation*}
and, since $(u,y) \in \cC^\infty(\rp \to \R) \times \cC^\infty(\rp \to \R)$, we choose to simulate the application of the cascade of funnel pre-compensators~\eqref{eq:FPC-cascade} for the case~$r=3$.
For the funnel pre-compensator we initially choose a Hurwitz polynomial, i.e., a polynomial whose roots have strict negative real part,
by which the matrix~$A \in \R^{3 \times 3}$ from~\ref{Ass:A-Hurwitz} is determined via the polynomial's coefficients, cf. Remark~\ref{Rem:Constants}.
Since it turned out that the parameters~$a_i$ influence the pre-compensator's performance the most we compare three sets of these parameters. 
To this end, we choose the Hurwitz polynomials
\begin{equation*}
\begin{aligned}
(\alpha + 1)^3 &= \alpha^3 + 3\alpha^2 + 3\alpha + 1, \\
(\beta + 3)^3 &= \beta^3 + 9 \beta^2 + 27\beta + 27,\\
(\gamma + 5)^3 & = \gamma^3 + 15\gamma^2 + 75\gamma + 125
\end{aligned}
\end{equation*}
by which we obtain corresponding matrices
\begin{equation*}
A = \begin{bmatrix}
-3 & 1 & 0 \\
- 3 & 0 & 1 \\
-1 & 0 & 0
\end{bmatrix}, \qquad
B = \begin{bmatrix}
-9 & 1 & 0 \\
- 27 & 0 & 1 \\
-27 & 0 & 0
\end{bmatrix}, \qquad
C= \begin{bmatrix}
-15 & 1 & 0 \\
- 75 & 0 & 1 \\
-125 & 0 & 0
\end{bmatrix}
\end{equation*}
which means~$a_1 = 3$, $a_2 = 3$, $a_3 = 1$, 
$b_1 = 9$, $b_2 = 27$, $b_3 = 27$,
and $c_1 = 15$, $c_2 = 75$, $c_3 = 125$.
Choosing~$Q := I_3$ the respective Lyapunov matrix~$P_i$, ${i\in \{A,B,C\}}$ is given as
\begin{equation*}
P_A = \begin{bmatrix}
1 & -\frac{1}{2} & -1 \\
-\frac{1}{2} & 1 & -\frac{1}{2} \\
-1 & -\frac{1}{2} & 4
\end{bmatrix}, \qquad
P_B = \begin{bmatrix}
       4     &        -\frac{1}{2} & -\frac{22}{27} \\    
       -\frac{1}{2}  & \frac{22}{27} &        -\frac{1}{2}    \\
     -\frac{22}{27}    &       -\frac{1}{2}    &       \frac{61}{81} 
\end{bmatrix} \quad
P_C = \begin{bmatrix}
\frac{58}{5} & -\frac{1}{2} & -\frac{136}{125} \\
- \frac{1}{2} & \frac{136}{125} & -\frac{1}{2} \\
-\frac{136}{125} & -\frac{1}{2} & \frac{1333}{3125}
\end{bmatrix},
\end{equation*}
from which~$p_1^a = 1$, $p_2^a = 2/3$, $p_3^a = 1/3$ ,
$p_1^b = 1$, $p_2^b = 1037/481$, $p_3^b = 1787/711$,
and~$p_1^c = 1$, $p_2^c = 1383/391$, $p_3^c = 2230/333$.
Further, we choose the funnel function~$\vp(t) = (e^{-2t} + 0.05)^{-1}$, and~$\vp_1(t) = \vp(t)/\rho$ for~$\rho = 1.5$.
Finally, we choose $z_{i,j}(0) = 0$ for all~$i=1,2$, $j=1,2,3$ by which the conditions on the initial values in~\cite[Thm.~1]{BergReis18b} are satisfied.
We run the simulation over the time interval~$0 - 10$ seconds.
The outcome is depicted in Figure~\ref{Fig:FPC-to-Sig}.
\begin{figure}[h!]
\begin{subfigure}[t]{0.49\textwidth}
\includegraphics[width=\textwidth]{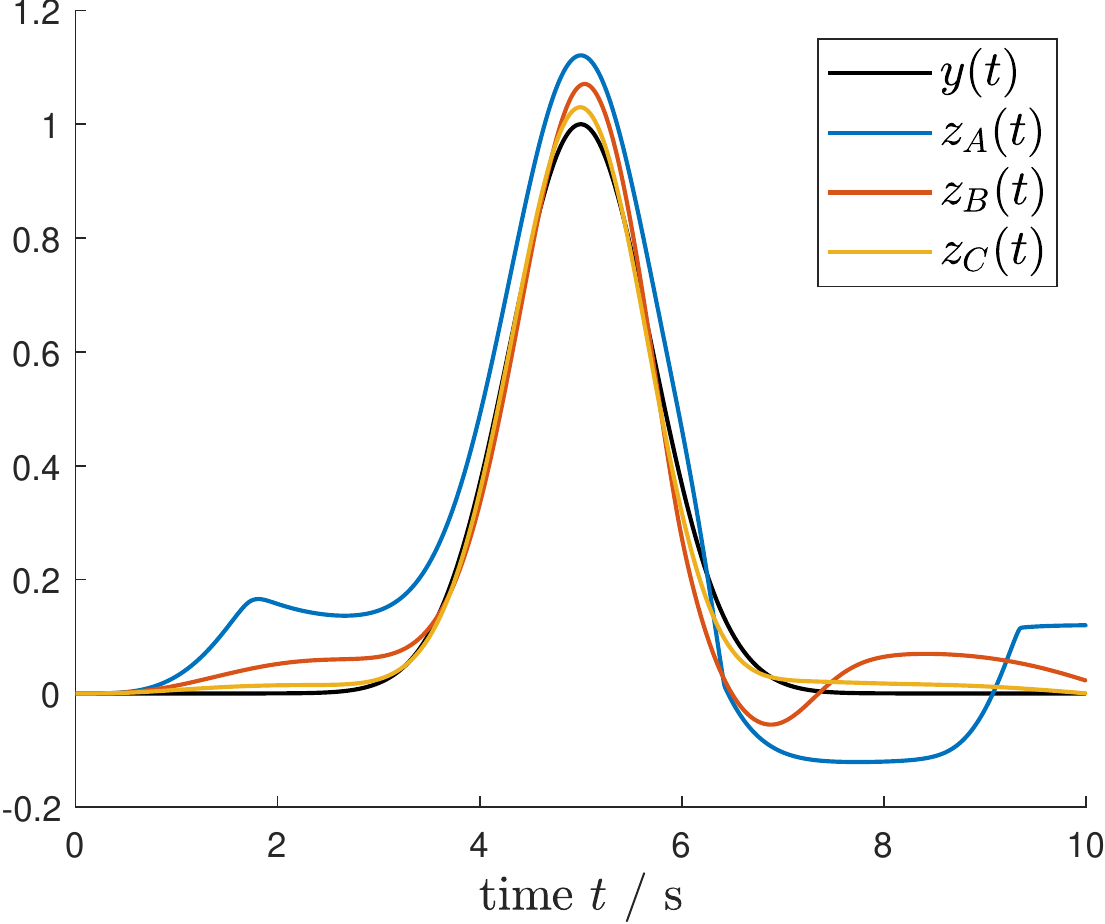}
\caption{Signal~$y(t)$ and the pre-compensator's output for different choices of parameters, respectively.}
\label{Fig:FPC-to-Sig-States}
\end{subfigure}
\hfill
\begin{subfigure}[t]{0.49\textwidth}
\includegraphics[width=\textwidth]{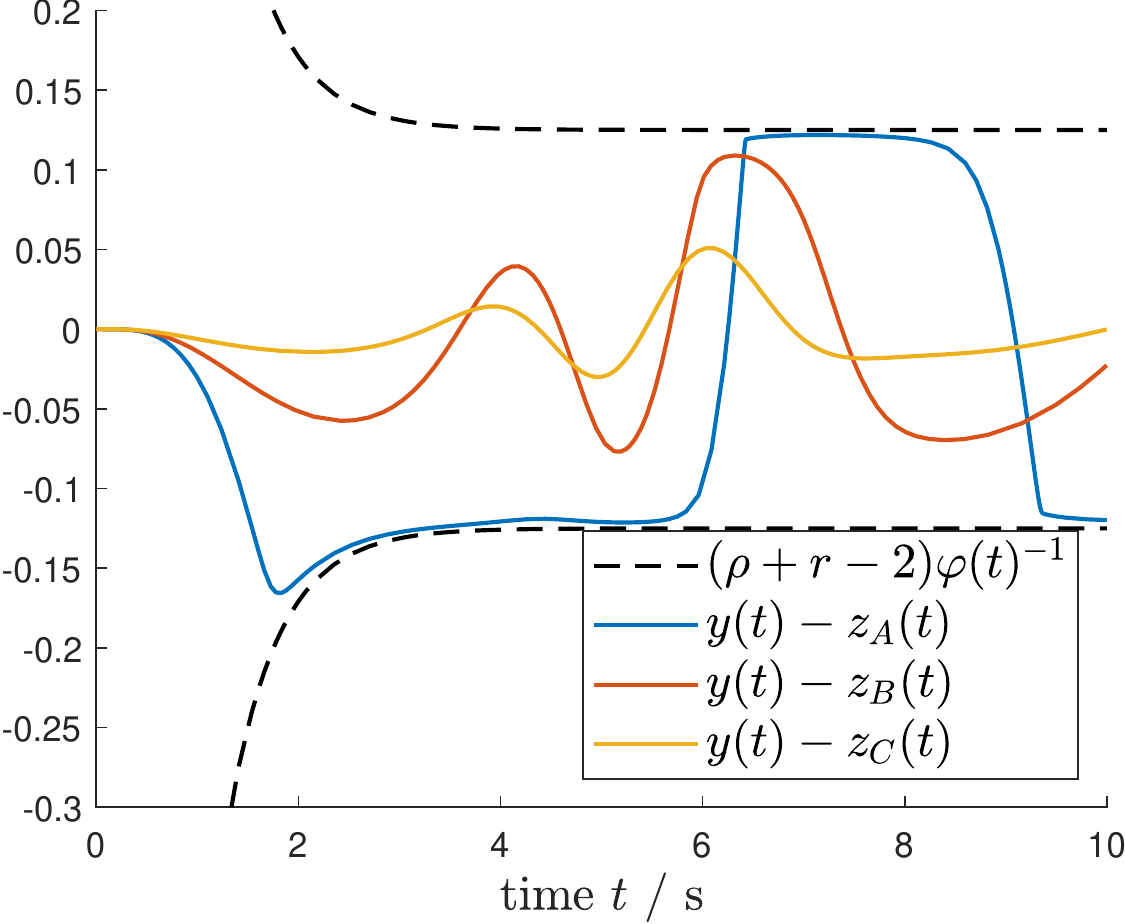}
\caption{Errors between the signal~$y(t)$ and the pre-compensator's output for different choices of parameters, respectively.}
\label{Fig:FPC-to-Sig-Errors}
\end{subfigure}
\caption{Simulation of functionality of the funnel pre-compensator applied to given signals~$(u,y) \in \cP_3$.}
\label{Fig:FPC-to-Sig}
\end{figure}
Figure~\ref{Fig:FPC-to-Sig-States} shows the signal~$y$ and the pre-compensator's output~$z_A, z_B, z_C$, respectively.
The ``quality" of the approximation depends strongly on the choice of the parameters~$a_i$ satisfying~\ref{Ass:A-Hurwitz}.
While in the first case the signal~$y$ and~$z_A$ differ quite much, in the second case the approximation is much better, and in the third case the signal~$y$ and the output~$z_C$ are almost identical which means that the approximation of the given signal by the funnel pre-compensator is pretty good.
Figure~\ref{Fig:FPC-to-Sig-Errors} shows the error between the signal~$y$ and the pre-compensator's output, respectively.
Here the aforesaid crystallises from the viewpoint of errors, which are quite different. 
However, we highlight that in all three cases the error evolves within the prescribed funnel boundaries and hence all approximations~$z_A, z_B, z_C$ of the signal~$y$ are at least as good as a ``predetermined quality".
The simulation has been performed in \textsc{Matlab} (solver: \textsf{ode23tb}).
\end{Example}

\ \\
%\subsection{Output tracking} \label{Sec:Sim:output-tracking}
Next, we apply the funnel control scheme~\eqref{eq:FC-simple} to the conjunction of a cascade of funnel pre-compensators~\eqref{eq:FPC-cascade} with a minimum phase system~\eqref{eq:System} 
to achieve output tracking with prescribed transient behaviour of the tracking error via output feedback only, i.e., we illustrate an application of Corollary~\ref{Cor:Output-tracking-via-FPC}.
%To this end, we apply a cascade of funnel pre-compensators to a given system and perform funnel based output tracking.
We emphasize that the funnel pre-compensator receives only the measurement of the output signal~$y$ of the system under consideration.
Then, the applied controller~\eqref{eq:FC-simple} takes the pre-compensator's output~$z$ and its derivatives which are known explicitly.

\begin{Example}  \label{Ex:tracking}
%\red{{
Since the application of the funnel pre-compensator to the standard illustrative example \textit{mass on a car system} from~\cite{SeifBlaj13} was already discussed in detail in~\cite[Sec. 5.1]{BergReis18b}, and the application of the controller~\eqref{eq:FC} to this particular example was elaborated in~\cite[Sec. 3.1]{BergIlch21}, we consider the following artificial, in particular, nonlinear multi-input multi-output ODE of relative degree~$r=3$ with~$m=2$ and initial conditions $y|_{[-\tau,0]} \equiv 0 \in \R^2$ for some~$\tau >0$, %$\ddot y(0) =  \dot y(0) = y(0) = 0$,
\begin{equation} \label{eq:Sim:System}
\begin{aligned}
y^{(3)}(t) &= R_1 y(t) + R_2 \dot y(t) + R_3 \ddot y(t) +  f\left( d(t), T(y, \dot y, \ddot y)(t) \right) + \Gamma u(t), 
\end{aligned}
\end{equation}
where
\begin{equation*}
 \begin{array}{llll}
R_1 = \begin{bmatrix}
-1 & 0 \\ 0 & 0
\end{bmatrix}, &
R_2 = \begin{bmatrix}
1 & -1 \\ 0 & 0
\end{bmatrix}, &
R_3 = \begin{bmatrix}
1 & 1 \\ 0 & -1
\end{bmatrix}, &
\Gamma = \begin{bmatrix}
2 & 0.2\\ 0.2 & 2
\end{bmatrix} = \Gamma^\top > 0  ,
 \end{array}
\end{equation*} 
and for~$d = (d_1,d_2)^\top$,~$\xi_{i} = (\xi_{i,1},\xi_{i,2})^\top$, $i=1,2,3$
\begin{equation*}
\begin{aligned}
T : \cC([-\tau,\infty) \to \R^6)   &\to \cL_{\rm loc}^{\infty}(\rp \to \R^3), \\
 (\xi_1(\cdot), \xi_2(\cdot), \xi_3(\cdot))  &\mapsto 
\left(t \mapsto
 \begin{pmatrix}
\xi_{1,1}(t)^2 + e^{\xi_{1,1}(t) - |\xi_{2,1}(t)|}  \\
\xi_{1,2}(t)^3  - \sin(\xi_{2,2}(t))  \\
\int_{0}^{t} e^{-(t-s)}\|\xi_1(t)\|^2\tanh(\|\xi_3(t)\|^2) \, \ds 
\end{pmatrix} \right), \\
f: \R^2 \times \R^3  & \to \R^2, \\ 
(d_1,d_2, \zeta_1, \zeta_2 , \eta) & \mapsto \begin{pmatrix}
d_1 + \zeta_1 + \eta^3 \\ d_2 + \zeta_2 - \eta
\end{pmatrix} \\
\end{aligned}
\end{equation*}
whereby the internal dynamics are bounded-input bounded-state stable and the associated operator~$T$ belongs to the class~$\cT^{6,3}_{\tau,1}$.
The disturbance is chosen 
%as~$d_1(t) =  0.2 \sin(5t) + 0.2 \cos(7t)$ and~$d_2(t) = 0.25 \sin(9t) $ 
as~$d : \rp \to \R^2$, $ t \mapsto (0.2 \sin(5t) + 0.2 \cos(7t), \, 0.25 \sin(9t) + 0.2 \cos(3t))^\top$
by which $d \in \cL^\infty(\rp \to \R)$ and hence~$(d,f,T,\Gamma) \in \cN^{2,3}_1$.
For the funnel pre-compensator we choose the Hurwitz polynomial~$(s+s_0)^3$ with~$s_0 = 7$  
and~$Q = I_3$ to determine matrices~$A$ and~$P$ satisfying~\ref{Ass:A-Hurwitz} and obtain the respective 
parameters~$a_1 = 21$, $a_2 = 147$, $a_3 = 343$
and~$p_1 = 1$, $p_2 = 1180/241$, $p_3 = 1742/135$. 
We choose the pre-compensator's funnel function
$\vp(t) = (e^{-3t} + 0.05 )^{-1}$, and for~$\tilde \Gamma = 2 \cdot I_2$ with~$\rho = 1.1$ conditions~\ref{Ass:Gam-tildeGam}~\&~\ref{Ass:G} are satisfied. 
We stress that~$\Gamma$ and~$\tilde \Gamma$ are quite different; while $\Gamma$ distributes both input signals~$(u_1,u_2)$ to both output directions~$(y_1,y_2)$, $\tilde \Gamma$ only allocates the input signal~$u_i$ to output direction~$y_i$, $i=1,2$.
Next, we choose the controller's funnel function~$\phi_{\rm FC}(t) = (2e^{-t} + 0.05 )^{-1}$.
Then, with $z_{i,j}(0) = 0$, $i=1,2$, $j=1,2,3$ the assumptions on the initial values from Corollary~\ref{Cor:Output-tracking-via-FPC} are satisfied.
Further, we choose the reference trajectory as $y_{\rm ref}: \rp \to \R^2$, $t \mapsto (e^{-(t-5)^2}, \, \sin(t) )^\top$.
We simulate the output tracking over a time interval~$0-10$ seconds. 
In order to illustrate the funnel pre-compensator's contribution we compare the two cases, first, if the derivatives of the output of system~\eqref{eq:Sim:System} are available to the controller and second, if not.
The outcomes of the simulations are depicted in Figure~\ref{Fig:FPC-Track}, where the output's subscript~$\rm FC$~($y_{\rm FC}$) denotes the case when the derivatives of the system are available, i.e., the funnel pre-compensator is not necessary and hence not present; and the subscript~$\rm FPC$~($y_{\rm FPC}$) indicates the situation when the system's output is approximated by the pre-compensator and the derivatives of the latter are handed over to the controller.
\begin{figure}[h!]
\begin{subfigure}[t]{0.49\textwidth}
\includegraphics[width=\textwidth]{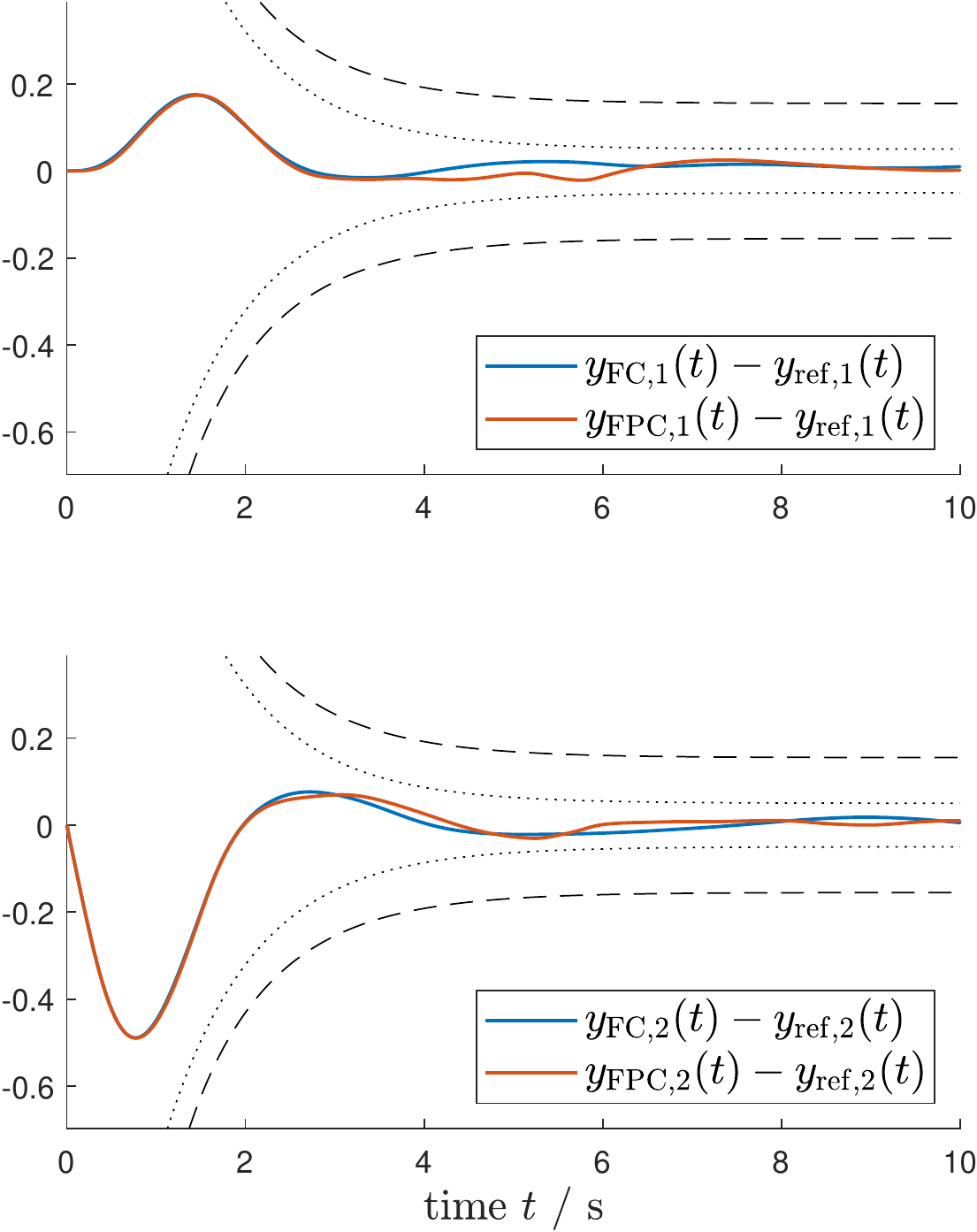}
\caption{Tracking error~$y(t) - y_{\rm ref}(t)$. The dashed line~$- - -$ represents the funnel boundary given by $(\rho + r - 2)/\vp(t) + 1/\phi_{\rm FC}(t)$, the dotted line~$\cdots$ represents the funnel boundary given by~$1/\phi_{\rm FC}(t)$.}
\label{Fig:FPC-Track-Error}
\end{subfigure}
\hfill 
\begin{subfigure}[t]{0.49\textwidth}
\includegraphics[width=\textwidth]{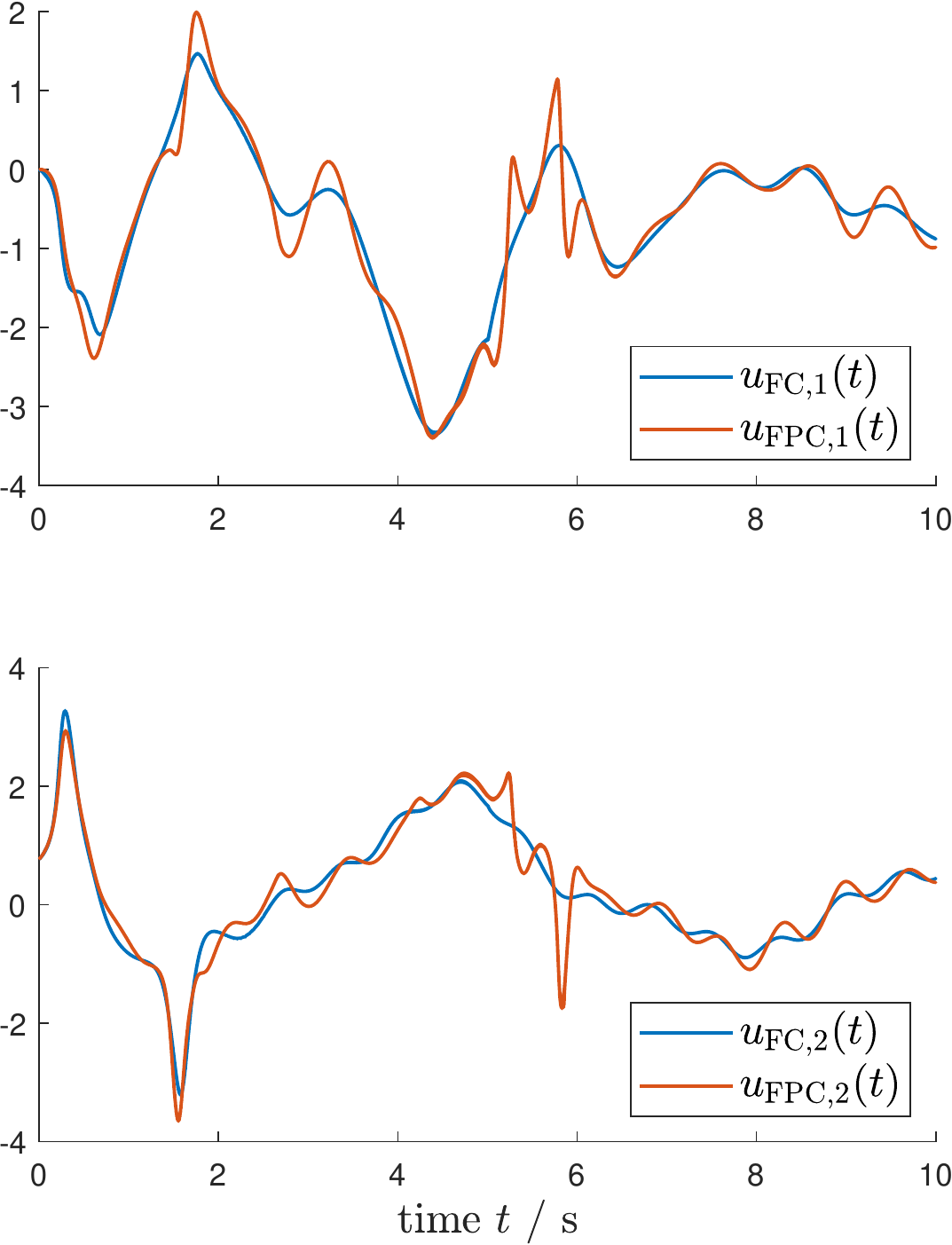}
\caption{Input signal~$u$ generated by the funnel control scheme~\eqref{eq:FC-simple}.}
\label{Fig:FPC-Track-Control}
\end{subfigure}
\caption{Output reference tracking of the nonlinear multi-input multi-output minimum phase system~\eqref{eq:Sim:System} via output feedback.}
\label{Fig:FPC-Track}
\end{figure}
Figure~\ref{Fig:FPC-Track-Error} shows the tracking error between the system's output~$y_i$ and the reference trajectory~$y_{\rm ref,i}$, $i=1,2$, in both cases, respectively.
Note that in the case when the derivatives of the system's output are available the error evolves within the funnel boundaries defined by~$1/\phi_{FC}$ as expectable from the results in~\cite[Thm. 1.9]{BergIlch21}.
We emphasize that in the second case (the derivatives of the system's output are not available) the transient behaviour of the tracking error can be guaranteed to evolve within the boundaries $ (\rho + r-2)/\vp + 1/\phi_{\rm FC} = {(\rho+1)/\vp + 1/\phi_{\rm FC}}$, where~$r=3$ and~$\rho=1.1$ in this particular example.
Moreover, we can observe that even in the latter case the error evolves within the boundaries of~$1/\phi_{\rm PC}$.
Figure~\ref{Fig:FPC-Track-Control} shows the control input~$u_i$, $i=1,2$, generated by~\eqref{eq:FC-simple}, respectively.
%Both input signals are similar and do not differ much. 
Both signals show oscillations which arise from the influence of the disturbance~$d$, i.e., the controller compensates the disturbance's affect to the system.
Analogously to the approximation performance discussed in the previous Example~\ref{Ex:functionality} the tracking performance (and so the input signal~$u$) strongly depends on the choice of the pre-compensator's parameters, this is, on the choice of~$A$ (for fixed~$Q = I_m$) and can be improved with larger values of~$s_0$. 
The simulations have been performed in \textsc{Matlab} (solver: \textsf{ode45}, rel. tol: $10^{-6}$, abs. tol: $10^{-6}$).
%}}
\end{Example}

\section{Conclusion}
In the present article we showed that the conjunction of the funnel pre-compensator with a minimum phase system of arbitrary relative degree results in a minimum phase system of the same relative degree. This resolves the open question raised in~\cite{BergReis18b} where the funnel pre-compensator was introduced and the aforesaid was proven for the special case of relative degree two.
Using the fact that the derivatives of the pre-compensator's output are known explicitly we showed that output reference tracking with prescribed transient behaviour using funnel based feedback control schemes is possible with output feedback only.
In particular, output tracking with unknown output derivatives is possible for the class of linear minimum phase systems~\eqref{eq:linear-system}; and moreover, for a class of linear non-minimum phase systems (single-input, single output systems as well as multi-input, multi-output systems).
Since the investigations in the recent works~\cite{BergLanz20} and~\cite{BergDrue21} show applicability of existing control techniques to nonlinear non-minimum phase systems we are confident that an integration of the funnel pre-compensator into this particular context will also be fruitful.

\ \\
Now, in future research we will investigate the extension of the present results to a larger class of systems, in particular, systems with~$(d,f,T,\Gamma) \in \cN^{m,r}$  will be focused; furthermore, systems which are not linear affine in the control term will be investigated.

\begin{acknowledgements}
I am deeply indebted to Thomas Berger (University of Paderborn) for excellent mentoring, many fruitful discussions, helpful advices and corrections.
\end{acknowledgements}

\section*{Appendix}
\paragraph{Proof of Theorem~\ref{Thm:FPC-min-phase}.}
%\begin{proof}
Since the special case~$r=2$ was already proven in~\cite[Thm.~2]{BergReis18b}, we concentrate on the case~$r \ge 3$. 
Therefore, in the following let~$r \ge 3$.
%We proceed in several steps. 
%
The proof is subdivided in three steps.

\ \\
\underline{Step 1}: We briefly present the transformations performed in~\cite[p. 4758-4760]{BergReis18b}. 
We consider the error dynamics of two successive systems. Set $v_{i,j}(\cdot) := z_{i-1,j}(\cdot) - z_{i,j}(\cdot)$ for $i=2,\ldots,r-1$ and $j=1,\ldots,r$. Then,
\begin{equation*}
\begin{aligned}
\dot v_{i,1}(t) &= v_{i,2}(t) - (a_1 + p_1 h_i(t)) v_{i,1}(t) + (a_1 + p_1 h_{i-1}(t))v_{i-1,1}(t) , \\
& \ \, \vdots \\
\dot v_{i,r-1}(t) &= v_{i,r}(t) - (a_{r-1} + p_{r-1} h_{i}(t))v_{i,1}(t) + (a_{r-1} + p_{r-1}h_{i-1}(t))v_{i-1,1}(t) , \\
\dot v_{i,r}(t) &= -(a_r + p_r h_i(t)) v_{i,1}(t) + (a_r + p_r h_{i-1}(t)) v_{i-1,1}(t).
\end{aligned}
\end{equation*}
In order to investigate the dynamics of $v_{1,1}$, we define $e_{1,j}(\cdot) := y^{(j-1)}(\cdot) - z_{1,j}(\cdot)$ for $j=1,\ldots,r-1$, 
and $e_{1,r}(\cdot) := y^{(r-1)}(\cdot) - \Gamma \tilde \Gamma^{-1} z_{1,r}(\cdot)$. Then we obtain
\begin{equation*}
\begin{aligned}
\dot e_{1,1}(t) &= e_{1,2}(t) - (a_1 +  p_1 h_1(t))e_{1,1}(t) , \\
& \ \, \vdots \\
\dot e_{1,r-2}(t) &= e_{1,r-1}(t) - (a_{r-2} + p_{r-2} h_1(t))e_{1,1}(t) , \\
\dot e_{1,r-1}(t) &= e_{1,r}(t) - (a_{r-1} + p_{r-1} h_1(t))e_{1,1}(t) + (\Gamma \tilde \Gamma^{-1} - I_m) z_{1,r}(t) , \\
\dot e_{1,r}(t) &= -\Gamma \tilde \Gamma^{-1}(a_r + p_r h_1(t))e_{1,1}(t) \\
& \qquad + \sum_{i=1}^{r} R_i y^{(i-1)}(t) + f\big( d(t),T(y,\dot y,\ldots,y^{(r-1)})(t) \big).
\end{aligned}
\end{equation*}
Now, we set $v_{1,1}(\cdot) := e_{1,1}(\cdot)$, and define $\tilde v(\cdot) := \sum_{i=1}^{r-1} v_{i,1}(\cdot)$ 
and $v_{1,j}(\cdot) := e_{1,j}(\cdot) - \sum_{k=1}^{j-1} R_{r-j+k+1} \tilde v^{(k-1)}(\cdot)$ 
for $j=2,\ldots,r$. Then we obtain
\begin{equation*}
\begin{aligned}
\dot v_{1,1}(t) &= v_{1,2}(t) - (a_1 + p_1 h_1(t)) v_{1,1}(t) + R_r \tilde v(t) , \\
& \ \, \vdots \\
\dot v_{1,r-2}(t) &= v_{1,r-1}(t) - (a_{r-2} + p_{r-2} h_1(t)) v_{1,1}(t) + R_3 \tilde v(t) , \\
\dot v_{1,r-1}(t) &= v_{1,r}(t) - (a_{r-1} + p_{r-1} h_1(t)) v_{1,1}(t) + R_2 \tilde v(t) + (\Gamma \tilde \Gamma^{-1} - I_m) z_{1,r}(t)  , \\
\dot v_{1,r}(t)  &= -\Gamma \tilde \Gamma^{-1}(a_r + p_r h_1(t))v_{1,1}(t) + R_1 \tilde v(t) \\
& \qquad + \sum_{i=1}^{r} R_i \big( y^{(i-1)}(t)-\tilde v^{(i-1)}(t) \big) 
+ f\big( d(t),T(y,\dot y,\ldots,y^{(r-1)})(t) \big). 
\end{aligned}
\end{equation*}
We record some useful observations
\begin{equation*}
\begin{aligned}
y(t) - \tilde v(t) &= y(t) - \sum_{i=1}^{r-1} v_{i,1}(t) \\
&= y(t) - (y(t) - z_{2,1}(t)) - \ldots - (z_{r-2,1}(t) - z_{r-1,1}(t)) \\
& = z_{r-1,1}(t) = z(t),
\end{aligned}
\end{equation*}
the following relation for $z_{1,r}$
\begin{equation*}
z_{1,r}(t) = z_{1,1}^{(r-1)}(t) - \sum_{k=0}^{r-2} \left( \frac{\text{\normalfont{d}}}{\text{\normalfont{d}} t} \right)^k \Big[ ( a_{r-k-1} + p_{r-k-1} h_{1}(t)) v_{1,1}(t) \Big],
\end{equation*}
and
\begin{equation*}
z_{1,1}(t) = y(t) - v_{1,1}(t) = z(t) + \tilde v(t) - v_{1,1}(t) = z(t) + \sum_{i=2}^{r-1} v_{i,1}(t).
\end{equation*}
Therefore, we have
\begin{equation*}
z_{1,r}(t) = z^{(r-1)}(t) + \sum_{i=2}^{r-1} v_{i,1}^{(r-1)}(t) 
-  \sum_{k=0}^{r-2} \left( \frac{\text{\normalfont{d}}}{\text{\normalfont{d}} t} \right)^k \Big[ ( a_{r-k-1} + p_{r-k-1} h_{1}(t)) v_{1,1}(t) \Big].
\end{equation*}
Now, for $G: = I - \Gamma \tilde \Gamma^{-1}$ and $j=1,\ldots,r-1$ we define $w_{i,j}(\cdot) := v_{i,j}(\cdot)$ for $i=2,\ldots,r-1$ and $j=1,\ldots,r$, 
and $w_{1,r}(\cdot) := v_{1,r}(\cdot)$. Further, for $j=1,\ldots,r$ we set
\begin{equation*}
\begin{aligned}
w_{1,r-j}(t) := v_{1,r-j}(t) &+ G  \sum_{k=2}^{r-1} v_{k,1}^{(r-1-j)}(t) \\
& -G \sum_{k=j}^{r-2} \left( \frac{\text{\normalfont{d}}}{\text{\normalfont{d}} t} \right)^{k-j} \left[ ( a_{r-k-1} + p_{r-k-1} h_{1}(t)) v_{1,1}(t) \right] .
\end{aligned}
\end{equation*}
Next, we investigate the dynamics of $w_{i,j}$. We set $\tilde w(\cdot) := \sum_{i=2}^{r-1} w_{i,1}(\cdot)$, and obtain, for $i=1$
\begin{subequations} \label{eq:ddt_w}
\begin{equation} \label{eq:ddt_w1}
\begin{aligned}
% w_{1,j}
\dot w_{1,1}(t) &= w_{1,2}(t) - \Gamma \tilde \Gamma^{-1} ( a_1 + p_1 h_1(t)) (w_{1,1}(t) - G \tilde w(t)) \\
& \qquad + R_{r} (w_{1,1}(t) + \Gamma  \tilde \Gamma^{-1}\tilde w(t) ) , \\
\dot w_{1,2}(t) &= w_{1,3}(t) - \Gamma \tilde \Gamma^{-1} ( a_2 + p_2 h_1(t)) (w_{1,1}(t) - G \tilde w(t))\\
& \qquad + R_{r-1}(w_{1,1}(t) + \Gamma  \tilde \Gamma^{-1}\tilde w(t) ) , \\
 & \ \, \vdots \\
\dot w_{1,r-2}(t) &= w_{1,r-1}(t) - \Gamma \tilde \Gamma^{-1} ( a_{r-2} + p_{r-2} h_1(t)) (w_{1,1}(t) - G \tilde w(t)) \\
& \qquad + R_{3} (w_{1,1}(t) + \Gamma  \tilde \Gamma^{-1}\tilde w(t) ) , \\
\dot w_{1,r-1}(t) &= w_{1,r}(t) - \Gamma \tilde \Gamma^{-1} ( a_{r-1} + p_{r-1} h_1(t)) (w_{1,1}(t) - G \tilde w(t)) \\
& \qquad + R_{2} (w_{1,1}(t) + \Gamma  \tilde \Gamma^{-1}\tilde w(t) ) - G z^{(r-1)}(t) , \\
\dot w_{1,r}(t) &=   -\Gamma \tilde \Gamma^{-1} ( a_{r} + p_{r} h_1(t)) (w_{1,1}(t) - G \tilde w(t)) + R_{1} (w_{1,1}(t) + \Gamma  \tilde \Gamma^{-1}\tilde w(t) ) \\
 & \qquad + \sum_{i=1}^{r}R_i z^{(i-1)}(t) + f\big(d(t),T(y,\dot y,\ldots,y^{(r-1)})(t) \big) ,
\ \\
h_1(t) &= \frac{1}{1-\vp_1(t)^2 \|w_{1,1}(t) - G \tilde w(t) \|^2} \\
\end{aligned}
%
%
%\begin{aligned}
%% w_{1,j}
%\dot w_{1,1}(t) &= w_{1,2}(t) - \Gamma \tilde \Gamma^{-1} ( a_1 + p_1 h_1(t)) (w_{1,1}(t) - G \tilde w(t)) + R_{r} (w_{1,1}(t) + \Gamma  \tilde \Gamma^{-1}\tilde w(t) ) , \\
%\dot w_{1,2}(t) &= w_{1,3}(t) - \Gamma \tilde \Gamma^{-1} ( a_2 + p_2 h_1(t)) (w_{1,1}(t) - G \tilde w(t)) + R_{r-1}(w_{1,1}(t) + \Gamma  \tilde \Gamma^{-1}\tilde w(t) ) , \\
% & \ \, \vdots \\
%\dot w_{1,r-2}(t) &= w_{1,r-1}(t) - \Gamma \tilde \Gamma^{-1} ( a_{r-2} + p_{r-2} h_1(t)) (w_{1,1}(t) - G \tilde w(t)) + R_{3} (w_{1,1}(t) + \Gamma  \tilde \Gamma^{-1}\tilde w(t) ) , \\
%\dot w_{1,r-1}(t) &= w_{1,r}(t) - \Gamma \tilde \Gamma^{-1} ( a_{r-1} + p_{r-1} h_1(t)) (w_{1,1}(t) - G \tilde w(t)) + R_{2} (w_{1,1}(t) + \Gamma  \tilde \Gamma^{-1}\tilde w(t) ) - G z^{(r-1)}(t) , \\
%\dot w_{1,r}(t) &= \qquad  -\Gamma \tilde \Gamma^{-1} ( a_{r} + p_{r} h_1(t)) (w_{1,1}(t) - G \tilde w(t)) + R_{1} (w_{1,1}(t) + \Gamma  \tilde \Gamma^{-1}\tilde w(t) ) \\
%& \qquad + \sum_{i=1}^{r}R_i z^{(i-1)}(t) + f\big(d(t),T(y,\dot y,\ldots,y^{(r-1)})(t) \big) ,
%\ \\
%h_1(t) &= \frac{1}{1-\vp_1(t)^2 \|w_{1,1}(t) - G \tilde w(t) \|^2} \\
%\end{aligned}
\end{equation}
for $i=2$
\begin{equation} \label{eq:ddt_w2}
\begin{aligned}
% w_{i,j}, i=2
\dot w_{2,1}(t) &= w_{2,2}(t) - (a_1 + p_1 h_2(t)) w_{2,1}(t) + (a_1 + p_1 h_{1}(t))(w_{1,1}(t)-G\tilde w(t)) , \\
 & \ \, \vdots \\
\dot w_{2,r-1}(t) &= w_{2,r}(t) - (a_{r-1} + p_{r-1} h_2(t)) w_{2,1}(t) \\
& \qquad  + (a_{r-1} + p_{r-1} h_{1}(t))(w_{1,1}(t)-G\tilde w(t)) , \\
\dot w_{2,r}(t) &= \qquad  - (a_{r} + p_{r} h_2(t)) w_{2,1}(t) + (a_{r} + p_{r} h_{1}(t))(w_{1,1}(t)-G\tilde w(t)) , \\
\ \\
h_2(t) &= \frac{1}{1-\vp(t)^2 \|w_{2,1}(t) \|^2} , \\
\end{aligned}
\end{equation}
and for $i=3,\ldots,r-1$ we find
\begin{equation} \label{eq:ddt_wi}
\begin{aligned}
% w_{i,j}, i>2
\dot w_{i,1}(t) &= w_{i,2}(t) - (a_1 + p_1 h_i(t)) w_{i,1}(t) + (a_1 + p_1 h_{i-1}(t)) w_{i-1,1}(t) , \\
 & \ \, \vdots \\
\dot w_{i,r-1}(t) &= w_{i,r}(t) - (a_{r-1} + p_{r-1} h_i(t)) w_{i,1}(t) + (a_{r-1} + p_{r-1} h_{i-1}(t)) w_{i-1,1}(t) , \\
\dot w_{i,r}(t) &= \qquad \quad \ - (a_{r \quad} + p_{r \quad} h_i(t)) w_{i,1}(t) + (a_{r \quad } + p_{r \quad } h_{i-1}(t))w_{i-1,1}(t) , \\
\ \\
h_i(t) &= \frac{1}{1-\vp(t)^2 \|w_{i,1}(t) \|^2} .
\end{aligned}
\end{equation}
\end{subequations}
\underline{Step 2}: 
For~$\bar q = rm(r-1) + r$ we define the operator $\tilde T : \cC([-\tau,\infty) \to \R^{rm}) \to \cL_{\rm loc}^\infty(\rp \to \R^{\bar q})$ as the solution operator of~\eqref{eq:ddt_w} in the following sense: 
for $\xi_1,\ldots,\xi_r \in \cC([0,\infty) \to \R^m)$ let $w_{i,j} : [0,\omega) \to \R^m$, $\omega \in (0,\infty]$ be the unique maximal solution of~\eqref{eq:ddt_w},
with $z= \xi_1, \dot z = \xi_2, \ldots, z^{(r-1)} = \xi_r $, and with suitable initial values~$w_{i,j}(0)$ according to the transformations. 
Then we define for $t \in [0,\omega)$
\begin{equation*}
\begin{small}
\tilde T(\xi_1,\ldots,\xi_r)(t) := \big( w_{1,1}(t),\ldots,w_{1,r}(t),w_{2,1}(t),\ldots,w_{r-1,r}(t),h_1(t),\ldots,h_{r-1}(t) \big)^\top. % \quad t \in [0,\omega).
\end{small}
\end{equation*}
Note, that in~\eqref{eq:ddt_w1} the terms $y,\dot y,\ldots, y^{(r-1)}$ can be expressed in terms of $w_{i,j}$ and $z,\dot z,\ldots, z^{(r-1)}$ 
using $y^{(i)} = z^{(i)} + w_{1,1}^{(i)} + \Gamma \tilde \Gamma^{-1} \tilde w^{(i)}$ and the equations~\eqref{eq:ddt_w}.
Now, for
\begin{equation*}
\begin{aligned}
\cD := 
\setdef{ 
%(t,\zeta_{1,1},\ldots,\zeta_{1,r},\zeta_{2,1},\ldots,\zeta_{r-1,r})  \in \rp \times \R^{rm(r-1)}   }
(t,\zeta_{1,1},\ldots,\zeta_{r-1,r})  \in \rp \times \R^{rm(r-1)}   }
{ \begin{array}{l}
 \vp_1(t) \| \zeta_{1,1}(t) - G \tilde  \zeta(t) \| < 1, \\
\vp(t) \| \zeta_{i,1}(t) \| < 1, \\ 
i=2,\ldots,r-1 
\end{array}
},
\end{aligned}
\end{equation*}
where $\tilde \zeta(\cdot) := \sum_{i=2}^{r-1}\zeta_{i,1}(\cdot)$, we have $(t,w_{1,1}(t),\ldots,w_{r-1,r}(t)) \in \cD$ for all $t \in [0,\omega)$. 
Furthermore, the closure of the graph of the solution $(w_{1,1},\ldots,w_{r-1,r})$ of~\eqref{eq:ddt_w} is not a compact subset of~$\cD$.

\ \\
Next, we show~$\tilde T \in \cT^{rm,\bar q}_{\tau}$; first we show that property~\ref{T:BIBO} from Definition~\ref{Def:OP-T} is satisfied.
To this end, we assume that $z,\dot z,\ldots, z^{(r-1)}$ are bounded on $[0,\omega)$. 
As the solution evolves in~$\cD$, $w_{1,1}- G \tilde w, w_{2,1},\ldots,w_{r-1,1}$ are bounded. Thus, $y = z + w_{1,1} + \Gamma \tilde \Gamma^{-1} \tilde w$ is bounded and hence $T\left(y,\dot y,\ldots, y^{(r-1)}\right)$ is bounded via~$T \in \cT^{rm,q}_{\tau,1}$, and therefore $f\left(d(\cdot),T\left(y,\dot y,\ldots, y^{(r-1)}\right)(\cdot)\right)$ is bounded on~$[0,\omega)$.

\ \\
\underline{Step 2a}: 
We show $w_{i,j} \in \cL^\infty([0,\omega) \to \R^m)$ for $i=1,\ldots,r-1$ and $j=1,\ldots,r$.
We set $w_i := (w_{i,1}^\top,\ldots,w_{i,r}^\top)^\top \in \R^{rm}$ for $i=1,\ldots,r-1$ and~$\bar w := w_{1,1} - G \tilde w$. 
For corresponding matrices~$A,Q,P$ from satisfying~\ref{Ass:A-Hurwitz}, using the Kronecker matrix product, we define
\begin{equation} \label{def:hatAPQ}
\hat A := A \otimes I_m \in \R^{rm \times rm}, \quad \hat P := P \otimes I_m \in \R^{rm \times rm}, \quad \hat Q := Q \otimes I_m \in \R^{rm \times rm}.
\end{equation}
Then, using~\cite[Fact 7.4.34]{Bern09}, we have $\sigma(\hat A) = \sigma(A)$, $\sigma(\hat P) = \sigma(P)$ and $\sigma(\hat Q) = \sigma(Q)$, and
\begin{equation} \label{eq:hatAhatP+hatPhatA}
\hat A^\top \hat P + \hat P \hat A + \hat Q = 0.
\end{equation}
Furthermore, for $p_1,\ldots,p_r$ from~\ref{Ass:A-Hurwitz}, setting $\bar P := (p_1,\ldots,p_{r})^\top \otimes I_m$ we have
\begin{equation}  \label{eq:hatPbarP}
\hat P \bar P = \big[ \tilde p I_m , 0 ,\ldots, 0 \big]^\top \in \R^{rm \times m},
%\hat P \bar P = \begin{pmatrix} \tilde p I_m \\ 0 \\ \vdots \\ 0 \end{pmatrix} \in \R^{rm \times m},
\end{equation}
where $\tilde p := P_1 - P_2 P_4^{-1} P_2^\top > 0$.
%Let $\bar P := (p_1,\ldots,p_r)^\top \otimes I_m \in \R^{rm \times m}$. 
Then we may rewrite~\eqref{eq:ddt_w} as
\begin{equation} \label{eq:ddt_w-overall}
\begin{aligned}
\dot w_1(t) &= \hat A w_1(t) - h_1(t) \bar P \Gamma \tilde \Gamma^{-1} \bar w(t) + B_1(t) , \\
\dot w_2(t) &= \hat A w_2(t) - h_2(t) \bar P w_{2,1}(t) + h_1(t) \bar P \bar w(t) + B_2(t)  , \\
\dot w_i(t) &= \hat A w_i(t) - h_i(t) \bar P w_{i,1}(t) + h_{i-1}(t) \bar P w_{i-1,1}(t) + B_i(t) , 
\end{aligned}
\end{equation}
for $i=3,\ldots,r-1$ and suitable bounded functions $B_1, B_2, B_i \in \cL^\infty([0,\omega) \to \R^{rm})$, respectively.

\ \\
Seeking a suitable Lyapunov function for the overall system~\eqref{eq:ddt_w-overall} we define
with~$ \hat P$ from~\eqref{def:hatAPQ} 
\begin{equation*}
\hat P_1 := \left(I_r \otimes (\Gamma \tilde \Gamma^{-1})^{-\frac{1}{2}}  \right) \hat P \left(I_r \otimes (\Gamma \tilde \Gamma^{-1})^{-\frac{1}{2}}  \right),
\end{equation*}
which is possible since~\ref{Ass:Gam-tildeGam} is satisfied by assumption. 
Observe $P_1 = P_1^\top > 0$ and, recalling $\hat P \bar P = [\tilde p I_m,0,\ldots,0]^\top \in \R^{rm \times m}$ via~\eqref{eq:hatPbarP}, 
we have 
\begin{equation} \label{eq:P1-barP}
\hat P_1  \bar P = [\tilde p (\Gamma \tilde \Gamma^{-1})^{-1},0,\ldots,0]^\top \in \R^{rm \times m}.
\end{equation}
Since for all $M \in \R^{m \times m}$ we have $\hat A (I_r \otimes M) = (I_r \otimes M) \hat A$,
$\hat A^\top (I_r \otimes M) = (I_r \otimes M) \hat A^\top $ 
we obtain
\[
\hat A^\top \hat P_1 = \left(I_r \otimes (\Gamma \tilde \Gamma^{-1})^{-\frac{1}{2}}  \right) \hat A^\top \hat P \left(I_r \otimes (\Gamma \tilde \Gamma^{-1})^{-\frac{1}{2}}\right)
\]
and respective for $\hat P_1 \hat A$. Therefore, 
\begin{equation} \label{eq:AP1-P1A-Q1}
\begin{aligned}
\hat A^\top \hat P_1 + \hat P_1 \hat A &= \left(I_r \otimes (\Gamma \tilde \Gamma^{-1})^{-\frac{1}{2}}  \right) \hat A^\top \hat P + \hat P \hat A \left(I_r \otimes (\Gamma \tilde \Gamma^{-1})^{-\frac{1}{2}}\right)  \\
&\overset{\eqref{eq:hatAhatP+hatPhatA}}{ =} - \left(I_r \otimes (\Gamma \tilde \Gamma^{-1})^{-\frac{1}{2}}  \right) \hat Q \left(I_r \otimes (\Gamma \tilde \Gamma^{-1})^{-\frac{1}{2}}\right) 
 =: - \hat Q_1,
\end{aligned}
\end{equation}
where $\hat Q_1 = \hat Q_1^\top$ by~\ref{Ass:Gam-tildeGam}, and $\hat Q_1 > 0$ via~\eqref{eq:hatAhatP+hatPhatA} 
and~\ref{Ass:Gam-tildeGam}.
We define
\begin{equation*}
0 < \cP := \begin{bmatrix} \hat P_1 & 0  &  \ldots & 0 \\
0 & \hat P &  &  \vdots \\
\vdots & & \ddots & 0 \\
0 & \ldots & 0 &\hat P \end{bmatrix} = \cP^\top \in \R^{rm(r-1) \times rm(r-1)},
\end{equation*}
and set $w := (w_1^\top,\ldots,w_{r-1}^\top)^\top \in \R^{m r(r-1)}$.

\ \\
Now, we consider the Lyapunov function candidate
\begin{equation*}
\begin{aligned}
V: \R^{rm} \times \cdots \times \R^{rm} &\to \R, \\
(w_1,\ldots,w_{r-1}) &\mapsto w^\top \cP w = w_1^\top \hat P_1 w_1 + \sum_{i=2}^{r-1} w_i^\top \hat P w_i , \\
\end{aligned}
\end{equation*}
and study its evolution along the solution trajectories of the respective differential equations~\eqref{eq:ddt_w-overall}. 
We fix~$\theta \in (0,\omega)$ and note that~$w_i \in \cL^\infty([0,\theta) \to \R^{rm})$ for all $i=1,\ldots,r-1$. 
Using~\eqref{eq:P1-barP} and~\eqref{eq:AP1-P1A-Q1} we obtain for~$t \in [\theta, \omega)$
\begin{equation} \label{eq:Lyapunov}
\begin{aligned}
\ddt w(t)^\top \cP w(t) =& \
w_1(t)^\top (\hat A^\top \hat P_1 + \hat P_1 \hat A)  w_1(t) \\
& - 2 h_1(t) w_{1}(t)^\top \hat P_1 \bar P \Gamma \tilde \Gamma^{-1} \bar w(t) + 2 w_1(t)^\top \hat P_1 B_1(t) \\
&+ w_2(t)^\top (\hat A^\top \hat P + \hat P \hat A) w_2(t) - 2 h_2(t) w_2(t)^\top \hat P \bar P w_{2,1}(t) \\
& + 2 h_1(t) w_{2}(t)^\top \hat P \bar P \bar w(t) + 2 w_2(t)^\top \hat P B_2(t) \\
& + \sum_{i=3}^{r-1} \Bigg[ w_i(t)^\top (\hat A^\top \hat P + \hat P \hat A) w_i(t) - 2 h_i(t) w_i(t)^\top \hat P \bar P w_{i,1}(t) \\
& \qquad + 2 h_{i-1}(t) w_i(t)^\top \hat P \bar P w_{i-1,1}(t) + 2 w_i(t)^\top \hat P B_i(t) \Bigg] \\
\le & - \lambda_{\rm min}(\hat Q_1) \|w_1(t)\|^2 - 2 \tilde p h_{1}(t) w_{1,1}(t)^\top \bar w(t)  + 2 \|P_1\| \|B_1\|_\infty \|w_1(t) \| \\
& - \lambda_{\rm min}(\hat Q) \|w_{2}(t)\|^2 - 2\tilde p h_2(t) \|w_{2,1}(t)\|^2 + 2 \tilde p h_1(t) w_{2,1}(t)^\top \bar w(t) \\
&+ 2 \|\hat P\| \|B_2\|_\infty \|w_2(t)\|  \\
& + \sum_{i=3}^{r-1} \Bigg[- \lambda_{\rm min}(\hat Q) \|w_i(t)\|^2  - 2\tilde p h_i(t) \|w_{i,1}(t)\|^2 \\
& \qquad + 2 \tilde p h_{i-1}(t) w_{i,1}(t)^\top w_{i-1,1}(t) + 2 \|\hat P\| \|B_i\|_\infty \|w_i(t)\| \Bigg] \\
\le &- \lambda_{\rm min}(\hat Q_1) \|w_1(t)\|^2 + 2 \|P_1\| \|B_1\|_\infty \|w_1(t) \| \\
& + \sum_{i=2}^{r-1} \Big[ - \lambda_{\rm min}(\hat Q)\|w_i(t)\|^2 + 2 \|\hat P\| \|B_i\|_\infty \|w_i(t)\| \Big] \\
& - 2 \tilde p h_{1}(t) w_{1,1}(t)^\top \bar w(t) \\
& \red{{- 2 \tilde p h_2(t) \|w_{2,1}(t)\|^2 }} + 2 \tilde p h_1(t) w_{2,1}(t)^\top \bar w(t) \\
& \blue{{- 2 \tilde p h_3(t) \|w_{3,1}(t)\|^2}} \red{{+ 2 \tilde p h_2(t) w_{3,1}(t)^\top w_{2,1}(t)}} \\
& - 2 \tilde p h_4(t) \|w_{4,1}(t)\|^2  \blue{{+ 2 \tilde p h_3(t) w_{4,1}(t)^\top w_{3,1}(t)}} \\
& \ \, \vdots \\
%& \green{{- 2 \tilde p h_{r-3}(t) \|w_{r-3,1}(t)\|^2}} + 2 \tilde p h_{r-4}(t) w_{r-3,1}(t)^\top w_{r-4,1}(t) \\
& \green{{- 2 \tilde p h_{r-2}(t) \|w_{r-2,1}(t)\|^2}} + 2 \tilde p h_{r-3}(t) w_{r-2,1}(t)^\top w_{r-3,1}(t)  \\
& - 2 \tilde p h_{r-1}(t) \|w_{r-1,1}(t)\|^2 \green{{+ 2 \tilde p h_{r-2}(t) w_{r-1,1}(t)^\top w_{r-2,1}(t) }} ,
%
%
%
%\ddt w(t)^\top \cP w(t) =& \
%% w_1(t)^\top \left(I_r \otimes (\Gamma \tilde  \Gamma^{-1})^{-\frac{1}{2}} \right) (\hat A^\top \hat P + \hat P \hat A) \left(I_r \otimes (\Gamma \tilde \Gamma^{-1})^{-\frac{1}{2}} \right) w_1(t) \\
% w_1(t)^\top (\hat A^\top  P_1 +  P_1 \hat A)  w_1(t) 
% - 2 h_1(t) w_{1}(t)^\top P_1 \bar P \Gamma \tilde \Gamma^{-1} \bar w(t) + 2 w_1(t)^\top P_1 B_1(t) \\
%&+ w_2(t)^\top (\hat A^\top \hat P + \hat P \hat A) w_2(t) - 2 h_2(t) w_2(t)^\top \hat P \bar P w_{2,1}(t) + 2 h_1(t) w_{2}(t)^\top \hat P \bar P \bar w(t) + 2 w_2(t)^\top \hat P B_2(t) \\
%&+ \sum_{i=3}^{r-1} \Big[ w_i(t)^\top (\hat A^\top \hat P + \hat P \hat A) w_i(t) - 2 h_i(t) w_i(t)^\top \hat P \bar P w_{i,1}(t) + 2 h_{i-1}(t) w_i(t)^\top \hat P \bar P w_{i-1,1}(t) + 2 w_i(t)^\top \hat P B_i(t) \Big] \\
%%
%\le & - \lambda_{\rm min}(\hat Q_1) \|w_1(t)\|^2 - 2 \tilde p h_{1}(t) w_{1,1}(t)^\top \bar w(t)  + 2 \|P_1\| \|B_1\|_\infty \|w_1(t) \| \\
%& - \lambda_{\rm min}(\hat Q) \|w_{2}(t)\|^2 - 2\tilde p h_2(t) \|w_{2,1}(t)\|^2 + 2 \tilde p h_1(t) w_{2,1}(t)^\top \bar w(t) + 2 \|\hat P\| \|B_2\|_\infty \|w_2(t)\| \\
%& + \sum_{i=3}^{r-1} \Big[- \lambda_{\rm min}(\hat Q) \|w_i(t)\|^2  - 2\tilde p h_i(t) \|w_{i,1}(t)\|^2 + 2 \tilde p h_{i-1}(t) w_{i,1}(t)^\top w_{i-1,1}(t) + 2 \|\hat P\| \|B_i\|_\infty \|w_i(t)\| \Big] \\
%%
%\le &- \lambda_{\rm min}(\hat Q_1) \|w_1(t)\|^2 + 2 \|P_1\| \|B_1\|_\infty \|w_1(t) \|  + \sum_{i=2}^{r-1} \Big[ - \lambda_{\rm min}(\hat Q)\|w_i(t)\|^2 + 2 \|\hat P\| \|B_i\|_\infty \|w_i(t)\| \Big] \\
\end{aligned}
\end{equation}
where $\lambda_{\rm min}(\hat Q), \lambda_{\rm min}(\hat Q_1)$ denotes the smallest eigenvalue of~$\hat Q$ and~$\hat Q_1$, respectively.

\ \\
In order to proceed with the estimation of each term in~\eqref{eq:Lyapunov}, we record the following observation. 
Due to~\ref{Ass:funnel-functions} we have for $i=2,\ldots,r-2$ and~$t \in [\theta,\omega)$
\begin{equation} \label{eq:estimate_hi-wi1}
\begin{aligned}
& - h_{i}(t) \|w_{i,1}(t)\|^2 + h_i(t)w_{i,1}(t)^\top w_{i+1,1}(t) \\
 \le & - h_{i}(t) \|w_{i,1}(t)\|^2 + h_i(t) \|w_{i,1}(t)\| \|w_{i+1,1}(t)\| \\
 < &- h_{i}(t) \|w_{i,1}(t)\|^2 + h_i(t) \|w_{i,1}(t)\| \frac{1}{\vp(t)} \\
 = &- h_{i}(t) \|w_{i,1}(t)\| \left( \|w_{i,1}(t)\| - \frac{1}{\vp(t)} \right) \\
= &- \| w_{i,1}(t)\| \frac{1}{(1+\vp(t)\| w_{i,1}(t)\| )(1-\vp(t) \| w_{i,1}(t)\|)} \left( \|w_{i,1}(t)\| - \frac{1}{\vp(t)} \right) \\
= &-\| w_{i,1}(t)\| \frac{1}{(1+\vp(t)\| w_{i,1}(t)\| )(1-\vp(t) \| w_{i,1}(t)\|)} \left( \vp(t) \|w_{i,1}(t)\| - 1 \right) \frac{1}{\vp(t)} \\
= & \ \| w_{i,1}(t)\| \frac{1}{1+\vp(t)\| w_{i,1}(t)\| } \frac{1}{\vp(t)}\\
 \le & \  \| w_{i,1}(t)\| \frac{1}{\vp(t)} \le \| w_{i,1}(t)\| \sup_{s \ge \theta} \frac{1}{\vp(s)} 
 \le \| w_{i}(t)\| \sup_{s \ge \theta} \frac{1}{\vp(s)}.
\end{aligned}
\end{equation}
We recall $\bar w(\cdot) = w_{1,1}(\cdot) - G\tilde w(\cdot)$ and observe $\|w_{2,1}(t)\| < \vp(t)^{-1}$ and $\| \tilde w(t) \| < \sum_{i=2}^{r-1}\vp(t)^{-1} = (r-2) \vp(t)^{-1}$ 
for~$t \in [\theta,\omega)$. 
Therefore, we obtain for~$t \in [\theta, \omega)$
\begin{equation} \label{eq:estimate_annoying-term}
\begin{aligned}
& -  h_1(t) w_{1,1}(t)^\top \bar w(t) +  h_1(t) w_{2,1}(t)^\top \bar w(t) \\
= & -h_1(t) \bar w(t)^\top \bar w(t) - h_1(t) (G \tilde w(t))^\top \bar w(t) +  h_1(t) w_{2,1}(t)^\top \bar w(t) \\
\le &-  h_1(t) \|\bar w(t)\|^2 +  h_1(t) \big( \|G\| \|\tilde w(t)\| + \|w_{2,1}(t)\| \big) \|\bar w(t)\| \\
\le & - h_1(t) \|\bar w(t)\| \left( \|\bar w(t)\| - \frac{\|G\|(r-2) + 1}{\vp(t)} \right) \\
\overset{\rm \ref{Ass:G}}{<} & - h_1(t) \|\bar w(t)\| \left( \|\bar w(t)\| - \frac{\frac{\rho - 1}{r-2} (r-2) + 1}{\vp(t)} \right) \\
%\overset{\rm \ref{Ass:G}}{<} & - h_1(t) \|\bar w(t)\| \left( \|\bar w(t)\| - \frac{\rho}{\vp(t)} \right) \\
\overset{\rm \ref{Ass:funnel-functions}}{=} & - h_1(t) \|\bar w(t)\| \left( \|\bar w(t)\| - \frac{1}{\vp_1(t)} \right) \\
\le & \  \|\bar w(t)\| \sup_{s \ge \theta} \frac{1}{\vp_1(s)} \\
\le &  \sup_{s \ge \theta} \frac{1}{\vp_1(s)}\big( \|w_{1,1}(t) - G \tilde w(t)\| \big) \\
\le &  \sup_{s \ge \theta} \frac{1}{\vp_1(s)}\big( \|w_{1,1}(t)\| + \|G\| \|\tilde w(t)\| \big) \\
\le &  \sup_{s \ge \theta} \frac{1}{\vp_1(s)}\left( \|w_{1}(t)\| + \|G\| \sum_{i=2}^{r-1}\|w_i(t)\| \right).
\end{aligned}
\end{equation}
We set
\begin{equation*}
\begin{aligned}
M_1 &:=  \| \hat P_1\|\|B_1\|_\infty + \tilde p \sup_{s \ge \theta} \vp_1(s)^{-1}, \\
M_i & := \|\hat P\|\|B_i\|_\infty +  \tilde p \sup_{s \ge \theta} \vp(s)^{-1} + \tilde p \|G\| \sup_{s \ge \theta} \vp_1(s)^{-1}, \ i=2,\ldots,r-1, \\
N & := \tfrac{2 M_1^2}{\lambda_{\rm min}(\hat Q_1)} + \sum_{i=2}^{r-1}\tfrac{2 M_i^2}{\lambda_{\rm min}(\hat Q)} .
\end{aligned}
\end{equation*}
Then, using $2ab \le 2a^2 + \tfrac{1}{2}b^2$ for $a,b \in \R$, 
we may estimate~\eqref{eq:Lyapunov} with the aid of~\eqref{eq:estimate_hi-wi1} and~\eqref{eq:estimate_annoying-term} 
for~$t \in [\theta,\omega)$ 
\begin{equation*}
\begin{aligned}
\ddt w(t)^\top \cP w(t) \le & 
- \lambda_{\rm min}(\hat Q_1) \|w_1(t)\|^2   
- \lambda_{\rm min}(\hat Q) \sum_{i=2}^{r-1} \|w_i(t)\|^2 \\
&  + 2 \left(\| \hat P_1\| \|B_1\|_\infty+\tilde p \sup_{s \ge \theta} \vp_1(s)^{-1} \right) \|w_1(t) \|  \\
&  + \sum_{i=2}^{r-1} 2 \left(\|\hat P\| \|B_i\|_\infty + \tilde p \sup_{s \ge \theta} \vp_i(s)^{-1} + \tilde p \|G\| \sup_{s \ge \theta} \vp_1(s)^{-1} \right) \|w_i(t)\| \\
\le & -  \lambda_{\rm min}(\hat Q_1) \|w_1(t)\|^2   - \lambda_{\rm min}(\hat Q) \sum_{i=2}^{r-1} \|w_i(t)\|^2 + \sum_{i=1}^{r-1} 2 M_i \|w_i(t)\|  \\
=& -  \lambda_{\rm min}(\hat Q_1) \|w_1(t)\|^2   - \lambda_{\rm min}(\hat Q) \sum_{i=2}^{r-1} \|w_i(t)\|^2 \\
& + \frac{ 2 M_1}{\sqrt{\lambda_{\rm min}(\hat Q_1)}} \|w_1(t)\| \sqrt{\lambda_{\rm min}(\hat Q_1)} \\
& + \sum_{i=2}^{r-1} \frac{2 M_i}{\sqrt{\lambda_{\rm min}(\hat Q)}} \|w_i(t)\| \sqrt{\lambda_{\rm min}(\hat Q)} \\
\le & -  \frac{\lambda_{\rm min}(\hat Q_1)}{2} \|w_1(t)\|^2   - \frac{\lambda_{\rm min}(\hat Q)}{2} \sum_{i=2}^{r-1} \|w_i(t)\|^2 + N \\
\le & - \frac{\mu}{2} w(t)^\top \cP w(t) + N ,
\end{aligned}
\end{equation*}
where $\mu := \tfrac{\min\{\lambda_{\rm min}(\hat Q), \lambda_{\rm min}(\hat Q_1)\}}{\lambda_{\rm max}(\cP)} =  \tfrac{\min\{\lambda_{\rm min}(\hat Q), \lambda_{\rm min}(\hat Q_1)\}}{\max\{\lambda_{\rm max}(\hat P),\lambda_{\rm max}(P_1)\}}> 0$. 
With the aid of Gr\"onwall's lemma we obtain %(cf.~\cite[Eq.(6)-(7)]{Gron19}) we obtain
\begin{equation*}
w(t)^\top \cP w(t) \le w(\theta)^\top \cP w(\theta) e^{-\frac{\mu}{2} (t-\theta)} + \frac{2N}{\mu} %+ w(\tau)^\top \cP w(\theta) e^{\frac{\mu}{2} \theta}
\end{equation*}
and therefore,
\begin{equation} \label{eq:est-w}
\|w(t)\|^2 \le \frac{\lambda_{\rm max}(\cP)}{\lambda_{\rm min}(\cP)} \|w(\theta)\|^2 e^{-\frac{\mu}{2} (t-\theta)} + \frac{2N}{\mu \lambda_{\rm min}(\cP)}.
\end{equation}
%}}%
Inequality~\eqref{eq:est-w} implies $w \in \cL^\infty\left([\theta,\omega) \to \R^{rm(r-1)}\right)$ and hence we have
$w_i \in \cL^\infty([0,\omega) \to \R^{rm})$ for all $i=1,\ldots,r-1$. 
In particular, we obtain $\bar w \in \cL^\infty([0,\omega) \to \R^m)$. 

\ \\
\underline{Step 2b}: 
We show $h_i \in \cL^\infty([0,\omega) \to \R)$, for all $i=1,\ldots,r-1$.
For the sake of better legibility we set $x_1(\cdot) := \bar w(\cdot)$ and $x_i(\cdot) := w_{i,1}(\cdot)$ for $i=2,\ldots,r-1$.
Observing $- \Gamma \tilde \Gamma^{-1} - G = -\Gamma \tilde \Gamma^{-1} - (I- \Gamma \tilde \Gamma^{-1}) = -I_m$, 
and setting $\tilde w_2(\cdot) := \sum_{i=2}^{r-1} w_{i,2}(\cdot)$ we obtain via~\eqref{eq:ddt_w} 
\begin{equation} \label{eq:ddt_x1}
\begin{aligned}
 & \dot x_1(t) = \dot w_{1,1}(t) - G \ddt \tilde w(t) \\
&=   w_{1,2}(t) - \Gamma \tilde \Gamma^{-1} ( a_1 + p_1 h_1(t)) (w_{1,1}(t) - G \tilde w(t)) + R_{r} (w_{1,1}(t) + \Gamma  \tilde \Gamma^{-1}\tilde w(t) ) \\
& \quad - G \Big[ \tilde w_{2}(t) - (a_1 + p_1 h_{r-1}(t))w_{r-1,1}(t) + (a_{1} + p_{1} h_{1}(t))(w_{1,1}(t)-G\tilde w(t))  \Big] \\
& =  w_{1,2}(t) - G \tilde w_2(t) + G(a_1 + p_1 h_{r-1}(t))w_{r-1,1}(t) + R_{r} (w_{1,1}(t) + \Gamma  \tilde \Gamma^{-1}\tilde w(t) ) \\
& \quad -  \Gamma \tilde \Gamma^{-1} ( a_1 + p_1 h_1(t)) (w_{1,1}(t) - G \tilde w(t)) - G ( a_1 + p_1 h_1(t)) (w_{1,1}(t) - G \tilde w(t)) \\
& =  w_{1,2}(t) - G \tilde w_2(t) + \underbrace{(-\Gamma \tilde \Gamma^{-1} - G)}_{= -I_m}( a_1 + p_1 h_1(t)) \underbrace{(w_{1,1}(t) - G \tilde w(t))}_{= x_1(t)}  \\
& \quad + G(a_1 + p_1 h_{r-1}(t))w_{r-1,1}(t) + R_{r} (w_{1,1}(t) + \Gamma  \tilde \Gamma^{-1}\tilde w(t) ) \\
& = - (a_1 + p_1 h_1(t)) x_1(t) + G(a_1 + p_1 h_{r-1}(t))w_{r-1,1}(t)\\
& \qquad + R_{r} (w_{1,1}(t) + \Gamma  \tilde \Gamma^{-1}\tilde w(t) ) +  w_{1,2}(t) - G \tilde w_2(t). 
\end{aligned}
\end{equation} 
Since~$p_1 =1$, using~\eqref{eq:ddt_w} and~\eqref{eq:ddt_x1} we have for $t \in [0,\omega)$ and $i=2,\ldots,r-1$
\begin{equation} \label{eq:ddt-w-bounded}
\begin{aligned}
\ddt \tfrac{1}{2} \| x_1(t)\|^2 &= -h_1(t) \| x_1(t)\|^2 + h_{r-1}(t) x_1(t)^\top G x_{r-1}(t) + x_1(t)^\top b_1(t), \\
%\ddt \tfrac{1}{2} \| x_{2}(t)\|^2 &= -h_2(t) \| x_{2}(t)\|^2 + h_1(t) x_{2}(t)^\top x_1(t) + x_{2}(t)^\top b_2(t) \\
\ddt \tfrac{1}{2} \| x_{i}(t)\|^2 &= -h_i(t) \| x_{i}(t)\|^2 + h_{i-1}(t) x_{i}(t)^\top x_{i-1}(t) + x_{i}(t)^\top b_i(t), %\ i=2,\ldots,r-1,
\end{aligned}
\end{equation}
for suitable functions~$b_i \in \cL^\infty([0,\omega) \to \R^m)$, $i=1,\ldots,r-1$, respectively.
We observe that in~\eqref{eq:ddt-w-bounded} for all $i=2,\ldots,r-1$ the~$i^{\rm th}$ differential equation depends on the preceding gain function~$h_{i-1}$, respectively, and the first differential equation depends on the last gain function~$h_{r-1}$.
Therefore, we cannot apply standard funnel argumentation to show boundedness of the gain functions, cf.\cite[p. 484-485]{IlchRyan02b}, 
\cite[p. 241-242]{Ilch13} or~\cite[p. 350-351]{BergLe18a}.
However, on closer examination of the respective proofs in the aforementioned references we may retain the following. 
Due to the shape of the gain function~$h_i$, boundedness of~$h_i$ on~$[0,\omega)$ is equivalent to the existence of ${\nu_i > 0}$ such that for all~$t \in [0,\omega)$ 
we have~${\vp(t)^{-1} - \|x_i(t)\| \ge \nu_i}$, $i=2,\ldots,r-1$ (${\vp_1(t)^{-1} - \|x_1(t)\| \ge \nu_1}$), respectively.
Moreover, via the loop structure in~\eqref{eq:ddt-w-bounded} it suffices to show boundedness of one gain function, which implies boundedness of all remaining.

\ \\
We define $\psi_1(\cdot) := \vp_1(\cdot)^{-1}$, $\psi(\cdot) := \vp(\cdot)^{-1}$,
set $\lambda_1 := \inf_{s \in (0,\omega)} \psi_1(s) > 0$ and $\lambda := \inf_{s \in (0,\omega)} \psi(s) > 0$, and
fix $\beta \in (0,\omega)$.
Since $\liminf_{t \to \infty} \vp(t) > 0$ and $\dot \vp(\cdot)$ is bounded, we have that $\ddt \psi|_{[\beta, \infty)}(\cdot)$ is bounded, and respective for $\ddt \psi_1|_{[\beta, \infty)}(\cdot)$.
Thus, there exists a Lipschitz bound $L > 0$ of $\psi|_{[\beta,\infty)}(\cdot)$, and $L_1$ of $\psi_{1}|_{[\beta,\infty)}(\cdot)$.
For~$\rho>1$ as in~\ref{Ass:funnel-functions} we fix~$\delta > 0$ as
\begin{equation} \label{def:delta}
\frac{1}{\rho + 1} < \delta < \frac{1}{2},
\end{equation}
and define
\begin{equation} \label{def:Delta}
\Delta_1 := \rho - \|G\|\left( 4 \rho^2 (\rho+1)^{r-2} - 1 \right) \overset{\rm \ref{Ass:G}}{>} 0, 
\quad \Delta := 1-2 \delta \overset{\eqref{def:delta}}{>} 0.
\end{equation}
With this we choose
\begin{equation*} %\label{def:kappa}
\begin{aligned}
0 < \kappa < \min &\Bigg\{ \frac{\lambda_1}{1+\rho}, 
\frac{\lambda}{2}, 
\inf_{s \in (0,\beta]} \left(\psi_1(s) - \| x_{1}(s)\| \right), \\
& \qquad \min_{i \in \{2,\ldots,r-1\}} \left\{\inf_{s \in (0,\beta]} \left(\psi(s) - \| x_{i}(s)\| \right)\right\} \Bigg\} 
\end{aligned}
\end{equation*}
small enough such that for~$\Delta_1, \, \Delta > 0$ from~\eqref{def:Delta}
\begin{subequations}
\begin{align}
0 < L_1  \le  \min &\Bigg\{ 
\frac{ \lambda_1^2}{4 \kappa} \frac{\rho - \|G\|}{\rho} - \sup_{s \in [0,\omega)}\|b_1(s)\|,  \label{eq:L1} \\
& \qquad
 \frac{\Delta_1 \lambda_1^2}{4 \rho \kappa} - 2 \|G\| \sup_{s \in [\beta,\omega)} \psi_1(s) \rho (\rho+1)^{r-2} - \sup_{s \in [0,\omega)} \|b_1(s)\|
\Bigg\} \nonumber, \\
0 < L  \le \min &\Bigg\{ \frac{\rho^2 \lambda^2}{2 \kappa}  - \sup_{s \in [0,\omega)}  \|b_2(s)\|, \label{eq:L}  \\
& \qquad 
\min_{i \in \{3,\ldots,r-1\}} \left\{ 2^{i-1} \Delta \, \frac{\rho^2 \lambda^2}{ \kappa} - \sup_{s \in [0,\omega) } \| b_i(s)\| \right\} \Bigg\}.
 \nonumber
\end{align}
\end{subequations}
%
%Note that~$L$ is well defined since~$r \ge 3$ and~$i\le r-1$.
Note that~$L_1, L$ are well defined since~$\Delta_1, \, \Delta > 0$ and~$\rho > \|G\|$.
%
%\red{{
Further, since~$\rho > 1$ we choose~$\hat \delta $ such that
\begin{equation} \label{def:hatdelta}
\frac{1}{\rho} < \hat \delta \le 1 .
\end{equation}
With this we define
\begin{equation} \label{eq:kappas}
\begin{aligned}
\kappa_1 & := \kappa, \\
\kappa_i & :=  \frac{\delta^{i-1}}{2\rho^2} \, \kappa, \quad i=2,\ldots,r-2, \\
\kappa_{r-1} & := \frac{ \delta^{r-2} \hat \delta}{2\rho^2} \, \kappa.
\end{aligned}
\end{equation}
Note that these definitions imply $\kappa_{r-1} < \kappa_{r-2} < \cdots < \kappa_2 < \kappa_1$.
We show that for all $t \in [0,\omega)$ we have $\psi_1(t) - \|x_1(t)\| \ge \kappa_1$ 
and $\psi(t) - \|x_i(t)\| \ge \kappa_i$ for all $i=2,\ldots,r-1$;
which is true on~$(0,\beta]$ by definition of~$\kappa$. 
We set 
\begin{equation*}
\begin{aligned}
t_0^1 &:= \inf \setdef{t \in (\beta, \omega)}{ \psi_1(t) - \|x_1(t)\| < \kappa_1}, \\
t_0^i &:= \inf \setdef{t \in (\beta, \omega)}{ \psi(t) - \|x_i(t)\| < \kappa_i}, \  i = 2,\ldots,r-1,
\end{aligned}
\end{equation*}
where the infimum of the empty set is infinity as usual.
Seeking a contradiction we suppose that~$t_0^\ell < \infty$ for some~$\ell \in \{1,\ldots,r-1\}$.
%
%t_0^1 := \inf \setdef{t \in (\beta, T]}{ \psi_1(t) - \|x_1(t)\| < \kappa_1}$ and
%$t_0^i := \inf \setdef{t \in (\beta, T]}{ \psi(t) - \|x_i(t)\| < \kappa_i}$, $i = 2,\ldots,r-1$. 
%
Then either
\begin{subequations}
\begin{equation} \label{eq:t0:at-least-one-smaller}
\left( \exists\, \ell \in \{2,\ldots,r-1\} :  \ t_0^\ell < t_0^{\ell-1} \right) \ \lor \ t_0^1 < t_0^{r-1},
\end{equation}
or
\begin{equation} \label{eq:t0:all-equal}
t_0^1 = \cdots = t_0^{r-1}.
\end{equation}
\end{subequations}
If~\eqref{eq:t0:all-equal} is true, we decrease~$\hat \delta$ and obtain~$t_0^1 < t_0^{r-1}$; 
therefore without loss of generality we may assume that~\eqref{eq:t0:at-least-one-smaller} is true.
We set~$t_0 := t_0^\ell$, i.e., $t_0$ denotes the moment of the very first complete excess of~$\kappa_\ell$, and distinguish three cases, namely either $\ell = 1$, or $\ell=2$, or $3 \le \ell \le r-1$.

\ \\
If 
%\red{{
$\ell = 1$
%}}
there may occur two possible cases, namely either
\begin{subequations}
\begin{equation} \label{eq:t0:psi1-x1-le-psi_r-1-x_r-1}
\psi_1(t_0) - \|x_1(t_0)\| \le \psi(t_0) - \|x_{r-1}(t_0)\|, 
\end{equation}
or
\begin{equation} \label{eq:t0:psi1-x1-g-psi_r-1-x_r-1}
\psi_1(t_0) - \|x_1(t_0)\|  > \psi(t_0) - \|x_{r-1}(t_0)\| .
\end{equation}
\end{subequations}
First, we draw our attention to case~\eqref{eq:t0:psi1-x1-le-psi_r-1-x_r-1} and observe
\begin{equation*}
\begin{aligned}
 \psi_1(t_0) - \|x_1(t_0)\| &\le \psi(t_0) - \|x_{r-1}(t_0)\| \\
\iff  \|x_1(t_0) \| &\ge \psi_1(t_0) -  \psi(t_0) + \|x_{r-1}(t_0)\| \\
& = \frac{\vp(t_0) - \vp_1(t_0)}{\vp_1(t_0) \vp(t_0)} + \|x_{r-1}(t_0)\|,
\end{aligned}
\end{equation*}
and thus, using~\ref{Ass:funnel-functions},
\begin{equation*} 
\begin{aligned}
\|x_1(t_0)\| & \ge \frac{\vp(t_0) - \vp_1(t_0)}{\vp_1(t_0) \vp(t_0)} + \|x_{r-1}(t_0)\|  = \frac{\rho - 1}{\vp(t_0)} + \|x_{r-1}(t_0)\| \\
&> (\rho -1) \| x_{r-1}(t_0)\| + \|x_{r-1}(t_0)\| = \rho \|x_{r-1}(t_0)\|.
\end{aligned}
\end{equation*}
Then, due to the definition of~$t_0$, there exists~$ t_1 \in (t_0, \omega)$ such that $\psi_1( t_1) - \|x_1 ( t_1) \| < \kappa_1$, and 
\begin{equation} \label{eq:x1-bigger-rho-xr-1}
\forall\, t\in [t_0,t_1]: \ \|x_1(t)\| > \rho \|x_{r-1}(t)\|  .
\end{equation}
Thus, we readily deduce the following relations for~$t \in [t_0,  t_1]$
\begin{equation} \label{eq:relations-1-h-x}
\begin{aligned}
\psi_1(t) - \| x_1(t)\| &\le \kappa_1, \\
\| x_1 (t) \| &\ge \psi_1(t) - \kappa_1 \ge \frac{\lambda_1}{2}, \\
h_1(t) = \frac{1}{1-\vp_1(t)^2 \| x_1(t)\|^2} &\ge \frac{\lambda_1}{2 \kappa_1}.
\end{aligned}
\end{equation}
We consider the first equation in~\eqref{eq:ddt-w-bounded} for~$t \in [t_0,  t_1]$
\begin{equation*}
\begin{aligned}
\ddt \tfrac{1}{2} \|x_1(t)\|^2 &= -h_1(t) \|x_1(t)\|^2 + h_{r-1}(t) x_1(t)^\top Gx_{r-1}(t) + x_1(t)^\top b_1(t) \\
&\le -h_1(t) \| x_1(t)\|^2 + h_{r-1}(t) \|x_1(t)\| \|G\| \|x_{r-1}(t)\| + \|x_1(t)\| \|b_1(t)\| \\
& \overset{\eqref{eq:x1-bigger-rho-xr-1}}{<} \big( -h_1(t) \rho +  h_{r-1}(t) \|G\| \big) \frac{\|x_1(t)\|^2}{\rho} + \|x_1(t)\| \|b_1(t)\| ,
\end{aligned}
\end{equation*}
%where we used~\eqref{eq:x1-bigger-rho-xr-1} in the last line.
Via condition~\ref{Ass:funnel-functions} and relation~\eqref{eq:x1-bigger-rho-xr-1} we obtain for~$t \in [t_0, t_1]$
\begin{equation*} %\label{eq:h1-rho-hr-1-G}
\begin{aligned}
h_1(t) \rho - h_{r-1}(t) \|G\| &= h_1(t) h_{r-1}(t) \Big(\rho - \rho \vp(t)^2 \|x_{r-1}(t)\|^2 \\
& \qquad - \|G\| + \|G\| \vp_1(t)^2 \|x_1(t)\|^2 \Big) \\
%&= h_1(t) h_{r-1}(t) \big( \rho - \rho^3 \vp_1(t)^2 x_{r-1}(t)^2 - \|G\| + \|G\| \vp_1(t)^2 x_1(t)^2 \big) \\
& \overset{\eqref{eq:x1-bigger-rho-xr-1}}{>} h_1(t) h_{r-1}(t) \Big( \rho - \rho \vp(t)^2 \|x_{r-1}(t)\|^2 \\
& \qquad - \|G\| + \|G\| \vp_1(t)^2 \rho^2 \|x_{r-1}(t)\|^2 \Big) \\
& \overset{\rm \ref{Ass:funnel-functions}}{ =} h_1(t) h_{r-1}(t)( \rho - \|G\| ) \big( 1- \vp(t)^2 \|x_{r-1}(t)\|^2 \big) \\
& = h_1(t) (\rho - \|G\| ) 
\overset{\eqref{eq:relations-1-h-x}}{\ge} \frac{\lambda_1}{2 \kappa_1} (\rho - \|G\| ) .
\end{aligned}
\end{equation*}
Hence, using the estimation above and the relations from~\eqref{eq:relations-1-h-x} we estimate the first equation in~\eqref{eq:ddt-w-bounded} for~$t  \in [t_0,  t_1]$
\begin{equation*}
\begin{aligned}
\ddt \tfrac{1}{2} \|x_1(t)\|^2 &< - \frac{\lambda_1}{2 \kappa_1} (\rho - \|G\|) \frac{\|x_1(t)\|^2}{\rho} + \|x_1(t)\| \|b_1(t)\| \\
 & \le \left( - \frac{ \lambda_1^2}{4 \kappa_1} \frac{\rho - \|G\|}{\rho} + \sup_{s \in [0,\omega)}\|b_1(s)\| \right) \|x_1(t)\| \\
  & \overset{\eqref{eq:kappas}}{=} \left( - \frac{ \lambda_1^2}{4 \kappa} \frac{\rho - \|G\|}{\rho} + \sup_{s \in [0,\omega)}\|b_1(s)\| \right) \|x_1(t)\| 
\overset{\eqref{eq:L1}}{\le} - L_1 \|x_1(t)\|.
\end{aligned}
\end{equation*}
From this we calculate
\begin{subequations}  \label{eq:contradiction}
\begin{equation}
\begin{aligned}
\|x_1(t_1)\| - \|x_1(t_0)\| &= 
\int_{t_0}^{t_1} \frac{\big( \ddt \tfrac{1}{2}\|x_1(t)\|^2 \big)}{\|x_1(t)\|} \, \dt \\
& \le \int_{t_0}^{t_1} - L_1 \, \dt 
 = -L_1 (t_1 - t_0)
\le \psi_1(t_1) - \psi_1(t_0)
\end{aligned}
\end{equation}
and therefore,
\begin{equation}
\kappa_1 = \psi_1(t_0) - \| x_1(t_0) \| \le \psi_1(t_1) - \|x_1(t_1)\| < \kappa_1,
\end{equation}
\end{subequations}
a contradiction. %Thence, we have $h_1 \in \cL^{\infty}([\beta,\omega) \to \R)$. 
%
%\ \\
%\subsubsection{Case \eqref{eq:t0:psi1-x1-g-psi_r-1-x_r-1} }
Now, we consider the case~\eqref{eq:t0:psi1-x1-g-psi_r-1-x_r-1} where by~$t_0 = t_0^1 < t_0^{r-1}$
\begin{equation*}
 \kappa_{r-1} \le  \psi(t_0) - \|x_{r-1}(t_0)\| < \psi_1(t_0) - \| x_1(t_0) \| = \kappa_1.
\end{equation*}
This, together with the definition of~$t_0$ implies the existence of $t_1 \in (t_0,t_0^{r-1})$ such that
\begin{align} \label{eq:t0:relations_kap_r-1-le-psi-x_r-1-l-psi1-x1-le-kap1}
 & \forall\, t\in [t_0,t_1]\,: \  \kappa_{r-1} \le \psi(t) - \|x_{r-1}(t)\| < \psi_1(t) - \|x_1(t)\| \le \kappa_1
%\iff & \forall\, t\in [t_0,t_1]\,: \ \|x_{r-1}(t)\| \le \psi(t) - \kappa_{r-1} 
\end{align}
from which we obtain for $t \in [t_0,t_1]$
\begin{subequations} 
\begin{equation} \label{eq:x_r-1-le-x1+kappas}
\begin{aligned}
\|x_{r-1}(t)\| \le \psi(t) -  \kappa_{r-1} & \overset{\rm \ref{Ass:funnel-functions}}{=} \frac{1}{\rho} \left( \frac{1}{\vp_1(t)} - \rho  \kappa_{r-1} \right) \\
% \frac{1}{\rho} \left( \frac{1}{\vp_1(t)} - \kappa_1 + \kappa_1 - \rho  \kappa_{r-1} \right) \\
&= \frac{1}{\rho} \left( \psi_1(t) - \kappa_1 + \kappa_1 - \rho  \kappa_{r-1} \right) \\
&\le \frac{ \|x_1(t)\| + \kappa_1 - \rho  \kappa_{r-1}}{\rho}
\end{aligned}
\end{equation}
and therefore,
\begin{equation} \label{eq:x1-ge-x_r-1-kappas}
\forall\, t \in [t_0,t_1]\,: \  \|x_1(t)\|  \ge \rho \|x_{r-1}(t)\| - (\kappa_1 - \rho  \kappa_{r-1}) > 0,
\end{equation}
\end{subequations}
the last inequality via
\begin{equation*}
\begin{aligned}
\rho \|x_{r-1}(t)\| - (\kappa_1 - \rho  \kappa_{r-1}) & \overset{\rm \ref{Ass:funnel-functions},~\eqref{eq:t0:relations_kap_r-1-le-psi-x_r-1-l-psi1-x1-le-kap1}}{\ge} 
 \frac{\rho}{\rho \vp_1(t)} - \rho \kappa_1 - (\kappa_1 - \rho  \kappa_{r-1}) \\
&\ge  \lambda_1 - (\rho + 1)\kappa_1 + \rho  \kappa_{r-1} > \rho  \kappa_{r-1}, 
\end{aligned}
\end{equation*}
where we used~$\kappa_1 < \tfrac{\lambda_1}{\rho+1}$ in the last step.
Furthermore, recalling~$h_{r-1}$ and using the definition of~$\kappa_1$ and~$ \kappa_{r-1}$ we deduce for $t \in [t_0,t_1]$
\begin{equation*} 
\begin{aligned}
\frac{1}{h_{r-1}(t)} &= 1- \vp(t)^2 \|x_{r-1}(t)\|^2 = (1+\vp(t)\|x_{r-1}(t)\|)(1-\vp(t)\|x_{r-1}(t)\|) \\
 & = \vp(t) (1+\vp(t) \|x_{r-1}(t)\|) (\psi(t) - \|x_{r-1}(t)\|) \\
 &\ge \vp(t) (1+\vp(t) \|x_{r-1}(t)\|)  \kappa_{r-1} \ge \vp(t)  \kappa_{r-1} = \rho \vp_1(t)  \kappa_{r-1} ,
\end{aligned}
\end{equation*}
and hence for~$t \in [t_0,t_1]$ we have
\begin{equation} \label{eq:h_r-1-le}
h_{r-1}(t) = \frac{1}{1 - \vp(t)^2 \|x_{r-1}(t)\|^2} \le \frac{1}{\vp(t)  \kappa_{r-1}} \overset{\rm \ref{Ass:funnel-functions}}{=} \frac{1}{\rho \vp_1(t)  \kappa_{r-1}}.
\end{equation}
We consider the first equation in~\eqref{eq:ddt-w-bounded} for~$t \in [t_0,  t_1]$ and obtain
\begin{equation*}
\begin{aligned}
\ddt \tfrac{1}{2} \|x_1(t)\|^2 &= -h_1(t) \|x_1(t)\|^2 + h_{r-1}(t) x_1(t)^\top Gx_{r-1}(t) + x_1(t)^\top b_1(t) \\
&\le -h_1(t) \| x_1(t)\|^2 + h_{r-1}(t) \|x_1(t)\| \|G\| \|x_{r-1}(t)\| \\
& \qquad + \|x_1(t)\| \|b_1(t)\| \\
& \overset{\eqref{eq:x_r-1-le-x1+kappas}}{\le} -h_1(t) \| x_1(t)\|^2 + \|G\| h_{r-1}(t) \| x_1(t)\| \frac{\|x_1(t)\| + (\kappa_1 - \rho  \kappa_{r-1})}{\rho}  \\
& \qquad + \|x_1(t)\| \|b_1(t)\| \\
& = \bigg( - \rho h_1(t) + \|G\| h_{r-1}(t) \bigg) \frac{\| x_1(t)\|^2}{\rho} \\
& \qquad + \|G\| \frac{\kappa_1 - \rho  \kappa_{r-1}}{\rho} h_{r-1}(t) \|x_1(t)\| + \|x_1(t)\| \|b_1(t)\| .
\end{aligned}
\end{equation*}
Noting that by definition of~$\kappa_1,\kappa_{r-1}$ we have $\kappa_1 - \rho \kappa_{r-1} = \kappa \, \left(1 - \tfrac{\delta^{r-2} \hat \delta}{2 \rho} \right) > 0$, we estimate for $t \in [t_0,t_1]$ 
%\begin{equation*} % \label{eq:h1-rho-hr-1-G}
\begin{small}
\begin{align*}
\rho  h_1(t)  -  & \|G\| h_{r-1}(t)  \\
&= h_1(t) h_{r-1}(t) \big(\rho - \rho \vp(t)^2 \|x_{r-1}(t)\|^2 - \|G\| + \|G\| \vp_1(t)^2 \|x_1(t)\|^2 \big) \\
& \overset{\eqref{eq:x1-ge-x_r-1-kappas}}{\ge}
h_1(t) h_{r-1}(t) \bigg( \rho - \rho \vp(t)^2 \|x_{r-1}(t)\|^2 \\
&\quad - \|G\| + \|G\| \vp_1(t)^2 \big(\rho \|x_{r-1}(t) - (\kappa_1 - \rho  \kappa_{r-1}) \big)^2 \bigg) \\
&= h_1(t) h_{r-1}(t) \bigg( \rho - \rho \vp(t)^2 \|x_{r-1}(t)\|^2 \\
& \quad - \|G\| + 
\|G\| \vp_1(t)^2 \Big(\rho^2 \|x_{r-1}(t)\|^2 - 2\rho\|x_{r-1}(t)\|(\kappa_1 - \rho  \kappa_{r-1}) + (\kappa_1 - \rho  \kappa_{r-1})^2 \Big) \bigg) \\
& \overset{\rm \ref{Ass:funnel-functions}}{ =} h_1(t) h_{r-1}(t)( \rho - \|G\| ) \big( 1- \vp(t)^2 \|x_{r-1}(t)\|^2 \big) \\
& \quad + h_1(t) h_{r-1}(t) \|G\| \vp_1(t)^2 \big( - 2\rho\|x_{r-1}(t)\|(\kappa_1 - \rho  \kappa_{r-1}) + (\kappa_1 - \rho  \kappa_{r-1})^2 \big) \\
& > h_1(t) h_{r-1}(t)( \rho - \|G\| ) \big( 1- \vp(t)^2 \|x_{r-1}(t)\|^2 \big) \\
& \quad + h_1(t) h_{r-1}(t) \|G\| \vp_1(t)^2 \left( - 2 \rho \frac{1}{ \vp(t)} (\kappa_1 - \rho  \kappa_{r-1}) + (\kappa_1 - \rho  \kappa_{r-1})^2 \right) \\
& \overset{\rm \ref{Ass:funnel-functions}}{=} h_1(t) h_{r-1}(t)( \rho - \|G\| ) \big( 1- \vp(t)^2 \|x_{r-1}(t)\|^2 \big) \\
& \qquad + h_1(t) h_{r-1}(t) \|G\| \vp_1(t)^2 \left( -  \frac{2}{\vp_1(t)} (\kappa_1 - \rho  \kappa_{r-1}) + (\kappa_1 - \rho  \kappa_{r-1})^2 \right) \\
& = h_1(t) (\rho - \|G\| ) 
+  h_1(t) h_{r-1}(t) \|G\| \vp_1(t) \big(-2 (\kappa_1 - \rho  \kappa_{r-1}) + \vp_1(t) (\kappa_1 - \rho  \kappa_{r-1})^2 \big) \\
& > h_1(t) (\rho - \|G\| ) 
-2  h_1(t) h_{r-1}(t) \|G\| \vp_1(t) (\kappa_1 - \rho  \kappa_{r-1}) \big) \\
& = h_1(t) \Big( \rho - \|G\| - 2\vp_1(t) \|G\|h_{r-1}(t) ( \kappa_1 - \rho  \kappa_{r-1}) \Big) \\
& \overset{\eqref{eq:h_r-1-le}}{\ge}  h_1(t) \left( \rho - \|G\| - 2\vp_1(t) \|G\|\frac{1}{\rho \vp_1(t)  \kappa_{r-1}}( \kappa_1 - \rho  \kappa_{r-1}) \right) \\
%& = h_1(t) \Big( \rho - \|G\| - \frac{2 \|G\|}{\rho  \kappa_{r-1}} \kappa_1 + \frac{2 \|G\|}{\rho  \kappa_{r-1}}\rho  \kappa_{r-1}) \Big) \\
%& = h_1(t) \Big( \rho + \|G\| - \frac{2 \|G\|}{\rho } \frac{\kappa_1}{ \kappa_{r-1}} \Big) \\
%& \overset{\eqref{eq:kappa1-rho_kappa_r-1}}{ =} h_1(t) \left( \rho - \|G\| - \frac{2 \|G\|}{\rho} \frac{4 \rho^2}{\kappa} \frac{4 \rho -1}{4 \rho} \kappa \right) \\
& = h_1(t) \left( \rho + \|G\| - \frac{2 \|G\|}{\rho} \frac{\kappa_1}{\kappa_{r-1}} \right) \\
& \overset{\eqref{eq:kappas}}{=} h_1(t) \left( \rho + \|G\| - \frac{4 \rho^2 \|G\|}{\rho} \frac{ \kappa}{  \delta^{r-2} \hat \delta \kappa} \right) \\
& \overset{\eqref{def:delta},\eqref{def:hatdelta}}{>} h_1(t) \Big( \rho + \|G\| - 4 \|G\| \rho^2 (\rho+1)^{r-2} \Big) \\
& \overset{\eqref{def:Delta}}{=} h_1(t) \Delta_1 .
\end{align*}
\end{small}
%\end{equation*}
%
Therefore, we find for~$t \in [t_0,t_1]$
\begin{equation} \label{eq:h1-h_r-1}
\rho h_1(t) - \|G\| h_{r-1}(t) > h_1(t) \Delta_1 \overset{\eqref{eq:relations-1-h-x}}{\ge} \frac{\lambda_1 \Delta_1}{2 \kappa} > 0 .
%
%\rho h_1(t) - \|G\| h_{r-1}(t) > h_1(t) \delta \rho  \overset{\eqref{eq:relations-1-h-x}}{\ge} \frac{\delta \rho \lambda_1}{2 \kappa_1} 
%\overset{\eqref{eq:kappas}}{ =} \frac{\delta \rho \lambda_1}{2 \kappa} > 0.
\end{equation}
To sum up, with~\eqref{eq:relations-1-h-x},~\eqref{eq:h_r-1-le} and~\eqref{eq:h1-h_r-1}, and the calculations above we obtain for~${t\in[t_0,t_1]}$
\begin{equation*}
\begin{aligned}
\ddt \tfrac{1}{2} \|x_1(t)\|^2 
& \overset{\eqref{eq:h1-h_r-1}}{<}  -\frac{\Delta_1 \lambda_1}{2 \kappa} \frac{\|x_1(t)\|^2}{\rho} + \left(\frac{\|G\| (\kappa_1 - \rho  \kappa_{r-1})}{\rho} h_{r-1}(t)  
+  \|b_1(t)\| \right) \|x_1(t)\| \\
& \overset{\eqref{eq:h_r-1-le}}{\le}  -\frac{\Delta_1  \lambda_1}{2 \rho \kappa}\|x_1(t)\|^2
+ \frac{ \|G\| }{\rho^2 \vp_1(t)  } \frac{\kappa_1}{\kappa_{r-1}} \|x_1(t)\| + \|x_1(t)\| \|b_1(t)\| \\
& \overset{\eqref{eq:kappas}}{=} -\frac{\Delta_1  \lambda_1}{2 \rho \kappa} \|x_1(t)\|^2
+ \frac{2 \|G\| }{ \vp_1(t)} \frac{1}{ \delta^{r-2} \hat \delta} \|x_1(t)\| + \|x_1(t)\| \|b_1(t)\| \\
& \overset{\eqref{def:delta}}{\le} -\frac{\Delta_1  \lambda_1}{2 \rho \kappa} \|x_1(t)\|^2 
+ \left( 2 \|G\| \sup_{s \in [\beta,\omega)} \psi_1(s) \rho (\rho+1)^{r-2} +  \|b_1(t)\| \right)\|x_1(t)\| \\
& \overset{\eqref{eq:relations-1-h-x}}{\le} 
\left( - \frac{\Delta_1 \lambda_1^2}{4 \rho \kappa} + 2 \|G\| \sup_{s \in [\beta,\omega)} \psi_1(s) \rho (\rho+1)^{r-2} + \sup_{s \in [0,\omega)} \|b_1(s)\| \right) \|x_1(t)\| .
\end{aligned}
\end{equation*}
Then, similar to~\eqref{eq:contradiction} we deduce a contradiction.
Therefore, in both cases~\eqref{eq:t0:psi1-x1-le-psi_r-1-x_r-1} and~\eqref{eq:t0:psi1-x1-g-psi_r-1-x_r-1}, and for $t \in [\beta, \omega)$ we have $\psi_1(t) - x_1(t) \ge \kappa_1$. 
Moreover, for~$t \in [0,\beta)$ we have $\psi_1(t) - \|x_1(t)\| \ge \kappa_1$ by definition of~$ \kappa$, and hence $h_1 \in \cL^{\infty}([0,\omega) \to \R)$.
Then successively we obtain $h_i \in \cL^\infty([0,\omega) \to \R)$ for all remaining $i \in \{2,\ldots,r-1\}$.

\ \\
If $\ell = 2$ we have, because~$t_0 = t_0^2 < t_0^1$ by~\eqref{eq:t0:at-least-one-smaller}
\begin{equation*}
\psi(t_0) - \|x_2(t_0)\| = \kappa_2  \overset{\eqref{eq:kappas}}{=}  \frac{\delta \kappa}{2 \rho^2} < \frac{\kappa_1}{\rho} \le  \frac{\psi_{1}(t_0)}{\rho} - \frac{\|x_{1}(t_0)\|}{\rho}
\end{equation*}
and, invoking the definition of~$t_0 = t_0^2$ there exists $ t_1 \in (t_0,t_0^1)$ such that for~${t \in [t_0,t_1]}$
\begin{equation} \label{eq:psi2-x-at-t0}
\begin{aligned}
%\forall\, t  \in [t_0, t_1]\,:\ & \psi(t) 
\psi(t)& - \|x_2(t)\| \le \kappa_2 < \frac{\kappa_{1}}{\rho} \le \psi(t) - \frac{\|x_{1}(t)\|}{\rho} \\
%\iff \ \forall\, t  \in [t_0,  t_1]\,:\ 
\iff &  -\|x_2(t)\| \le \kappa_2 - \psi(t) < \frac{\kappa_{1}}{\rho} - \psi(t) \le  - \frac{\|x_{1}(t)\|}{\rho} \\
%\iff \ \forall\, t  \in [t_0,   t_1]\,:\ 
\iff &  0 \le \kappa_2 - \psi(t) + \|x_2(t)\| < \frac{\kappa_{1}}{\rho} - \psi(t) + \|x_2(t)\| \le \|x_2(t)\| - \frac{\|x_{1}(t)\|}{\rho} .
\end{aligned}
\end{equation}
So we readily conclude for $t \in [t_0,  t_1]$
\begin{subequations} \label{eq:relations-2-on-t0-hatt1}
\begin{equation} \label{eq:relations-2-x-h}
\begin{aligned}
\psi(t) - \| x_2(t)\| &\le \kappa_2 , \\
\| x_2 (t) \| & \ge \psi(t) - \kappa_2  \ge \frac{\lambda}{2}, \\
h_2(t) = \frac{1}{1-\vp(t)^2 \| x_2(t)\|^2} & \ge \frac{1}{2  \vp(t) \kappa_2 } \ge \frac{\lambda}{2 \kappa_2},
\end{aligned}
\end{equation}
and analogously to~\eqref{eq:h_r-1-le} we find 
\begin{equation} \label{eq:ell=2_h1}
h_1(t) 
= \frac{1}{1- \vp_1(t)^2 \|x_1(t)\|^2}
\le \frac{1}{\vp_1(t) \kappa_1},
\end{equation}
\end{subequations}
and furthermore, using~\eqref{eq:psi2-x-at-t0} and estimation~\eqref{eq:relations-2-x-h}, we have for~$t \in [t_0,t_1]$
\begin{equation} \label{eq:x_2-x_1}
\begin{aligned}
\|x_2(t)\| - \frac{\| x_{1}(t)\|}{\rho}  &\ge \|x_2(t)\| - \psi(t) + \frac{\kappa_{1}}{\rho} \\
&\ge \psi(t) - \kappa_2 - \psi(t) + \frac{\kappa_{1}}{\rho} 
\overset{\eqref{eq:kappas}}{=} \frac{\kappa}{\rho} -  \frac{\delta \kappa}{2 \rho^2} \overset{\eqref{def:delta}}{ >} 0.
\end{aligned}
\end{equation}
Note that~\eqref{eq:x_2-x_1} implies $\|x_1(t)\| < \rho \|x_2(t)\|$ for~$t \in [t_0, t_1]$.
We consider equation~\eqref{eq:ddt-w-bounded} for~$i=2$ and~$t \in [t_0, t_1]$ and obtain
\begin{equation} \label{eq:ddt-x2}
\begin{aligned}
\ddt \tfrac{1}{2} \| x_{2}(t)\|^2 &= -h_2(t) \| x_{2}(t)\|^2 + h_{1}(t) x_{2}(t)^\top x_{1}(t) + x_{2}(t)^\top b_2(t) \\
& \le -h_2(t) \| x_{2}(t)\|^2 +  h_{1}(t) \|x_{2}(t)\| \|x_{1}(t)\| + \|x_{2}(t)\| \|b_2(t)\| \\
& \overset{\eqref{eq:x_2-x_1}}{< } -h_2(t) \| x_{2}(t)\|^2 +  h_{1}(t) \rho \|x_{2}(t)\|^2  + \|x_{2}(t)\| \|b_2(t)\| \\
& = \Big(- h_2(t) + \rho h_{1}(t) \Big) \|x_2(t)\|^2    + \|x_{2}(t)\| \|b_2(t)\|.
\end{aligned}
\end{equation}
Thanks to~\eqref{def:delta} and~\eqref{eq:kappas} we have
\begin{equation} \label{eq:kappa1-2rhokappa2}
\kappa_1 - 2\rho^2 \kappa_2 = \kappa - 2 \rho^2 \frac{ \delta \kappa}{2\rho^2} = \kappa (1-\delta) \overset{\eqref{def:delta}}{>} 0. 
\end{equation}
Hence, using the property~\ref{Ass:funnel-functions} and the relations from~\eqref{eq:relations-2-on-t0-hatt1} we obtain for~$t \in [ t_0,  t_1]$ 
\begin{equation*}
\begin{aligned}
h_{2}(t)- \rho h_{1}(t) 
& \overset{\eqref{eq:relations-2-on-t0-hatt1}}{\ge}  \frac{1}{2  \vp(t) \kappa_2} - \frac{\rho}{\vp_1(t) \kappa_1}  
\overset{\rm \ref{Ass:funnel-functions}}{=} \frac{\kappa_1 - 2 \rho^2 \kappa_2}{2 \vp(t) \kappa_1 \kappa_2}  \\
&\overset{\eqref{eq:kappas},\eqref{eq:kappa1-2rhokappa2}}{=} \frac{2 \rho^2 (1-\delta) \kappa}{2 \vp(t) \delta \kappa^2 } 
\overset{\eqref{eq:relations-2-x-h}}{\ge} \frac{1-\delta}{\delta} \frac{ \rho^2 \lambda}{ \kappa} 
\overset{\eqref{def:delta}}{>} \frac{\rho^2 \lambda}{\kappa} .
\end{aligned}
\end{equation*}
With this, using~\eqref{eq:relations-2-x-h} we estimate~\eqref{eq:ddt-x2} for~$t \in [ t_0, t_1]$
\begin{equation*}
\begin{aligned}
\ddt \tfrac{1}{2} \| x_{2}(t)\|^2 & \overset{\eqref{eq:relations-2-x-h}}{< }\left( - \frac{ \rho^2 \lambda^2}{2 \kappa} + \sup_{s \in [0,\omega)}  \|b_2(s)\| \right) \|x_2(t)\|
%\ddt \tfrac{1}{2} \| x_{2}(t)\|^2 & < \left( -\frac{\lambda \lambda_1 \rho^2}{(3r-8) \kappa} + \sup_{s \in [0,\omega)}  \|b_2(s)\| \right) \|x_2(t)\|
\overset{\eqref{eq:L}}{\le} -  L \|x_2(t)\|.
\end{aligned}
\end{equation*}
%
\begin{comment}
From this we calculate
\begin{align*}
\|x_2( t_1)\| - \|x_2( t_0)\| &= 
\int_{ t_0}^{ t_1} \frac{\big( \ddt \tfrac{1}{2}\|x_2(t)\|^2 \big)}{\|x_2(t)\|} \, \dt  \\
& < \int_{ t_0}^{ t_1} - L \, \dt 
= -L ( t_1 -  t_0)
\le \psi( t_1) - \psi( t_0)
\end{align*}
and therefore,
\[
 \kappa_2 = \psi(t_0) - \| x_2( t_0) \| \le \psi( t_1) - \|x_2( t_1)\| <  \kappa_2,
\]
a contradiction. 
\end{comment}
%
Then, similar to~\eqref{eq:contradiction} a contradiction arises.
Therefore, for all $t \in [\beta, \omega)$ we have $\psi(t) - x_2(t) \ge \kappa_2$. %$h_\ell \in \cL^{\infty}([\beta,\omega) \to \R)$. 
Moreover, for~$t \in [0,\beta)$ we have $\psi(t) - \|x_2(t)\| \ge \kappa_2$ by definition of~$ \kappa$. Hence $h_2 \in \cL^{\infty}([0,\omega) \to \R)$.
Then successively we obtain $h_i \in \cL^\infty([0,\omega) \to \R)$ for all remaining ${i \in \{1,\ldots,r-1\}\setminus\{2\}}$.

\ \\
%\subsection{$\ell\ge 3$}
If $3 \le \ell \le r-1$ we have, because~$t_0 = t_0^{\ell} < t_0^{\ell-1}$ by~\eqref{eq:t0:at-least-one-smaller}
\begin{equation*}
 \psi(t_0) - \|x_\ell(t_0)\| = \kappa_\ell < \kappa_{\ell-1} \le  \psi(t_0) - \|x_{\ell-1}(t_0)\| .
\end{equation*}
Then, by invoking the definition of~$t_0 = t_0^{\ell}$ there exists $ t_1 \in (t_0,t_0^{\ell-1})$ such that 
\begin{equation} \label{eq:psi-x-at-t0}
\forall\, t \in [t_0,t_1]\,: \ \psi(t) - \|x_\ell(t)\| \le \kappa_\ell < \kappa_{\ell-1} \le \psi(t) - \|x_{\ell-1}(t)\|.
\end{equation}
As before, we deduce for~$t \in [t_0,  t_1]$
\begin{subequations} \label{eq:relations-3-on-t0-t1}
\begin{equation} \label{eq:relations-3-x-h}
\begin{aligned}
\psi(t) - \| x_\ell(t)\| &\le \kappa_\ell , \\
\| x_\ell (t) \| & \ge \psi(t) - \kappa_\ell  \ge \frac{\lambda}{2}, \\
h_\ell(t) = \frac{1}{1-\vp(t)^2 \| x_\ell(t)\|^2} & \ge \frac{1}{2 \vp(t) \kappa_\ell }, % \ge \frac{\lambda}{2  \kappa_\ell },
\end{aligned}
\end{equation}
similar to~\eqref{eq:h_r-1-le} we obtain
\begin{equation} \label{eq:ell-ge-3-h_ell-1-le}
h_{\ell-1}(t) 
= \frac{1}{1-\vp(t)^2 \|x_{\ell-1}(t)\|^2}
\le \frac{1}{\vp(t) \kappa_{\ell-1}},
\end{equation}
\end{subequations}
and, using~\eqref{eq:psi-x-at-t0} and~\eqref{eq:relations-3-x-h}, we have for~$t \in [t_0,  t_1]$
\begin{equation}
\begin{aligned} \label{eq:x_l-x_l-1}
\|x_\ell(t)\| - \| x_{\ell-1}(t)\|  &\ge \|x_\ell(t)\| - \psi(t) + \kappa_{\ell-1}  \\ 
 &\ge \psi(t) - \kappa_\ell - \psi(t) + \kappa_{\ell-1} 
 \overset{\eqref{eq:kappas}}{=} \frac{\delta^{\ell-2}}{2\rho^2} (1-\delta) \, \kappa > 0. 
\end{aligned}
\end{equation}
Note that~\eqref{eq:x_l-x_l-1} implies $\|x_\ell(t)\| > \|x_{\ell-1}(t)\| $ for~$t \in [t_0, t_1]$.
We consider equation~\eqref{eq:ddt-w-bounded} for~$i=\ell$ and~$t \in [t_0, t_1]$ and obtain
\begin{equation} \label{eq:ddt-xl}
\begin{aligned}
\ddt \tfrac{1}{2} \| x_{\ell}(t)\|^2 &= -h_\ell(t) \| x_{\ell}(t)\|^2 + h_{\ell-1}(t) x_{\ell}(t)^\top x_{\ell-1}(t) + x_{\ell}(t)^\top b_\ell(t) \\
& \le -h_\ell(t) \| x_{\ell}(t)\|^2 +  h_{\ell-1}(t) \|x_{\ell}(t)\| \|x_{\ell-1}(t)\| + \|x_{\ell}(t)\| \|b_\ell(t)\| \\
& \overset{\eqref{eq:x_l-x_l-1}}{<} \Big(- h_\ell(t) + h_{\ell-1}(t) \Big) \|x_\ell(t)\|^2    + \|x_{\ell}(t)\| \|b_\ell(t)\|.
\end{aligned}
\end{equation}
Recording
\begin{equation} \label{eq:kappa_ell}
\begin{aligned}
\kappa_{\ell-1} - 2 \kappa_{\ell} \overset{\eqref{eq:kappas}}{=} \frac{\delta^{\ell-2} \kappa }{2\rho^2} \left( 1- 2\delta \right)  
\overset{\eqref{def:Delta}}{=} \frac{\delta^{\ell-2} \Delta}{2 \rho^2} \, \kappa > 0 ,
\end{aligned}
\end{equation}
and using~\eqref{eq:relations-3-on-t0-t1} we estimate
\begin{equation*}
\begin{aligned}
h_{\ell}(t) - h_{\ell-1}(t)& 
\overset{\eqref{eq:relations-3-on-t0-t1}}{\ge} \frac{1}{2 \vp(t) \kappa_\ell} - \frac{1}{\vp(t) \kappa_{\ell-1}}  
= \frac{\kappa_{\ell-1} - 2 \kappa_\ell}{2 \vp(t) \kappa_{\ell-1} \kappa_\ell} \\
& \overset{\eqref{eq:kappas},\eqref{eq:kappa_ell}}{=} 
\frac{1}{2 \vp(t)}
\frac{\delta^{\ell-2} \Delta \kappa}{2 \rho^2} 
\frac{4 \rho^4}{\delta^{\ell-2} \delta^{\ell-1} \kappa^2} 
\overset{\eqref{def:delta},\eqref{eq:relations-3-x-h}}{>}  2^{\ell-1} \Delta \, \frac{\rho^2 \lambda}{\kappa}  .
\end{aligned}
\end{equation*}
With this and using~\eqref{eq:relations-3-x-h} we estimate~\eqref{eq:ddt-xl} for~$t \in [ t_0, t_1]$
\begin{equation*}
\begin{aligned}
\ddt \tfrac{1}{2} \| x_{\ell}(t)\|^2 &< \left( - 2^{\ell-1} \Delta \frac{\rho^2 \lambda^2}{ \kappa}  + \sup_{s \in [0,\omega)}  \|b_\ell(s)\| \right) \|x_\ell(t)\|
%\ddt \tfrac{1}{2} \| x_{\ell}(t)\|^2 &< \left( -\frac{\lambda^2}{2 \kappa} \frac{1}{3r-4-2\ell} + \sup_{s \in [0,\omega)}  \|b_\ell(s)\| \right) \|x_\ell(t)\|
\overset{\eqref{eq:L}}{\le} -  L \|x_\ell(t)\|.
\end{aligned}
\end{equation*}
As before, a contradiction arises from analogous calculations as in~\eqref{eq:contradiction}.
Therefore, for all $t \in [\beta, \omega)$ we have $\psi(t) - \|x_\ell(t)\| \ge \kappa_\ell$.
Moreover, for~$t \in [0,\beta)$ we have ${\psi(t) - \|x_\ell(t)\| \ge \kappa_\ell}$ by definition of~$ \kappa$, and hence $h_\ell \in \cL^{\infty}([0,\omega) \to \R)$.
Then successively we obtain $h_i \in \cL^\infty([0,\omega) \to \R)$ for all remaining $i \in \{1,\ldots,r-1\}\setminus\{\ell\}$.

\ \\
\underline{Step 3}: 
We show $\omega = \infty$ and~$\tilde T \in \cT^{rm,\bar q}_\tau$.
Seeking a contradiction we assume~$\omega < \infty$. 
Then, since $h_i$ and $w_{i,j}$ for $i=1,\ldots,r-1$ and $j=1,\ldots,r$ are bounded via the previous steps, it follows that the graph of the solution of~\eqref{eq:ddt_w} is a compact subset of~$\cD$, a contradiction. Thus, ${\omega = \infty}$.
Therefore, the operator~$\tilde T$ is well defined and satisfies condition~\ref{T:BIBO} of Definition~\ref{Def:OP-T} by the previous calculations. 
Moreover, property~\ref{T:causal} of Definition~\ref{Def:OP-T} is satisfied. 
%\red{{%
Further, note that~$\tilde T$ is defined via the solution of~\eqref{eq:ddt_w}, which depends linearly on $z,\dot z, \ldots, z^{(r-1)}$, $T \in \cT^{n,q}_{\tau,1}$, 
and since $f \in \cC(\R^r \times \R^{m(r-1)r} \to \R^m)$ its integral is locally Lipschitz continuous.
Therefore, the operator~$\tilde T$ satisfies~\ref{T:Lipschitz} of Definition~\ref{Def:OP-T} and hence~$\tilde T \in \cT^{rm,\bar q}_\tau$.

\ \\
Finally, we observe that the higher derivatives of~$z$ can be calculated via an successive application of the cascade's equations~\eqref{eq:FPC-cascade} and result in
\begin{equation} \label{eq:ddt_j-z}
z^{(j)}(t) = z_{r-1,j+1} + \sum_{k=0}^{j-1}\left( \frac{\text{\normalfont{d}}}{\text{\normalfont{d}} t} \right)^k \Big[ ( a_{r-k} + p_{r-k} h_{r-1}(t)) w_{r-1,1}(t) \Big],
\end{equation}
and hence, with~$z_{r-1,r+1}(t) := \tilde \Gamma u(t)$ the results above allow us to write the conjunction of~\eqref{eq:FPC-cascade} and~\eqref{eq:System} with input~$u$ and output~$z = z_{r-1,1}$ as
\begin{equation} \label{Def:tildeF}
\begin{aligned}
z^{(r)}(t) &= \sum_{k=0}^{r-1} \left( \frac{\text{\normalfont{d}}}{\text{\normalfont{d}} t} \right)^k \Big[ ( a_{r-k} + p_{r-k} h_{r-1}(t)) w_{r-1,1}(t) \Big] + \tilde \Gamma u(t) \\
&=: \tilde F\left( \tilde d(t), \tilde T(z,\dot z,\ldots,z^{(r-1)})(t) \right) + \tilde \Gamma u(t).
\end{aligned}
\end{equation}
where $\tilde F \in \cC(\R^r \times \R^{\bar q} \to \R^m)$,  
$\tilde d(t) := (\vp(t), \dot \vp(t), \ldots , \vp^{(r-1)}(t) )^\top \in \cL^{\infty}(\rp \to \R^r)$ and $\tilde T \in \cT^{rm, \bar q}_\tau$.
Therefore, $(\tilde d, \tilde F, \tilde T, \tilde \Gamma) \in \cN^{m,r}$ 
which completes the proof.
\qed
%

%\begin{comment}

%\end{comment}
%
%\bibliography{MST}

\end{document}